\theoremstyle{plain}
\newtheorem{them}{Theorem}[section]
\newtheorem{lem}{Lemma}[section]
\theoremstyle{remark}
\newtheorem{rema}{\bf Remark}[section]
\newtheorem{exams}{\textbf{Numerical Examples}}[section]
\def\NN{\mathds{N}}
\def\QQ{\mathbb{Q}}
\begin{document}
\selectlanguage{english}

\title[The $2$-rank of the class group...]{The $2$-rank of the class group of some real cyclic quartic number fields II}
\author[A. Azizi]{Abdelmalek Azizi}
\address{Abdelmalek Azizi: Mohammed First University, Sciences Faculty, Mathematics Department, Oujda, Morocco}
\email{abdelmalekazizi@yahoo.fr}
\author[M. Tamimi]{Mohammed Tamimi}
\address{Mohammed Tamimi: Mohammed First University, Sciences Faculty, Mathematics Department, Oujda, Morocco}
\email{med.tamimi@gmail.com}
\author[A. Zekhnini]{Abdelkader Zekhnini}
\address{Abdelkader Zekhnini: Mohammed First University, Pluridisciplinary Faculty, Mathematics Department, Nador, Morocco}
\email{zekha1@yahoo.fr}

\subjclass[2000]{11R16; 11R29; 11R11; 11R80.}
\keywords{Real cyclic quartic number field, $2$-rank,  $2$-class group, quadratic fields.}

\maketitle

\selectlanguage{english}

\begin{abstract}
In this paper,  we determine  the $2$-rank of the class group of certain classes of real cyclic quartic number fields. Precisely, we consider the case in which  the quadratic subfield is $\mathbb{Q}(\sqrt{l})$ with $l=2$ or a prime congruent to $1\,\pmod8$.
\end{abstract}
\section{\bf Introduction}
Let $K$ be a number field and $H$ its  $p$-class group, that is the Sylow $p$-subgroup of the ideal class group $\mathrm{Cl}(K)$ of $K$ in the wide sense, where $p$ is a prime integer. Class groups of  number fields have been studied for a long time, and
there are many very interesting  problems concerning their behavior.
A particular quantity of interest is the rank $r_p(H)$ of the $p$-class group $H$ defined as  the number of cyclic $p$-groups appearing in the decomposition of $\mathrm{Cl}(K)$, i.e. the dimension of the $\mathbb{F}_p$-vector space $\mathrm{Cl}(K)/\mathrm{Cl}(K)^p$, where $\mathbb{F}_p$ is the field of $p$ elements.

For $p=2$, many mathematicians  are interested in determining $r_2(H)$ and the power of $2$ dividing the class number of $K$.  Hasse \cite{Ha70}, Bauer \cite{Ba71} and others gave methods for determining the exact power of $2$ dividing the class number of a quadratic numbers field. These methods were developed by C. J. Parry and his co-authors  to determine $r_2(H)$ and the  power of $2$ dividing the class number of some  cyclic quartic  numbers fields $K$ having a quadratic subfield $k$ with odd class number (e.g., \cite{Parry 77, BrPa78, Parry 75, Pa75, Pa80}). To accomplish their task,  they needed a suitable genus theory convenient to their situation. Hence they showed that,  the theory firstly developed by Hilbert (\cite{Hi94}), assuming an imaginary base field $k$, can be adapted to the situation where $K$ is a totally imaginary quartic cyclic extension of a totally real quadratic subfield  $k$. In reality, this theory can be applied to any quartic number field $K$ having a quadratic subfield $k$ of odd class number (\cite{Pa80}).

The $2$-rank of the class group of any biquadratic number field  $K$  is determined (partially or totally) in many papers   (\cite{McPaRa95, McPaRa97, Parry 77, BrPa78, AM01, AM04}) up to the case: $K$ is a real quartic cyclic extension of the rational number field $\mathbb{Q}$.
This paper is devoted  to investigate the $2$-rank of the class group of this class of number fields. We will focus on the case where the unique quadratic subfield of $K$ is $k=\mathbb{Q}(\sqrt l)$ with $l$ is a prime congruent to $1\pmod8$ or $l=2$. Note that the case  $l$ congruent to $5\pmod8$ is studied  separately.

An outline of the paper is as follows. In  \S\ \ref{1} we summarize preliminary results on quartic cyclic number fields  and the ambiguous class numbers formula.   The main theorems   are presented in \S\ \ref{2} and \S\ \ref{3}. In \S\ \ref{4} we characterize all real cyclic quartic number fields $\mathbb{K}=\mathbb{Q}(\sqrt{n\epsilon_{0}\sqrt{l}})$, with $l$ is a prime congruent to $1\pmod8$ or $l=2$,  whose   $2$-class group is trivial, cyclic, of rank $2$ or $3$.
\section*{\bf Notations}
\noindent Throughout this paper, we adopt the following notations.
\begin{enumerate}[$\bullet$]
	\item $\mathbb{Q}$: the rational field.
	\item $l$: a prime integer congruent  to $1$ modulo $8$ or $l=2$ .
	\item $k=\mathbb{Q}(\sqrt{l})$: a quadratic field.
	\item $\epsilon_{0}$: the fundamental unit of $k$.
	\item $n$: a square-free positive integer relatively prime to $l$.
	\item $\delta=1$ or $2$.
	\item $d= n\epsilon_{0}\sqrt{l}$.
	\item $\mathbb{K}=k(\sqrt{d})$: a real quartic cyclic number field.
	\item $\mathcal{O}_k$ (resp. $\mathcal{O}_\mathbb{K}$): the ring of integers of $k$ (resp. $\mathbb{K}$).
	\item $H$: the 2-class group of  $\mathbb{K}$.
	\item $r_2(H)$: the rank of $H$.
   \item $\mathfrak{2}_{i},\;i \in \{1;2\} $: the prime ideal of $k$ above $2$ if $l\equiv1\pmod8$.
	\item $\mathbb{K^{*}}$, $k^*$: the nonzero elements of the fields $\mathbb{K }$ and $k$ respectively.
	\item $N_{\mathbb{K}/k}(\mathbb{K})$: elements of $k$ which are norm from $\mathbb{K}$.
	\item $p,\; q,\; p_{i},\; q_{j}$: odd prime integers.
	\item $(\frac{x,\,y}{p})$: quadratic norm residue symbol over $k$.
	\item $[\frac{\alpha}{\beta}]$:  quadratic residue symbol for $k$.
	\item $(\frac{a}{b})$: quadratic residue (Legendre) symbol.
	\item $(\frac{a}{b})_{_{4}}$: the rational $4$-th power residue symbol.
\end{enumerate}
\section{\bf Preliminary results}\label{1}
Let $K$ be a cyclic quartic extension of the rational number field $\mathbb{Q}$. By  \cite[Theorem 1]{wi87}, it is known  that $K$ can be expressed uniquely in the form $K=\mathbb{Q}(\sqrt{a(\ell+b\sqrt{\ell})})$, where $a, b, c$ and $\ell$ are integers satisfying the conditions: $a$ is odd and square-free, $\ell= b^2+c^2$ is  square free positive and relatively prime to $a$, with $b>0$ and $c>0$. Note that $K$  possesses a unique quadratic subfield $k=\mathbb{Q}(\sqrt\ell)$. Assuming  the class number of $k$ is odd and $N_{k/\mathbb{Q}}(\epsilon_0)=-1$, where $\epsilon_{0}$ is the fundamental unit of  $k$, then one can, by \cite{Parry 90, Xianke 84}, deduce that there exist an integer $n$ such that $K=\mathbb{Q}(\sqrt{n\epsilon_0\sqrt{\ell}})$ and:
\begin{center}
	$n=
	\left \{
	\begin{array}{rl}
	2a & \text{ if }\ell\equiv1\pmod4\text{ and }b\equiv 1 \pmod2\\
	a & \text{ otherwise}.
	\end{array}
	\right.
	$\end{center}
We need also the following theorem which  gives the conductor $f_K$ of $K$.
\begin{them}[\cite{wi87}]\label{wi87}
	The  conductor $f_K$  of the $($real or imaginary$)$ cyclic quartic field $K=\mathbb{Q}(\sqrt{a(\ell+b\sqrt{\ell}})$, where $a$ is an odd square-free integer, $\ell= b^2+c^2$ is a square-free positive integer relatively prime to $a$, with $b>0$ and $c>0$, is given by  $f_K=2^{e}|a|\ell$, where   $e$  is defined by:
	$$e=
	\left \{
	\begin{array}{l}
	3,\; if\; \ell\equiv 2 \pmod8\;
	or\; \ell \equiv 1\pmod4 \;and\; b \equiv 1 \pmod2,\\
	2, \;if\; \ell \equiv 1\pmod4,\; b \equiv 0\, \pmod2,\; a+b \equiv 3\pmod4,\\
	0,\; if\; \ell \equiv 1\pmod4, \;b \equiv 0\, \pmod2,\; a+b \equiv 1\pmod4.
	\end{array}
	\right.
	$$		
\end{them}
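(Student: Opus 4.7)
The plan is to compute $f_K$ via its local factorization $f_K=\prod_p p^{e_p}$, where $p^{e_p}$ is the local conductor of a faithful character of $\mathrm{Gal}(K/\mathbb{Q})\cong \mathbb{Z}/4\mathbb{Z}$ at $p$; equivalently, by the conductor--discriminant formula, $f_K$ is the conductor of any quartic character cutting out $K$. I would work prime by prime, first confining the ramification. Taking the generator $\alpha=\sqrt{a(\ell+b\sqrt{\ell})}$, the relation $\alpha^{2}-a\ell=ab\sqrt{\ell}$ yields the minimal polynomial $X^{4}-2a\ell X^{2}+a^{2}\ell c^{2}$, whose discriminant is supported on $2a\ell$. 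Hence every prime $p\nmid 2a\ell$ is unramified and contributes $e_p=0$.

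For odd primes $p\mid a\ell$ the ramification is tame and $e_p=1$. If $p\mid a$, then $p\nmid\ell$, so $k=\mathbb{Q}(\sqrt{\ell})$ is unramified at $p$ and the ramification of $K/k$ is generated by $\sqrt{a}$, which is tame; this gives $e_p=1$. If $p\mid\ell$ is odd, then $k/\mathbb{Q}$ is tamely ramified at $p$ and a standard local check shows that $K/k$ is unramified above $p$; again $e_p=1$. Together these account for the factor $|a|\ell$ in the formula and reduce the whole problem to pinning down the $2$-adic exponent $e=e_2$.

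The crux is the $2$-adic analysis, which splits into the three cases of the statement. When $\ell\equiv 2\pmod{8}$, both $b$ and $c$ are odd, $k$ is already wildly ramified at $2$, and a Newton-polygon computation on $X^{4}-2a\ell X^{2}+a^{2}\ell c^{2}$ over $\mathbb{Q}_2$ shows that the completion $K_2/\mathbb{Q}_2$ is totally ramified with conductor exponent $3$. The same exponent appears when $\ell\equiv 1\pmod{4}$ with $b$ odd: then $c$ is even, any prime $\mathfrak{p}\mid 2$ of $k$ satisfies $v_\mathfrak{p}(\ell+b\sqrt{\ell})\geq 1$, and extracting the square root of a uniformizer forces wild ramification in $K/k$, producing $e=3$. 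When $\ell\equiv 1\pmod{4}$ and $b$ is even (so $c$ is odd), $k/\mathbb{Q}$ is unramified at $2$ and $2$ splits in $k$; at each $\mathfrak{p}\mid 2$, the element $a(\ell+b\sqrt{\ell})$ is a local unit whose class in $k_\mathfrak{p}^{\times}/(k_\mathfrak{p}^{\times})^{2}$ is controlled by $a+b\pmod{4}$. Using the classification of quadratic extensions of $\mathbb{Q}_2$, the case $a+b\equiv 1\pmod{4}$ yields a square and $e=0$, while $a+b\equiv 3\pmod{4}$ produces the extension $\mathbb{Q}_2(i)$ locally and hence $e=2$.

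The main obstacle I anticipate is this last case: one must compute the class of $a(\ell+b\sqrt{\ell})$ modulo squares at both primes of $k$ above $2$ and verify that the congruence $a+b\pmod{4}$ controls the local behavior \emph{simultaneously} at both places, so that $K$ is globally a cyclic quartic field with the stated conductor and is not merely a biquadratic bicyclic one. Once the local exponents are established, assembling them via the conductor--discriminant formula yields $f_K=2^{e}|a|\ell$ as claimed.
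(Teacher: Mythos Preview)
The paper does not give a proof of this theorem: it is quoted verbatim from \cite{wi87} (Hardy--Hudson--Richman--Williams--Holtz) and used as a black box throughout. So there is no argument in the paper to compare your proposal against.

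That said, your sketch has two concrete slips worth fixing. First, your claim that for odd $p\mid\ell$ the extension $K/k$ is \emph{unramified} above $p$ is false. Writing $a(\ell+b\sqrt{\ell})=a\sqrt{\ell}\,(b+\sqrt{\ell})$ and noting that $p\nmid b$ (since $\ell=b^2+c^2$ is squarefree), one finds $v_{\mathfrak{p}}\bigl(a(\ell+b\sqrt{\ell})\bigr)=1$ at the ramified prime $\mathfrak{p}\mid p$ of $k$, so $K/k$ is ramified there and $e_p(K/\mathbb{Q})=4$; the present paper in fact uses $e_l=4$ explicitly. Your conclusion $e_p=1$ survives only because the ramification is tame, not for the reason you give. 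Second, in the case $\ell\equiv 1\pmod{4}$ with $b$ even you assert that $2$ splits in $k$; this holds only for $\ell\equiv 1\pmod{8}$, while for $\ell\equiv 5\pmod{8}$ the prime $2$ is inert and the local field is the unramified quadratic extension of $\mathbb{Q}_2$. Your square-class analysis at $2$ therefore needs a separate (and slightly different) treatment in the inert case before the dichotomy on $a+b\pmod{4}$ can be established uniformly.
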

\noindent Recall that the extensions $K/\mathbb{Q}$ were  investigated  by  Hasse in a paper (\cite{Hasse 48})  prior to that of Leopoldt (\cite{Leo}) on  the arithmetic interpretation of the class number  of real abelian fields. They were also investigated  by M. N. Gras \cite[...]{Gras-M-N 80-81, Gr77, Gr79} and others. By \cite{Hasse 48}, the field $K$ can be real as it can be imaginary. Precisely, we have the following result that specify the number of real (resp. imaginary) cyclic quartic fields sharing the same conductor and the same quadratic subfield.
\begin{lem}[\cite{Hasse 48}]\label{6}
	For a given  square-free positive integer $\ell=p_{0}p_{1}\dots p_{m}$, where $p_i$ is a prime integer for all $i$, and for  a given conductor $f_K$, the number of real $($resp. imaginary$)$ cyclic quartic fields $K$ having the same  conductor  $f_K$ and the same quadratic subfield $k=\mathbb{Q}(\sqrt\ell)$ is equal to $2^{m}$ if $f_K\equiv 0\,\pmod8$. But if $f_K\not \equiv 0\,\pmod8$, then the number of real $($resp. imaginary$)$ cyclic quartic fields  $K$ with the conductor  $f_K$ and the same quadratic subfield $k=\mathbb{Q}(\sqrt\ell)$ is equal to $2^{m}$ or $0\ ($resp. $0$ or $2^{m})$.
	Moreover, a cyclic quartic field $K$  is real if and only if  $S=\prod_{p|f_K}s_{p}=+1$, where $s_{2}=-1$, $s_p=(-1)^{\frac{p-1}{e_{p}}}$ for odd prime integer  $p$ with $e_{p}$ is the ramification index of $p$ in  $K$.
\end{lem}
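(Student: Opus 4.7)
The plan is to reformulate the statement via primitive Dirichlet characters using the Kronecker--Weber theorem, and then carry out a local count. A cyclic quartic extension $K/\mathbb{Q}$ of conductor $f_K$ corresponds bijectively to an unordered pair $\{\chi,\chi^{-1}\}$ of primitive Dirichlet characters modulo $f_K$ of order exactly $4$. Such a $K$ contains $k=\mathbb{Q}(\sqrt{\ell})$ if and only if $\chi^{2}=\chi_{\ell}$, where $\chi_{\ell}$ is the Kronecker symbol cutting out $k$, and $K$ is totally real if and only if $\chi(-1)=+1$. Hence the lemma reduces to enumerating such characters and determining $\chi(-1)$.

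First I would decompose $\chi=\prod_{p\mid f_K}\chi_p$ via the Chinese Remainder Theorem, turning $\chi^{2}=\chi_{\ell}$ into local identities $\chi_p^{2}=(\chi_{\ell})_p$. For each odd prime $p_i\mid\ell$, $(\chi_{\ell})_{p_i}$ is the Legendre symbol modulo $p_i$ and admits exactly two square roots in $\widehat{(\mathbb{Z}/p_i)^{*}}$, each primitive of order $4$ (which forces $p_i\equiv 1\pmod 4$ when a solution exists). For an odd prime $q\mid f_K$ with $q\nmid\ell$, primitivity together with $\chi_q^{2}=1$ forces $\chi_q$ to be the unique quadratic character modulo $q$. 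At the prime $2$, Theorem~\ref{wi87} restricts $v_2(f_K)$ to $\{0,2,3\}$: when $v_2(f_K)=3$ there are exactly two admissible primitive characters modulo $8$, of opposite signs at $-1$; when $v_2(f_K)\in\{0,2\}$ the character $\chi_2$ is uniquely determined (trivial or the unique character modulo $4$), and its sign at $-1$ is rigid.

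Multiplying the local counts gives $2^{m+1}$ characters when $v_2(f_K)\in\{0,2\}$ and $2^{m+2}$ characters when $v_2(f_K)=3$; passing to unordered pairs $\{\chi,\chi^{-1}\}$ gives $2^m$ and $2^{m+1}$ fields respectively. The reality dichotomy then follows from $\chi(-1)=\prod_{p\mid f_K}\chi_p(-1)$: when $f_K\equiv 0\pmod 8$ the two choices of $\chi_2$ produce opposite signs, splitting the $2^{m+1}$ fields into $2^m$ real and $2^m$ imaginary; when $f_K\not\equiv 0\pmod 8$ the sign $\chi_2(-1)$ is forced and the $2^m$ fields share a common signature determined by $\prod_p\chi_p(-1)$. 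Writing $\chi_p(-1)=(-1)^{(p-1)/\mathrm{ord}(\chi_p)}$ for odd $p$ and identifying $\mathrm{ord}(\chi_p)$ with the ramification index $e_p$ of $p$ in $K$ converts this global sign into $S=\prod_{p\mid f_K}s_p$, with the factor $s_2=-1$ extracted from the $2$-adic contribution.

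The main obstacle is the local analysis at $p=2$: one must verify, in each of the three regimes of Theorem~\ref{wi87}, that the prescribed $\chi_2$ matches the local behaviour of $\chi_{\ell}$ at $2$ and that the count of admissible $\chi_2$ is correct, distinguishing $\ell\equiv 1\pmod 4$ (with the parity of $b$ selecting between $v_2(f_K)=2$ and $v_2(f_K)=3$) from $\ell\equiv 2\pmod 8$. Once this bookkeeping is settled, both the global count and the reality criterion assemble mechanically from the local data by a direct multiplication.
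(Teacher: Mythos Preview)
The paper does not give its own proof of this lemma: it is quoted from Hasse \cite{Hasse 48} and used as a black box throughout (see how it is invoked, e.g., in the proofs of Theorems~3.1--3.8). There is therefore no argument in the paper to compare your proposal against.

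Your approach via primitive order-$4$ Dirichlet characters and the Kronecker--Weber correspondence is the standard one and is essentially correct. One point to tighten: you assert $v_2(f_K)\in\{0,2,3\}$ by appealing to Theorem~\ref{wi87}, but that theorem gives $f_K=2^{e}\lvert a\rvert\ell$ with $e\in\{0,2,3\}$; when $\ell\equiv 2\pmod 8$ (so $2\mid\ell$) this yields $v_2(f_K)=e+1=4$, and indeed in that case the local square-root $\chi_2$ of $(\chi_\ell)_2$ must live modulo~$16$ since $(\mathbb{Z}/8\mathbb{Z})^{*}$ has exponent $2$. This does not affect the count (there are still exactly two admissible $\chi_2$, of opposite signs at $-1$, so the $8\mid f_K$ conclusion goes through), but your case analysis at $p=2$ should include it. You should also make explicit that the criterion $S=\prod_{p\mid f_K}s_p=+1$ is only decisive when $f_K\not\equiv 0\pmod 8$; when $8\mid f_K$ the two choices of $\chi_2$ give both signs, which is exactly why the lemma asserts that real and imaginary fields occur in equal numbers in that regime (and this is how the paper uses the lemma: it checks $S$ only in the $f_K\not\equiv 0\pmod 8$ cases and otherwise simply invokes the equal split).
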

\begin{rema}
	Keeping  notations above, $K$ is then real if and only if $a> 0$ (equivalently $n>0$).
\end{rema}
\noindent The field $K$ also satisfies the following lemma.
\begin{lem}[\cite{Zink 66-67}]\label{lem: Zink 66-67}
	Let  $a, b$ and $c$ be positive integers,  $\ell= b^{2} + c^{2}$, with $a$ and $c$  odd, then $\mathbb{Q}(\sqrt{2a(\ell+b\sqrt{\ell}})=\mathbb{Q}(\sqrt{a(\ell+c\sqrt{\ell}})$.
\end{lem}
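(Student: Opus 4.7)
The strategy is to exhibit an explicit element of $k^*=\mathbb{Q}(\sqrt{\ell})^*$ whose square carries the radicand $2a(\ell+b\sqrt{\ell})$ to $a(\ell+c\sqrt{\ell})$. Since both numbers generate quartic extensions of $\mathbb{Q}$ containing the common quadratic subfield $k$, the equality of the two fields $\mathbb{Q}(\sqrt{2a(\ell+b\sqrt{\ell})})$ and $\mathbb{Q}(\sqrt{a(\ell+c\sqrt{\ell})})$ is equivalent to the quotient $2(\ell+b\sqrt{\ell})/(\ell+c\sqrt{\ell})$ being a square in $k^*$.

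The first simplification I would use is the factorisation $\ell+b\sqrt{\ell}=\sqrt{\ell}\,(\sqrt{\ell}+b)$ and, similarly, $\ell+c\sqrt{\ell}=\sqrt{\ell}\,(\sqrt{\ell}+c)$. This cancels the $\sqrt{\ell}$ factor and reduces the problem to showing that
\[
\frac{2(\sqrt{\ell}+b)}{\sqrt{\ell}+c}\in (k^*)^{2}.
\]
The key observation, and the only real content of the lemma, is the identity
\[
(\sqrt{\ell}+b+c)^{2}=2(\sqrt{\ell}+b)(\sqrt{\ell}+c),
\]
which one checks by expanding the left side to $\ell+(b+c)^{2}+2(b+c)\sqrt{\ell}$ and invoking $\ell=b^{2}+c^{2}$ to rewrite $\ell+(b+c)^{2}=2(b^{2}+bc+c^{2})$, matching the right side $2\bigl(b^{2}+bc+c^{2}+(b+c)\sqrt{\ell}\bigr)$. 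Dividing by $(\sqrt{\ell}+c)^{2}$ then gives
\[
\frac{2(\sqrt{\ell}+b)}{\sqrt{\ell}+c}=\left(\frac{\sqrt{\ell}+b+c}{\sqrt{\ell}+c}\right)^{2}\in (k^*)^{2}.
\]

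Consequently, multiplying through by $a\in\mathbb{Q}^*$,
\[
2a(\ell+b\sqrt{\ell})=a(\ell+c\sqrt{\ell})\cdot\left(\frac{\sqrt{\ell}+b+c}{\sqrt{\ell}+c}\right)^{2},
\]
so the two radicands differ by the square of an element of $k^*$, and therefore generate the same quartic extension of $\mathbb{Q}$. There is no real obstacle here: once the identity $(\sqrt{\ell}+b+c)^{2}=2(\sqrt{\ell}+b)(\sqrt{\ell}+c)$ is spotted, the remainder is bookkeeping. The odd parity assumptions on $a$ and $c$ play no role in this algebraic equality — they enter only to ensure that each side is written in Williams' canonical form of Theorem \ref{wi87}, so that the lemma is genuinely an identification of two \emph{a priori} distinct canonical representations.
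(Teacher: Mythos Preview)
Your argument is correct. The identity $(\sqrt{\ell}+b+c)^{2}=2(\sqrt{\ell}+b)(\sqrt{\ell}+c)$ under the hypothesis $\ell=b^{2}+c^{2}$ is exactly what is needed, and your derivation of it is clean. Note, however, that the paper does not supply its own proof of this lemma: it is simply quoted from Zink \cite{Zink 66-67}, so there is no in-text argument to compare against. Your proof is a valid self-contained justification of the cited result.
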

\noindent We end this section by  recalling  the numbers of ambiguous ideal classes of a quadratic extension $K/k$. This result will allow us to investigate the $2$-rank of the class group of $K$.
\begin{them}[\cite{Azizi 04, Parry 75}]\label{18}
	Let   $K/k$ be a cyclic extension of prime degree $p$. Denote by $A_{K/k}$ the number of ambiguous ideal classes of $K$ with respect to $k$, then:  $$ A_{K/k} = h(k) p^{\mu+r^{*}-(r+c+1)}$$
	\noindent with:\\
	$r$ the number of fundamental units of  $k$,\\
	$\mu$ the number of prime ideals in  $k$ $($finite or infinite$)$ which ramify in  $K$,\\
	$p^{r^{*}}=[N_{K/k}(K^{*})\cap E_{k}:E_{k}^p]$ with $E_{k}$ is the group of units of  $k$,\\
	$c=1$  if  $k$ contains  a primitive  $p$-th root of unity and  $c=0$ otherwise.\\
	Furthermore, for  $p=2$ and if the class number of $k$ is  odd, then the  $2$-rank of the class group of $K$ is
	$$\mu+r^{*}-(r+c+1).$$
\end{them}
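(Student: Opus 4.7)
The plan is to derive Chevalley's ambiguous class number formula in the cyclic prime-degree setting and then read off the rank statement when $p=2$. Write $G = \mathrm{Gal}(K/k) = \langle \sigma \rangle$, $C = \mathrm{Cl}(K)$, so that $A_{K/k} = C^G$. I would first establish the general identity
\[
|C^G| = \frac{h(k)\,\prod_v e_v}{[K:k]\cdot [E_k : E_k \cap N_{K/k}(K^*)]},
\]
where $v$ ranges over all places of $k$ (finite and archimedean) with ramification indices $e_v$. Starting from the short exact sequences of $G$-modules $1 \to E_K \to K^* \to P_K \to 1$ and $1 \to P_K \to I_K \to C \to 1$, one invokes Hilbert's Theorem 90 in the form $\hat{H}^{-1}(G, K^*) = 1$ together with the periodicity of Tate cohomology for cyclic groups. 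The ambiguous ideals are controlled by $\prod_v e_v$, the principal ambiguous ideals modulo extensions from $k$ contribute the factor $[E_k : E_k \cap N_{K/k}(K^*)]$, and the factor $[K:k]$ in the denominator comes from the index of norms in $\hat{H}^0$ of a cyclic module.

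Second, I would simplify the unit index. For any $u \in E_k$, $N_{K/k}(u) = u^{[K:k]} = u^p$, so $E_k^p \subseteq E_k \cap N_{K/k}(K^*)$. Setting $p^{r^*} = [E_k \cap N_{K/k}(K^*) : E_k^p]$ and invoking Dirichlet's unit theorem, $E_k \cong \mu(k) \times \mathbb{Z}^r$, hence $[E_k : E_k^p] = p^{r+c}$ with $c = 1$ exactly when $k$ contains a primitive $p$-th root of unity. Therefore $[E_k : E_k \cap N_{K/k}(K^*)] = p^{r+c-r^*}$, and since $\prod_v e_v = p^\mu$ (every ramified place contributes a factor $p$), substitution yields $|A_{K/k}| = h(k)\,p^{\mu + r^* - (r + c + 1)}$.

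Third, for $p=2$ and $h(k)$ odd, I would obtain the rank statement via the crucial observation that $\sigma$ acts as inversion on the 2-Sylow subgroup $C_2$ of $C$. Indeed, for any $c \in C$ one has $(1+\sigma)c = j(N_{K/k}(c))$, where $j\colon \mathrm{Cl}(k) \to C$ is the extension-of-ideals map. For $c \in C_2$, the right-hand side lies in $j(\mathrm{Cl}(k))$, a group of odd order, while $(1+\sigma)c \in C_2$; thus $(1+\sigma)c = 1$, i.e.\ $\sigma(c) = c^{-1}$. Consequently $C_2^G = C_2[2]$, an elementary abelian $2$-group of $\mathbb{F}_2$-dimension $r_2(H)$. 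Since $h(k)$ is odd, the 2-part of $A_{K/k} = C^G = C_2^G \oplus C_{\mathrm{odd}}^G$ is exactly $C_2^G$, so matching the $2$-part of the Chevalley formula gives $r_2(H) = \mu + r^* - (r + c + 1)$.

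The main obstacle lies in the first step: the cohomological derivation must be carried out cleanly, and in particular care is needed with archimedean ramification (so that the formula remains valid when $K/k$ is complex over a real base, as occurs in the applications) and with the Herbrand quotient of $E_K$ as a $G$-module. Once this is in place, the unit-index reduction and the $\sigma = -1$ trick on $C_2$ are comparatively routine.
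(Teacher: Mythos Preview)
The paper does not supply its own proof of this theorem; it is quoted from the literature (the citations to Azizi and Parry) as the ambiguous class number formula, which is essentially Chevalley's formula specialized to prime degree together with its standard rank consequence. Your sketch is the classical and correct route: derive Chevalley's identity cohomologically from Hilbert~90 and the Herbrand quotient, rewrite the unit-norm index via Dirichlet's theorem as $p^{r+c-r^*}$, and for $p=2$ with $h(k)$ odd use the observation that $\sigma$ acts by inversion on the $2$-Sylow of $\mathrm{Cl}(K)$ (since $c^{1+\sigma}=j(N_{K/k}c)$ lands in the image of the odd group $\mathrm{Cl}(k)$), whence $C_2^G=C_2[2]$ has order $2^{r_2(H)}$. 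All three steps are sound as outlined; the only place demanding genuine care, as you flag, is the Herbrand-quotient computation for $E_K$ with the archimedean places included, but this is standard.
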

\begin{rema}\label{17}
	Since  $-1$ and $\epsilon_{0}$ generate the unit group  of the quadratic field $k$, so
	\begin{enumerate}[$\bullet$]
		\item $r^*=0$,  if $-1, \epsilon_{0} \;\text{and}\; -\epsilon_{0}$ are not in $N_{K/k}(K^{*})$.
		\item $r^*=1$, if $(-1$ is  in $N_{K/k}(K^{*})$  and    $\epsilon_{0}$  is not) or  $(-1$ is not   in $N_{K/k}(K^{*})$ and  $\epsilon_{0}$ or $-\epsilon_{0}$ is).
		\item $r^*=2$, if  $-1$ and $\epsilon_{0}$ are  in  $N_{K/k}(K^{*})$.
	\end{enumerate}
\end{rema}
 To compute $r_2(H)$, the rank of the $2$-class group $H$ of  $K$, we will
distinguish many cases. For this, let $p_{1}$, $p_2$, $\cdots$, $p_{t}$, $q_{1}$, $\cdots$, $q_{s}$ be  positive prime integers. Put $\delta=1$ or $2$.
 \section{\bf The case $l\equiv1 \pmod8$}\label{2}
 Let  $l$ be  a prime integer congruent  to $1\pmod8$ and  $n$ a square free positive integer relatively prime to $l$.  Let  $\mathbb{K}=k(\sqrt{n\epsilon_{0}\sqrt{l}})$ and $k=\mathbb{Q}(\sqrt{l})$, where  $\epsilon_{0}$ is the fundamental unit of $k$.
 \begin{rema}  As $n$ is relatively prime to $l$, so the prime integers dividing  $n$  don't ramify in  $k$, and   $2$ splits in  $k$ since $l\equiv 1\,\pmod8$. Moreover,   $e_{l}$, the ramification index of $l$ in $\mathbb{K}$, is $4$. Thus $s_{l}=(-1)^{\frac{l-1}{4}}= 1$.
 \end{rema}
 \begin{rema}\label{5}
	In what follows, we consider  $l=b^2+c^2$ with  $b$ and $c$  two positive integers and $c$ odd.
	As $l\equiv1\pmod8$, so $b\equiv0\pmod4$ (e.g., \cite[page 2]{Cohn 85}). Recall, as mentioned in the beginning of section \ref{1}, that there exists an odd  square-free integer $a$ relatively prime to $l$ such that $\mathbb{K}=\mathbb{Q}(\sqrt{a(l+b\sqrt{l})})$ with
	    \begin{center}
		$a=
		\left \{
		\begin{array}{rl}
		\frac{n}{2} & \text{ if }l\equiv1\pmod4\text{ and }b\equiv 1 \pmod2,\\
		n & \text{ otherwise}.
		\end{array}
		\right.
		$\end{center}
\end{rema}
We also need the following lemma.
\begin{lem}[\cite{AzTa}]\label{20}
  Let   $l$ be  a prime integer congruent  to $1\pmod8$ and $\epsilon_{0}$ the fundamental unit of $k=\mathbb{Q}(\sqrt{l})$. Then $\epsilon_{0}\sqrt l\equiv1\pmod4$ in $k$.
\end{lem}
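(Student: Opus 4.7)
The plan is to replace $\epsilon_{0}$ by an explicit $\mathbb{Z}$-basis representation $a+b\sqrt{l}$, pin down the residues of $a$ and $b$ modulo $4$, and then verify the congruence directly in $\mathcal{O}_k$.

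First, since $l\equiv 1\pmod 4$ is prime, genus theory forces $h(k)$ to be odd, and the minimal solution of the Pell equation $X^2-lY^2=-1$ exists, giving $N_{k/\mathbb{Q}}(\epsilon_{0})=-1$. Writing $\epsilon_{0}=\frac{a+b\sqrt l}{2}$ with $a,b\in\mathbb{Z}$ of the same parity and $a^2-lb^2=-4$, I reduce modulo $8$: if $a,b$ were both odd, the relation $a^2-lb^2\equiv 1-l\equiv -4\pmod 8$ would force $l\equiv 5\pmod 8$, contradicting the hypothesis. So $a$ and $b$ are both even, and $\epsilon_{0}=a'+b'\sqrt{l}\in\mathbb{Z}[\sqrt{l}]$ with $a'^{2}-lb'^{2}=-1$. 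Reducing $a'^{2}+1=lb'^{2}$ modulo $2$ shows $b'$ is odd, after which $b'^{2}\equiv 1\pmod 8$ and $l\equiv 1\pmod 8$ yield $a'^{2}\equiv 0\pmod 8$, hence $a'\equiv 0\pmod 4$.

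Next, I compute $\epsilon_{0}\sqrt{l}=b'l+a'\sqrt{l}$ and use the explicit description of $4\mathcal{O}_k$: since $\mathcal{O}_k=\mathbb{Z}[(1+\sqrt{l})/2]$, an element $x+y\sqrt{l}$ with $x,y\in\mathbb{Z}$ lies in $4\mathcal{O}_k$ if and only if $y$ is even and $x\equiv y\pmod 4$. Applied to $\epsilon_{0}\sqrt{l}-1=(b'l-1)+a'\sqrt{l}$, and using $a'\equiv 0\pmod 4$ together with $l\equiv 1\pmod 4$, this reduces the lemma to the single congruence $b'\equiv 1\pmod 4$.

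This last congruence is the main obstacle. The equation $a'^{2}+1=lb'^{2}$ together with $a'\equiv 0\pmod 4$ and $l\equiv 1\pmod 8$ only yields $lb'^{2}\equiv 1\pmod{16}$, which leaves both $b'\equiv 1$ and $b'\equiv 3\pmod 4$ open a priori. To rule out the second case one must exploit that $\epsilon_{0}$ is the \emph{fundamental} unit (not merely a unit of norm $-1$): the four associates $\pm\epsilon_{0}^{\pm 1}$ exhaust the sign choices $(\pm a',\pm b')$, and the convention $\epsilon_{0}>1$ uniquely singles out $a',b'>0$. A finer $2$-adic analysis---for example relating $\epsilon_{0}\pmod 8$ to the rational biquadratic symbol $\left(\tfrac{2}{l}\right)_{4}$---is then required to pin $b'$ into the residue class $1\pmod 4$. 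This last step is the content of \cite{AzTa}, which I would cite to close the argument.
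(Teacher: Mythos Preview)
The paper itself gives no proof of this lemma: it is stated with a bare citation to \cite{AzTa} and used as a black box. So there is no ``paper's own proof'' to compare your argument against.

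Your reduction is correct and well organised. Writing $\epsilon_{0}=a'+b'\sqrt{l}\in\mathbb{Z}[\sqrt{l}]$ with $a'^{2}-lb'^{2}=-1$, you correctly derive $a'\equiv 0\pmod 4$ and $b'$ odd from $l\equiv 1\pmod 8$, and you correctly identify that $\epsilon_{0}\sqrt{l}\equiv 1\pmod{4\mathcal{O}_k}$ is equivalent to $b'\equiv 1\pmod 4$. You are also right that this last congruence does \emph{not} follow from the norm equation modulo any power of $2$: the relation $a'^{2}+1=lb'^{2}$ with $a'\equiv 0\pmod 4$ only gives $lb'^{2}\equiv 1\pmod{16}$, which is compatible with either residue of $b'\pmod 4$. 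Pinning down $b'\equiv 1\pmod 4$ genuinely uses that $\epsilon_{0}$ is the fundamental unit with $\epsilon_{0}>1$ (equivalently $a',b'>0$), and the argument for this is precisely what \cite{AzTa} supplies.

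In short, your proposal is not an independent proof but an explicit unpacking of the lemma down to the one nontrivial input, followed by the same citation the paper makes. That is a reasonable thing to write, and it is honest about where the real content lies; just be aware that you have not proved more than the paper claims to prove, since both ultimately rest on \cite{AzTa}.
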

 \subsection{Case  $n = 1$ }
 \begin{them}\label{}
 	Let $\mathbb{K}=\mathbb{Q}(\sqrt{n\epsilon_{0}\sqrt{l}})$ be a real cyclic quartic number field, where $l\equiv1\pmod8$ is a positive prime integer, $n$ a square-free positive  integer relatively prime to $l$ and  $\epsilon_{0}$ the fundamental unit of  $k=\mathbb{Q}(\sqrt{l})$.   If $n=1$, then $r_2\left(H\right)= 0$.
 \end{them}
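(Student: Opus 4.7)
The plan is to apply Theorem \ref{18} to the quadratic extension $\mathbb{K}/k$. Since $l\equiv 1\pmod 8$ is prime, the discriminant of $k=\mathbb{Q}(\sqrt{l})$ has only one prime divisor, so by genus theory $h(k)$ is odd, and the theorem applies with $r=1$, $c=1$, giving
$$r_2(H)\,=\,\mu+r^*-3.$$
Hence it is enough to prove $\mu=1$ and $r^*=2$.

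For $\mu$: with $d=\epsilon_{0}\sqrt{l}$, the prime $\mathfrak{l}:=(\sqrt{l})$ of $k$ ramifies in $\mathbb{K}/k=k(\sqrt d)$ because $v_{\mathfrak{l}}(d)=1$ is odd; at any other finite prime of odd residue characteristic, $d$ is a unit, so the extension is unramified there. By Lemma \ref{20}, $d\equiv 1\pmod 4$ in $\mathcal{O}_k$, which shows the two primes $\mathfrak{2}_{1},\mathfrak{2}_{2}$ above $2$ are also unramified. Finally $\mathbb{K}$ is totally real, so no infinite place ramifies. Thus $\mu=1$.

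For $r^*=2$, I plan to show via Hasse's norm theorem that both $-1$ and $\epsilon_{0}$ lie in $N_{\mathbb{K}/k}(\mathbb{K}^*)$, by verifying that $(\alpha,d)_v=1$ at every place $v$ of $k$ for $\alpha\in\{-1,\epsilon_{0}\}$. At the real places, $d>0$ under both embeddings (from $N_{k/\mathbb{Q}}(\epsilon_{0})=-1$ one has $\epsilon_{0}'<0$, hence $\epsilon_{0}'(-\sqrt{l})>0$); at $\mathfrak{2}_{i}$, the extension is unramified and $\alpha$ is a unit; at any finite prime not above $2l$, both entries are units at odd residue characteristic. The crux is $\mathfrak{l}$: by bilinearity $(\alpha,d)_{\mathfrak{l}}=(\alpha,\sqrt{l})_{\mathfrak{l}}(\alpha,\epsilon_{0})_{\mathfrak{l}}$, the second factor equals $1$ (two units, odd residue characteristic), and the first is the Legendre symbol $\left(\frac{\overline{\alpha}}{l}\right)$, where $\overline{\alpha}\in\FF_l$ is the reduction mod $\mathfrak{l}$. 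For $\alpha=-1$ this equals $1$ since $l\equiv 1\pmod 4$. For $\alpha=\epsilon_{0}$, writing $\epsilon_{0}=(T+U\sqrt{l})/2$ with $T^2-lU^2=-4$ and reducing modulo $\mathfrak{l}$ gives $\overline{\epsilon_{0}}^{\,2}\equiv -1\pmod l$; consequently $\overline{\epsilon_{0}}$ has order $4$ in $\FF_l^*$ and is a quadratic residue iff $8\mid l-1$, which is precisely our standing hypothesis. This is the main obstacle, and it is resolved exactly by the assumption $l\equiv 1\pmod 8$. Combining everything yields $r^*=2$ and therefore $r_2(H)=0$.
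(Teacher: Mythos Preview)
Your proof is correct and follows essentially the same strategy as the paper: apply the ambiguous class number formula (Theorem \ref{18}) to $\mathbb{K}/k$, show $\mu=1$, and show $r^*=2$ by checking that the norm residue symbols $\left(\frac{-1,\,d}{(\sqrt l)}\right)$ and $\left(\frac{\epsilon_0,\,d}{(\sqrt l)}\right)$ are both trivial. Your handling of the second symbol via $\overline{\epsilon_0}^{\,2}\equiv -1\pmod l$ (hence $\overline{\epsilon_0}$ has order $4$ and is a square precisely when $8\mid l-1$) is a minor variant of the paper's argument, which writes $\epsilon_0=u+v\sqrt l$ and uses $v^2l\equiv 1\pmod u$ together with reciprocity to obtain $\left(\frac{u}{l}\right)=1$; your version has the advantage of making the role of the hypothesis $l\equiv 1\pmod 8$ explicit.
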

 \begin{proof} Since $a=n=1$,  $a+b \equiv 1+0\equiv 1 \pmod4$,  which implies, by Theorem \ref{wi87},  that $f_\mathbb{K} = l\not\equiv 0 \pmod8$. But $S =s_{l}= +1$,  then  Lemma \ref{6} ensures the existence of real number field $\mathbb{K}$ having as conductor $f_\mathbb{K}$ and as quadratic subfield $k$.
 Moreover, the only prime ideal of $k$ that ramifies in $\mathbb{K}$ is $(\sqrt{l})$, i.e. $\mu=1$.  To prove the theorem, we have to compute the integer $r^*$ (see Theorem \ref{18}) by applying Remark \ref{17}, and then we call the theorem \ref{18}.
 We have
 $$\begin{array}{ll}
 		\left(\frac{-1, \,  d}{\left(\sqrt{l}\right)}\right)=\left[\frac{-1}{\left(\sqrt{l}\right)}\right]=\left(\frac{-1}{l}\right)=1. \\
 		\left(\frac{\epsilon_{0}, \,  d}{(\sqrt{l})}\right)=\left[\frac{\epsilon_{0}}{(\sqrt{l})}\right]=\left[\frac{u}{(\sqrt{l})}\right]=\left(\frac{u}{l}\right)=\left(\frac{l}{u}\right)=\left(\frac{v^{2}l}{u}\right)=1,
 		\end{array}$$
indeed $\epsilon_{0}=u+v\sqrt{l}$, so  $-1=u^{2}-v^{2}l$ and thus $v^{2}l\equiv1\pmod u.$
 Hence $r^{*}=2$,  which implies that:   $$r_2\left(H\right) = \mu + r^{*} -3 =1+2-3=0.$$
 \end{proof}
 \subsection{Case  $n =2$}
 \begin{them}\label{}
 	Let $\mathbb{K}=\mathbb{Q}(\sqrt{n\epsilon_{0}\sqrt{l}})$ be a real cyclic quartic number field, where $l\equiv1\pmod8$ is a positive prime integer, $n$ a square-free  positive  integer relatively prime to $l$ and  $\epsilon_{0}$ the fundamental unit of  $k=\mathbb{Q}(\sqrt{l})$.     For $n= 2$, we have:
 	\begin{enumerate}[\rm1.]
 		\item If $\left( \frac{2}{l}\right) _{4}=(-1)^{\frac{l-1}{8}}$, then   $r_2\left(H\right) =2.$
 		\item If $\left( \frac{2}{l}\right) _{4}\neq (-1)^{\frac{l-1}{8}}$, then       $r_2\left(H\right) =  1.$
 	\end{enumerate}
 \end{them}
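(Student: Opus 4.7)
The strategy is to apply the ambiguous class number formula of Theorem \ref{18} together with Remark \ref{17}, exactly as in the case $n=1$, reducing the computation of $r_{2}(H)$ to that of the two invariants $\mu$ and $r^{*}$ attached to the quadratic extension $\mathbb{K}/k$ with $d=2\epsilon_{0}\sqrt{l}$.

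I would first determine $\mu$. Since $l\equiv 1\pmod 8$, the rational prime $2$ splits in $k$ as $(2)=\mathfrak{2}_{1}\mathfrak{2}_{2}$, while $l$ ramifies as $(l)=(\sqrt{l})^{2}$. The factorization $(d)=(\sqrt{l})\mathfrak{2}_{1}\mathfrak{2}_{2}$ in $\mathcal{O}_{k}$ shows that $d$ has odd valuation at each of these three primes, so each of them ramifies in $\mathbb{K}=k(\sqrt{d})$; no infinite place ramifies because $\mathbb{K}$ is real. Hence $\mu=3$, and because $k$ has odd class number (only the prime $l$ ramifies in $k/\mathbb{Q}$), Theorem \ref{18} becomes $r_{2}(H)=r^{*}$.

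Next I would show that $-1\in N_{\mathbb{K}/k}(\mathbb{K}^{*})$ in both cases of the theorem. By the Hasse norm theorem it suffices to verify that the Hilbert symbol $\left(\frac{-1,d}{\mathfrak{p}}\right)=1$ at the three finite ramified primes. At $(\sqrt{l})$ this equals $\left(\frac{-1}{l}\right)=1$ since $l\equiv 1\pmod 4$. At each $\mathfrak{2}_{i}$, Lemma \ref{20} gives $\epsilon_{0}\sqrt{l}\equiv 1\pmod 4$, so $d=2(\epsilon_{0}\sqrt{l})$ is locally of the shape $2w$ with $w\in 1+4\mathbb{Z}_{2}$, and the standard $2$-adic formula yields $(-1,2w)_{2}=1$. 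Consequently $r^{*}\geq 1$, and by Remark \ref{17} distinguishing between $r^{*}=1$ and $r^{*}=2$ reduces to deciding whether $\epsilon_{0}$ itself is a norm.

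The heart of the argument is then the computation of $\left(\frac{\epsilon_{0},d}{\mathfrak{p}}\right)$ at the three ramified primes. At $(\sqrt{l})$ the same manipulation as in the proof of the case $n=1$, using $\epsilon_{0}=u+v\sqrt{l}$ with $u^{2}-lv^{2}=-1$, again gives $1$. By Hilbert reciprocity, together with the triviality of the symbol at both infinite places (both real embeddings of $k$ send $d$ to a positive value since $\mathbb{K}$ is real), the two remaining symbols $\left(\frac{\epsilon_{0},d}{\mathfrak{2}_{1}}\right)$ and $\left(\frac{\epsilon_{0},d}{\mathfrak{2}_{2}}\right)$ have product $1$, so they are equal and only one of them needs to be evaluated. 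Expanding $\left(\frac{\epsilon_{0},2\epsilon_{0}\sqrt{l}}{\mathfrak{2}_{i}}\right)$ by bilinearity, using $(x,x)_{\mathfrak{2}_{i}}=(x,-1)_{\mathfrak{2}_{i}}$, and invoking Lemma \ref{20} reduces the problem to the class of $\epsilon_{0}$ modulo $8$ at $\mathfrak{2}_{i}$. The main obstacle, and the point at which the precise shape of the statement is produced, is the passage from this purely $2$-adic datum to the rational biquadratic character: via the classical description of the fundamental unit of $\mathbb{Q}(\sqrt{l})$ in terms of the decomposition $l=a^{2}+b^{2}$ with $a$ odd and $b\equiv 0\pmod 4$, one shows that $\left(\frac{\epsilon_{0},d}{\mathfrak{2}_{i}}\right)=+1$ if and only if $\left(\frac{2}{l}\right)_{4}=(-1)^{(l-1)/8}$. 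Combined with Remark \ref{17}, this gives $r^{*}=2$ in case (1) and $r^{*}=1$ in case (2), which via $r_{2}(H)=r^{*}$ yields exactly the two assertions of the theorem.
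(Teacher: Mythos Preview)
Your proposal is correct and follows essentially the same route as the paper: compute $\mu=3$, show that the symbols $\left(\frac{-1,d}{\mathfrak{p}}\right)$ and $\left(\frac{\epsilon_{0},d}{(\sqrt{l})}\right)$ are all $1$, and reduce everything to the single local symbol $\left(\frac{\epsilon_{0},2}{\mathfrak{2}_{1}}\right)$, whose value is $\left(\frac{2}{l}\right)_{4}(-1)^{(l-1)/8}$.

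Two minor differences are worth flagging. First, the paper obtains $\mu=3$ via the conductor formula (Theorem~\ref{wi87} together with Lemma~\ref{lem: Zink 66-67}), whereas you read it off directly from $(d)=(\sqrt{l})\mathfrak{2}_{1}\mathfrak{2}_{2}$; your argument is shorter but you should say explicitly that no \emph{other} finite prime can ramify (automatic for odd $\mathfrak{p}\nmid d$, and there are no further primes above $2$). Second, for the crucial identity $\left(\frac{\epsilon_{0},2}{\mathfrak{2}_{1}}\right)=\left(\frac{2}{l}\right)_{4}(-1)^{(l-1)/8}$ the paper simply cites \cite{AM01}, while you outline a derivation through the congruence class of $\epsilon_{0}$ modulo $8$ and the decomposition $l=a^{2}+b^{2}$. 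Your outline is accurate in spirit but thin at exactly the nontrivial step; if you keep your version, either supply the computation in full or replace the sketch by the citation. The paper also gets the equality of the two symbols at $\mathfrak{2}_{1}$ and $\mathfrak{2}_{2}$ by the same direct reduction to $\left(\frac{\epsilon_{0},2}{\mathfrak{2}_{i}}\right)$ rather than by invoking Hilbert reciprocity, but your reciprocity argument is a perfectly valid alternative.
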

 \begin{proof} Since $n=2$, $n$ is even and  according to  Lemma \ref{lem: Zink 66-67}
 	 we get $\mathbb{K}=\mathbb{Q}(\sqrt{2a(l+b\sqrt{l}})=\mathbb{Q}(\sqrt{a(l+c\sqrt{l}})$ with $a=\frac{n}{2}=1$.
 	As $c\equiv 1\pmod2$, so, by Theorem \ref{wi87},  $f_\mathbb{K} = 2^{3}al =2^{3}l\equiv 0 \pmod8 $,  thus there exists as many real cyclic fields as imaginary ones having as a conductor $f_\mathbb{K}$ and as a quadratic subfield $k$. The prime ideals of  $k$  which ramify  in  $\mathbb{K}$ are $(\sqrt{l})$  and $\mathfrak{2}_{i}, \;i \in \{1, 2\} $,   where $2\mathcal{O}_{k}=\mathfrak{2}_{1}\mathfrak{2}_{2}$ is the  decomposition of $2$ in $k$,  i.e. $\mu=3$. We have
 	$$\begin{array}{ll}
 \left( \frac{-1, d}{\mathfrak{2}_{1}}\right) =\left( \frac{-1, d}{\mathfrak{2}_{2}}\right)=\left( \frac{-1, 2\epsilon_{0}\sqrt{l}}{\mathfrak{2}_{1}}\right)=\left( \frac{-1, 2}{\mathfrak{2}_{1}}\right)\left( \frac{-1, \epsilon_{0}\sqrt{l}}{\mathfrak{2}_{1}}\right)=\left( \frac{-1, 2}{2}\right)=1,\\
 	\text{indeed }\left( \frac{-1,\; \epsilon_{0}\sqrt{l}}{\mathfrak{2}_{1}}\right)=\left(\frac{\epsilon_{0}\sqrt{l}}{\mathfrak{2}_{1}}\right)^0=1\text{ since }\mathfrak{2}_{1} \text {don't ramify in } \mathbb{Q}(\sqrt{\epsilon_{0}\sqrt{l}})\ (\text{ see Lemma \ref{20}}).\\
 	\left( \frac{\epsilon_{0}, d}{\mathfrak{2}_{1}}\right) =\left( \frac{\epsilon_{0}, d}{\mathfrak{2}_{2}}\right)=\left( \frac{\epsilon_{0}, 2\epsilon_{0}\sqrt{l}}{\mathfrak{2}_{1}}\right)=\left( \frac{\epsilon_{0}, 2}{\mathfrak{2}_{1}}\right)\left( \frac{\epsilon_{0},
  \epsilon_{0}\sqrt{l}}{\mathfrak{2}_{1}}\right)=\left( \frac{\epsilon_{0}, 2}{\mathfrak{2}_{1}}\right)=\left( \frac{2}{l}\right) _{4}(-1)^{\frac{l-1}{8}}
 \ \text{ (see \cite{AM01})}.\\
 \text{Hence} \left( \frac{-\epsilon_{0}, d}{\mathfrak{2}_{1}}\right) =\left( \frac{-\epsilon_{0}, d}{\mathfrak{2}_{2}}\right)=\left( \frac{-1, d}{\mathfrak{2}_{1}}\right)\left( \frac{\epsilon_{0}, d}{\mathfrak{2}_{1}}\right) =\left(\frac{2}{l}\right) _{4}(-1)^{\frac{l-1}{8}}.
 \end{array}$$
Since $\left(\frac{-1,  \,   d}{(\sqrt{l})}\right)=\left(\frac{\pm\epsilon_{0}, \,   d}{(\sqrt{l})}\right)=1$  as in the first case, therefore	
 \begin{enumerate}[a.]
 	\item If $\left( \frac{2}{l}\right) _{4}=(-1)^{\frac{l-1}{8}}$, then $r^{*}=2$,   which implies that:   $$r_2\left(H\right) = \mu + r^{*} -3 =3+2-3=2.$$
 	 \item If $\left( \frac{2}{l}\right) _{4}\neq (-1)^{\frac{l-1}{8}}$, then $r^{*}=1$,   which implies that:   $$r_2\left(H\right) = \mu + r^{*} -3 =3+1-3=1.$$
 \end{enumerate}	
 \end{proof}
 \subsection{\textbf{Case  $n=\prod_{i=1}^tp_{i}$ and, for all $i$,  $p_i\equiv1\pmod4$}}
 \begin{them}\label{}
 	Let $\mathbb{K}=\mathbb{Q}(\sqrt{n\epsilon_{0}\sqrt{l}})$ be a real cyclic quartic number field, where $l\equiv1\pmod8$ is a positive prime integer, $n$ a square-free positive  integer relatively prime to $l$ and  $\epsilon_{0}$ the fundamental unit of  $k=\mathbb{Q}(\sqrt{l})$.
 	Let  $n=\prod_{i=1}^{i=t}p_{i}$ with $p_{i}\equiv 1\pmod4  $ for all $i\in\{1, \dots, t \}$ and $t$ is a positive integer.
 	\begin{enumerate}[\rm1.]
 		\item If, for all $i$, $(\frac{p_{i}}{l})=-1$, then  $r_2(H) =t$.
 		\item If, for all $i$, $(\frac{p_{i}}{l})= 1$, then:
 		\begin{enumerate}[\rm a.]
 			\item If $\left(\frac{p_i}{l}\right)_{_{4}}\neq \left(\frac{l}{p_i}\right)_{_{4}} $ for at least one $i\in\{1, \dots, t \}$, then $r_2\left(H\right) = 2t-1.$
 			\item If $\left(\frac{p_i}{l}\right)_{_{4}}= \left(\frac{l}{p_i}\right)_{_{4}} $ for all $i\in\{1, \dots, t \}$, then   $r_2\left(H\right) = 2t.$
 		\end{enumerate}
 	\end{enumerate}
 	Moreover, for $n=\prod_{i=1}^{i=t_{1}}p_{i}\prod_{j=1}^{i=t_{2}}q_{j}$ with   $(\frac{p_{i}}{l})=-(\frac{q_{j}}{l})=-1$ and $p_{i}\equiv q_{j}\equiv1\,\pmod4$ for all $i\in\{1, \dots, t_{1}\}$ and for all  $j\in\{1, \dots, t_{2}\}$, we have:
 	\begin{enumerate}[\rm a.]
 		\item If $\left(\frac{q_j}{l}\right)_{_{4}}\neq \left(\frac{l}{q_j}\right)_{_{4}} $ for at least one $j\in\{1, \dots, t_{2} \}$, then $r_2\left(H\right) = t_{1}+2t_{2}-1.$
 		\item If $\left(\frac{q_j}{l}\right)_{_{4}}= \left(\frac{l}{q_j}\right)_{_{4}} $ for all $j\in\{1, \dots, t_{2} \}$, then   $r_2\left(H\right) = t_{1}+2t_{2}.$
 	\end{enumerate}	
 \end{them}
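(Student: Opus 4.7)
My plan is to mimic the structure of the proofs for $n=1$ and $n=2$: determine the conductor and the number $\mu$ of ramified primes in $\mathbb{K}/k$, then compute local Hilbert symbols at each ramified prime to read off $r^{*}$ via Remark \ref{17}, and finally substitute into the ambiguous class number formula $r_2(H)=\mu+r^{*}-3$ of Theorem \ref{18}.

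\textbf{Conductor, reality and $\mu$.} Since every odd prime dividing $n$ is $\equiv 1\pmod 4$, Remark \ref{5} gives $a=n$; combining $a\equiv 1\pmod 4$ with $b\equiv 0\pmod 4$ (a consequence of $l\equiv 1\pmod 8$) places us in the third branch of Theorem \ref{wi87}, so $e=0$ and $f_{\mathbb{K}}=nl$. The reality of $\mathbb{K}$ follows from Lemma \ref{6} because $s_{l}=+1$ and $s_{p_i}=s_{q_j}=+1$ (every odd ramified prime is $\equiv 1\pmod 4$ with ramification index $2$ or $4$). Since $2\nmid f_{\mathbb{K}}$, no prime of $k$ above $2$ ramifies in $\mathbb{K}/k$, so the ramified primes of $k$ are exactly $(\sqrt{l})$ together with the primes above the $p_i$ and the $q_j$. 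A prime with $\left(\frac{\cdot}{l}\right)=-1$ is inert in $k$ and contributes one ramified prime; a prime with $\left(\frac{\cdot}{l}\right)=+1$ splits in $k$ and contributes two. This yields $\mu=1+t$, $1+2t$, or $1+t_{1}+2t_{2}$ in the three scenarios.

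\textbf{The easy Hilbert symbols.} At $(\sqrt{l})$ the computation of the $n=1$ proof carries over verbatim (it only uses that $n$ and $\epsilon_{0}$ are units at $(\sqrt{l})$), giving $\left(\frac{-1,\,d}{(\sqrt{l})}\right)=\left(\frac{\epsilon_{0},\,d}{(\sqrt{l})}\right)=1$. At an inert prime $\mathfrak{p}_{i}$ of $k$ above $p_{i}$, tameness reduces the symbols to quadratic residue symbols modulo $\mathfrak{p}_{i}$: we get $\left[\frac{-1}{\mathfrak{p}_{i}}\right]=1$ since $N(\mathfrak{p}_{i})=p_{i}^{2}\equiv 1\pmod 8$, and $\left[\frac{\epsilon_{0}}{\mathfrak{p}_{i}}\right]=1$ since the Frobenius relation $\epsilon_{0}^{p_{i}+1}\equiv -1\pmod{\mathfrak{p}_{i}}$ together with $p_{i}\equiv 1\pmod 4$ forces $\epsilon_{0}^{(p_{i}^{2}-1)/2}\equiv 1\pmod{\mathfrak{p}_{i}}$. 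Thus inert primes contribute trivially.

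\textbf{The hard part and conclusion.} The main obstacle is the symbol $\left[\frac{\epsilon_{0}}{\mathfrak{q}_{j}}\right]$ at a prime of $k$ above a split prime $q_{j}$; the symbol $\left[\frac{-1}{\mathfrak{q}_{j}}\right]=1$ is automatic because $q_{j}\equiv 1\pmod 4$. For $\epsilon_{0}$ I would appeal to the classical biquadratic reciprocity identity which, in the present setting (both $l$ and $q_{j}$ are primes $\equiv 1\pmod 4$ with $\left(\frac{l}{q_j}\right)=+1$), gives $\left[\frac{\epsilon_{0}}{\mathfrak{q}_{j}}\right]=\left(\frac{q_{j}}{l}\right)_{_{4}}\left(\frac{l}{q_{j}}\right)_{_{4}}$. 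Consequently $-1$ is always a global norm, while $\epsilon_{0}$ is a global norm precisely when this equality holds at \emph{every} split prime of $k$ dividing $n$. By Remark \ref{17} we obtain $r^{*}=2$ in that case and $r^{*}=1$ otherwise, and substituting into $r_2(H)=\mu+r^{*}-3$ yields the four announced formulas.
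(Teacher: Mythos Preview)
Your proof is correct and follows essentially the same route as the paper: compute the conductor via Theorem \ref{wi87} to see that $2$ does not ramify, count $\mu$ according to whether each $p_i$ is inert or split in $k$, verify that the symbols at $(\sqrt{l})$ and at inert primes are trivial, and then reduce everything to the identity $\left[\frac{\epsilon_0}{\wp}\right]=\left(\frac{p}{l}\right)_{4}\left(\frac{l}{p}\right)_{4}$ at the split primes. The only cosmetic differences are that the paper writes $\left[\frac{\epsilon_0}{\mathfrak{p}_i}\right]=\left(\frac{-1}{p_i}\right)$ directly (your Frobenius computation is exactly what justifies this), and that the paper cites \cite{AM01} for the biquadratic symbol identity where you invoke classical biquadratic reciprocity.
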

 \begin{proof}  Note first that  $b\equiv0\pmod2$,  and since $l\equiv1\pmod8$, so $b\equiv0\pmod4$ (see Remark \ref{5}). On the other hand,  $n=\prod_{i=1}^{i=t}p_{i}$, with  $p_{i}\equiv1\pmod4$ for all  $i=1, \dots, t$, then  $n=a=\prod_{i=1}^{i=t}p_{i}\equiv1\,\pmod4$, so  $a+b\equiv 1+0\equiv1\,\pmod4$. Thus, by Theorem \ref{wi87}, $f_\mathbb{K}=al=\prod_{i=1}^{i=t}p_{i}l$, which implies that  $f_\mathbb{K}\not \equiv 0\,\pmod8$. Thus    $\mathbb{K}$ is either real or imaginary cyclic quartic field. As  the ramification index in $\mathbb{K}$ of each prime $p_i$ is   $e_{p_{i}}=2$, then  $\frac{p_{i}-1}{2}\equiv0\pmod2$, this  implies that $s_{p_{i}}=(-1)^{\frac{p_{i}-1}{2}}=+1$. Hence
 	$S=\prod_{q|f}s_{q}=\prod_{i=1}^{i=t}s_{p_{i}}s_{l}=+1$, and Lemma \ref{6} ensures the existence of real number field $\mathbb{K}$,  for all nonzero positive integer $t$, having as conductor $f_\mathbb{K}$ and as quadratic subfield $k$.
 	\begin{enumerate}[\rm1.]
 		\item If $(\frac{p_{i}}{l})=-1$,  for all $i=1, \dots, t$,  then the prime ideals of $k$ which ramify in $\mathbb{K}$ are $(\sqrt{l})$ and the prime ideals $\mathfrak{p}_{i}$,  $i=1,  \dots,  t$,  where   $\mathfrak{p}_{i}$ is the prime ideal of $k$ above $p_i$, this implies that the number of prime ideals of $k$ ramifying in $\mathbb{K}$ is $\mu= t + 1$. Hence:
 		$$\begin{array}{ll}
 		\left(\frac{-1, \,  d}{\mathfrak{p}_{i}}\right)=\left[\frac{-1}{\mathfrak{p}_{i}}\right]=\left(\frac{-1}{p_i}\right)=1,  \text{ for all } i=1, \dots,  t. \\
 		\left(\frac{\epsilon_{0}, \,  d}{\mathfrak{p}_{i}}\right)=\left[\frac{\epsilon_{0}}{\mathfrak{p}_{i}}\right]=\left(\frac{-1}{p_{i}}\right)=1,
 		\text{ for all }  i=1, \dots,  t.
 		\end{array}$$
The values $\left(\frac{-1,  \,   d}{(\sqrt{l})}\right)$ and  $\left(\frac{\epsilon_{0}, \,   d}{(\sqrt{l})}\right)$ are computed as in the first case.
 		So $r^{*}=2$, from which we infer that:  $$r_2(H) = \mu + r^{*} -3 = t + 1 + 2-3=t.$$
 		\item If $(\frac{p_{i}}{l})=1$,  for all $i=1, \dots, t$,  then the prime ideals of $k$ which ramify in $\mathbb{K}$ are $(\sqrt{l})$,  and the prime ideals $\wp_{i}$ and $\bar{\wp}_{i}$   with $p_i\mathcal{O}_k=\wp_{i}\bar{\wp}_{i}$,  $i=1, \dots, t$, in this case $\mu= 2t + 1$.  Hence
 		$$\begin{array}{ll}
 \left(\frac{-1, d}{\wp_{i}}\right)=\left(\frac{-1, d}{\bar{\wp}_{i}}\right)=\left[\frac{-1}{\wp_{i}}\right]=\left(\frac{-1}{p_{i}}\right)=1, \text{ for all } i=1, \dots, t.\\
 		\left(\frac{\epsilon_{0}, d}{\wp_{i}}\right) =\left(\frac{\epsilon_{0},d}{\bar{\wp}_{i}}\right)=\left[ \frac{\epsilon_{0}}{\wp_{i}}\right]=\left(\frac{\epsilon_{0}, p}{\wp_{i}}\right)=\left(\frac{p_i}{l}\right)_{4}\left(\frac{l}{p_i}\right)_{4},\text{ for all } i\in\{1, \dots, t\}\, $(\text{see}\, \cite{AM01})$.\\
       \text{Hence}\, \left(\frac{-\epsilon_{0}, p}{\wp_{i}}\right)=\left(\frac{-\epsilon_{0},d}{\bar{\wp}_{i}}\right)=\left(\frac{p_i}{l}\right)_{4}\left(\frac{l}{p_i}\right)_{4}	
 \end{array}$$
 	   Since $\left(\frac{-1, \,  d}{(\sqrt{l})}\right)=\left(\frac{\pm\epsilon_{0},\,  d}{(\sqrt{l})}\right)=1$ as above, we infer that:
 		\begin{enumerate}[\rm a.]
 			\item If $\left(\frac{p_i}{l}\right)_{_{4}}\neq \left(\frac{l}{p_i}\right)_{_{4}} $ for at least one $i\in\{1, \dots, t \}$, then $r^{*}=1$ and $$r_2\left(H\right) = \mu + r^{*} -3 = 2t + 1 + 1-3=2t-1.$$
 			\item If $\left(\frac{p_i}{l}\right)_{_{4}}= \left(\frac{l}{p_i}\right)_{_{4}} $ for all $i\in\{1, \dots, t \}$, then $r^{*}=2$ and $$r_2\left(H\right) = \mu + r^{*} -3 = 2t + 1 + 2-3=2t.$$
 		\end{enumerate}		
 	\end{enumerate}
 	If $n=\prod_{i=1}^{i=t_{1}}p_{i}\prod_{j=1}^{j=t_{2}}q_{j}$ with $p_i\equiv q_j\equiv1 \pmod4$ and  $\left(\frac{p_{i}}{l}\right)=-\left(\frac{q_{j}}{l}\right)=-1$ for all $ i\in\{1, \dots, t_{1}\}$ and for all  $j\in\{1, \dots, t_{2}\}$,  then according to the two cases above we have:
 	\begin{enumerate}[\rm a.]
 		\item If $\left(\frac{q_j}{l}\right)_{_{4}}\neq \left(\frac{l}{q_j}\right)_{_{4}} $ for at least one $j\in\{1, \dots, t_{2} \}$, then $r^{*}=1$ and $$r_2\left(H\right) = \mu + r^{*} -3 = t_{1}+2t_{2} + 1 + 1-3=t_{1}+2t_{2}-1.$$
 		\item If $\left(\frac{q_j}{l}\right)_{_{4}}= \left(\frac{l}{q_j}\right)_{_{4}} $ for all $j\in\{1, \dots, t_{2} \}$, then $r^{*}=2$ and $$r_2\left(H\right) = \mu + r^{*} -3 = t_{1}+2t_{2} + 1 + 2-3=t_{1}+2t_{2}.$$
 	\end{enumerate}
 \end{proof}
 \subsection{\textbf{Case  $n=2\prod_{i=1}^tp_{i}$,  and for all $i$,   $p_i\equiv1\pmod4$}}
 \begin{them}\label{}
 	Let $\mathbb{K}=\mathbb{Q}(\sqrt{n\epsilon_{0}\sqrt{l}})$ be a real cyclic quartic number field,  where $l\equiv1\pmod8$ is a positive prime integer,  $n$ a square-free positive  integer relatively prime to $l$ and  $\epsilon_{0}$ the fundamental unit of  $k=\mathbb{Q}(\sqrt{l})$.
 	Let  $n=2\prod_{i=1}^{i=t}p_{i}$ with $p_{i}\equiv 1\pmod4  $ for all $i\in\{1,  \dots,  t \}$ and $t$ is a positive integer.
 	\begin{enumerate}[\rm1.]
 		\item If,  for all $i$,  $(\frac{p_{i}}{l})=-1$,  then:
 			\begin{enumerate}[\rm a.]
 			\item If $\left( \frac{2}{l}\right) _{4}=(-1)^{\frac{l-1}{8}}$,  then  $r_2(H) = t+2.$
 			\item If $\left( \frac{2}{l}\right) _{4}\neq (-1)^{\frac{l-1}{8}}$,  then     $r_2(H)= t+1.$
 	    	\end{enumerate}
 		\item If,  for all $i$,  $(\frac{p_{i}}{l})= 1$,  then
 		\begin{enumerate}[\rm a.]
 			\item If $\left( \frac{2}{l}\right) _{4}=(-1)^{\frac{l-1}{8}}$ and $\left(\frac{p_i}{l}\right)_{_{4}}=\left(\frac{l}{p_i}\right)_{_{4}} $ for all $ i\in\{1,  \dots,  t\}$,   then $r_2(H) =2t+2.$
 			\item If $\left( \frac{2}{l}\right) _{4}\neq (-1)^{\frac{l-1}{8}}$ or $\left(\frac{p_i}{l}\right)_{_{4}}\neq \left(\frac{l}{p_i}\right)_{_{4}} $ for at least one $ i\in\{1,  \dots,  t\}$,  then $r_2(H) = 2t+1.$
 		\end{enumerate}
 	\end{enumerate}
  Moreover,  if $n=2\prod_{i=1}^{i=t_{1}}p_{i}\prod_{j=1}^{j=t_{2}}q_{j}$ with $p_i\equiv q_j \equiv1\pmod4$ and  $(\frac{p_{i}}{l})=-(\frac{q_{j}}{l})=-1$  for all $i\in\{1,  \dots,  t_{1}\}$ and for all  $j\in\{1,  \dots,  t_{2}\}$,  then:
  \begin{enumerate}[\rm a.]
  	\item If $\left( \frac{2}{l}\right) _{4}=(-1)^{\frac{l-1}{8}}$ and $\left(\frac{q_j}{l}\right)_{_{4}}=\left(\frac{l}{q_j}\right)_{_{4}} $ for all $ j\in\{1,  \dots,  t_{2}\}$,  then $r_2(H) =t_{1}+2t_{2}+2.$
  	\item If $\left( \frac{2}{l}\right) _{4}\neq (-1)^{\frac{l-1}{8}}$ or $\left(\frac{q_j}{l}\right)_{_{4}}\neq \left(\frac{l}{q_j}\right)_{_{4}} $ for at least one $ j\in\{1,  \dots,  t_{2}\}$, then $r_2(H) =t_{1}+ 2t_{2}+1.$
  \end{enumerate}
 \end{them}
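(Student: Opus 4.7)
The plan is to apply Theorem~\ref{18} in the form $r_2(H) = \mu + r^{*} - 3$, after first checking that a real cyclic quartic field $\mathbb{K}$ of the prescribed shape actually exists. Since $n = 2\prod p_i$ is even with $n/2 = \prod p_i$ odd, Lemma~\ref{lem: Zink 66-67} lets me rewrite $\mathbb{K} = \mathbb{Q}(\sqrt{a(l + c\sqrt{l})})$ with $a = \prod p_i$ odd and $c$ odd (by Remark~\ref{5}, $l\equiv 1\pmod 8$ forces $b\equiv 0\pmod 4$, so $c$ is odd). Theorem~\ref{wi87} then gives $e = 3$, hence $f_\mathbb{K} = 8l\prod p_i \equiv 0\pmod 8$, and Lemma~\ref{6} guarantees the existence of such a real cyclic quartic field with quadratic subfield $k$.

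Next I would count the primes of $k$ ramifying in $\mathbb{K}/k$: the prime $(\sqrt{l})$, the two primes $\mathfrak{2}_1,\mathfrak{2}_2$ above $2$ (since $l\equiv 1\pmod 8$), and the primes above each $p_i$. When $\left(\frac{p_i}{l}\right) = -1$ each $p_i$ is inert and contributes one prime $\mathfrak{p}_i$, yielding $\mu = t+3$; when $\left(\frac{p_i}{l}\right) = 1$ each $p_i$ splits as $\wp_i\bar{\wp}_i$, giving $\mu = 2t+3$; the mixed case combines these to $\mu = t_1 + 2t_2 + 3$.

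To compute $r^{*}$ via Remark~\ref{17}, I need the Hilbert symbols $\left(\frac{-1,d}{\mathfrak{p}}\right)$ and $\left(\frac{\epsilon_0,d}{\mathfrak{p}}\right)$ at each ramified prime. Writing $d = 2\cdot\prod p_i\cdot \epsilon_0\sqrt{l}$ and using bilinearity, Lemma~\ref{20} kills the $\epsilon_0\sqrt{l}$ factor at $\mathfrak{2}_i$, so the calculations reduce to inputs already recorded in the preceding subsections. At $(\sqrt{l})$ both symbols equal $1$ exactly as in the $n=1$ case; at an inert $\mathfrak{p}_i$ the symbols reduce to $\left(\frac{-1}{p_i}\right) = 1$; at $\mathfrak{2}_i$ the formula cited from~\cite{AM01} gives $\left(\frac{\epsilon_0,d}{\mathfrak{2}_i}\right) = \left(\frac{2}{l}\right)_4(-1)^{(l-1)/8}$; and at a split $\wp_i$ it gives $\left(\frac{\epsilon_0,d}{\wp_i}\right) = \left(\frac{p_i}{l}\right)_4\left(\frac{l}{p_i}\right)_4$. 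The $-1$-symbol is identically $1$ since $-1$ is a square modulo each $p_i\equiv 1\pmod 4$ and at the relevant primes above $2$ and above $l$; hence $-1$ is always a norm. Therefore $r^{*} = 2$ iff every $\epsilon_0$-symbol is $1$, i.e.\ iff $\left(\frac{2}{l}\right)_4 = (-1)^{(l-1)/8}$ and $\left(\frac{p_i}{l}\right)_4 = \left(\frac{l}{p_i}\right)_4$ for every split index $i$ (the inert indices contribute trivially); otherwise $r^{*} = 1$.

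Substituting into $r_2(H) = \mu + r^{*} - 3$ yields the four stated formulas of the main cases, and the mixed case follows by simply adding the inert contribution $t_1$ to the split contribution $2t_2$ in $\mu$ and only retaining the split $q_j$ in the $\epsilon_0$-symbol condition. The main obstacle is the bookkeeping of which Hilbert symbols are automatically trivial and which depend nontrivially on the rational quartic residue symbols; once Lemma~\ref{20} has been used to annihilate $\epsilon_0\sqrt{l}$ at the primes above $2$, the argument parallels the preceding theorems in \S\ref{2} and the formulas borrowed from~\cite{AM01} supply all the nontrivial values.
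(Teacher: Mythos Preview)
Your proposal is correct and follows essentially the same route as the paper: both rewrite $\mathbb{K}$ via Lemma~\ref{lem: Zink 66-67} to read off the conductor, count ramified primes to get $\mu$, use Lemma~\ref{20} to trivialize the $\epsilon_0\sqrt{l}$ contribution at $\mathfrak{2}_i$, and import the Hilbert symbol values at $\mathfrak{2}_i$ and at the split $\wp_i$ from \cite{AM01} exactly as in the $n=2$ and $n=\prod p_i$ cases. The determination of $r^{*}$ and the final arithmetic match the paper's proof line for line.
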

 \begin{proof} 	
 	For $n=2\prod_{i=1}^{i=t}p_{i}$, with  $p_{i}\equiv1\pmod4$ for all  $i=1,  \dots,  t$,  we have,  according to  Lemma \ref{lem: Zink 66-67}, $\mathbb{K}=\mathbb{Q}(\sqrt{2a(l+b\sqrt{l}})=\mathbb{Q}(\sqrt{a(l+c\sqrt{l}})$ with $a=\frac{n}{2}=\prod_{i=1}^{i=t}p_{i}$. As  $c\equiv 1\pmod2$ and $a=\frac{n}{2}=\prod_{i=1}^{i=t}p_{i}\equiv 1\pmod4$,  so,  by Theorem \ref{wi87},   $f_{\mathbb{K}}= 2^{3}al=2^{3}l\prod_{i=1}^{i=t}p_{i}\equiv0\, \pmod8$,  this implies,  by Lemma \ref{6},  that  there are as many real cyclic  fields as  imaginary ones $\mathbb{K}$  having as a conductor $f_\mathbb{K}$ and as quadratic subfield $k$.
 	\begin{enumerate}[\rm1.]
 		\item If $(\frac{p_{i}}{l})=-1$,   for all $i=1,  \dots,  t$,   then the prime ideals of $k$ which ramify in $\mathbb{K}$ are $(\sqrt{l})$,    $\mathfrak{2}_{i}, \;i \in \{1, 2\}$,  and the prime ideals $\mathfrak{p}_{i}$,   $i=1,   \dots,   t$,   where    $\mathfrak{p}_{i}$ is the prime ideal of $k$ above  $p_i$ and $2\mathcal{O}_{k}=\mathfrak{2}_{1}\mathfrak{2}_{2}$ is the decomposition of $2$ in $k$,  this implies that the number of prime ideals of $k$ ramifying in $\mathbb{K}$ is $\mu= t + 3$. As, by Lemma \ref{20}, $\epsilon_{0}\sqrt{l}\equiv1\pmod4$, so $\prod_{i=1}^{i=t}p_{i}\epsilon_{0}\sqrt{l}\equiv1\pmod4$. Thus $\mathfrak{2}_{1}$ is unramified in $\mathbb{Q}(\sqrt{\prod_{i=1}^{i=t}p_{i}\epsilon_{0}\sqrt{l}})$, then   $\left( \frac{-1,\, \prod_{i=1}^{i=t}p_{i}\epsilon_{0}\sqrt{l}}{\mathfrak{2}_{1}}\right)=\left( \frac{\epsilon_{0}, \prod_{i=1}^{i=t}p_{i}\epsilon_{0}\sqrt{l}}{\mathfrak{2}_{1}}\right)=1$. Therefore,
 		$\begin{array}{ll}
 \left( \frac{-1, d}{\mathfrak{2}_{1}}\right) =\left( \frac{-1, d}{\mathfrak{2}_{2}}\right)=\left( \frac{-1, 2\prod_{i=1}^{i=t}p_{i}\epsilon_{0}\sqrt{l}}{\mathfrak{2}_{1}}\right)=\left( \frac{-1, 2}{\mathfrak{2}_{1}}\right)\left( \frac{-1, \prod_{i=1}^{i=t}p_{i}\epsilon_{0}\sqrt{l}}{\mathfrak{2}_{1}}\right)=\left( \frac{-1, 2}{2}\right)=1,\\
 	\left( \frac{\epsilon_{0}, d}{\mathfrak{2}_{1}}\right) =\left( \frac{\epsilon_{0}, d}{\mathfrak{2}_{2}}\right)=\left( \frac{\epsilon_{0}, 2\prod_{i=1}^{i=t}p_{i}\epsilon_{0}\sqrt{l}}{\mathfrak{2}_{1}}\right)=\left( \frac{\epsilon_{0}, 2}{\mathfrak{2}_{1}}\right)\left( \frac{\epsilon_{0}, \prod_{i=1}^{i=t}p_{i}\epsilon_{0}\sqrt{l}}{\mathfrak{2}_{1}}\right)=\left( \frac{\epsilon_{0}, 2}{\mathfrak{2}_{1}}\right)=\left( \frac{2}{l}\right) _{4}(-1)^{\frac{l-1}{8}}\\
 	\text{Hence} \left( \frac{-\epsilon_{0}, d}{\mathfrak{2}_{1}}\right) =\left( \frac{-\epsilon_{0}, d}{\mathfrak{2}_{2}}\right)=\left( \frac{2}{l}\right) _{4}(-1)^{\frac{l-1}{8}}
 	.\end{array}$
 	 		The  values $\left(\frac{-1,  \,   d}{(\sqrt{l})}\right)$,  $\left(\frac{\pm\epsilon_{0}, \,   d}{(\sqrt{l})}\right)$,  $ \left( \frac{-1, d}{\mathfrak{p}_{i}}\right) $,  and $ \left( \frac{\pm\epsilon_{0}, d}{\mathfrak{p}_{i}}\right) $ for   $i\in\{1,  \dots,  t \}$   are computed as above. So:
 	\begin{enumerate}[\rm a.]
 		\item If $\left( \frac{2}{l}\right) _{4}=(-1)^{\frac{l-1}{8}}$, then $r^{*}=2$,   which implies that:   $$r_2(H) = \mu + r^{*} -3 = t + 3 +2 -3=t+2.$$
 		\item If $\left( \frac{2}{l}\right) _{4}\neq (-1)^{\frac{l-1}{8}}$, then $r^{*}=1$,   which implies that:   $$r_2(H) = \mu + r^{*} -3 = t + 3 +1 -3=t+1.$$
 	\end{enumerate}
 		\item If $(\frac{p_{i}}{l})=1$,   for all $i=1,  \dots,  t$,   then the prime ideals of $k$ which ramify in $\mathbb{K}$ are $(\sqrt{l})$,    $\mathfrak{2}_{i}, \;i \in \{1, 2\} $ and the prime ideals $\wp_{i}$ and $\bar{\wp}_{i}$   with $p_i\mathcal{O}_k=\wp_{i}\bar{\wp}_{i}$,   $i=1,  \dots,  t$,  in this case $\mu= 2t + 3$. So using the results of  above cases we infer that:
 	\begin{enumerate}[\rm a.]
 		\item If $\left( \frac{2}{l}\right) _{4}=(-1)^{\frac{l-1}{8}}$ and $\left(\frac{p_i}{l}\right)_{_{4}}=\left(\frac{l}{p_i}\right)_{_{4}} $ for all $ i\in\{1,  \dots,  t\}$  then $r^{*}=2$,  so:   $$r_2(H) = \mu + r^{*} -3 = 2t + 3 +2 -3=2t+2.$$
 		\item If $\left( \frac{2}{l}\right) _{4}\neq (-1)^{\frac{l-1}{8}}$ or $\left(\frac{p_i}{l}\right)_{_{4}}\neq \left(\frac{l}{p_i}\right)_{_{4}} $ for at least one $ i\in\{1,  \dots,  t\}$ then $r^{*}=1$,   which implies that:   $$r_2(H) = \mu + r^{*} -3 = 2t + 3 +1 -3=2t+1.$$
 	\end{enumerate}
 	\end{enumerate}
 	If $n=2\prod_{i=1}^{i=t_{1}}p_{i}\prod_{j=1}^{j=t_{2}}q_{j}$ with $p_i\equiv q_j \equiv 1\pmod4$ and $\left(\frac{p_{i}}{l}\right)=-\left(\frac{q_{j}}{l}\right)=-1$ for all $ i\in\{1,  \dots,  t_{1}\}$ and for all  $j\in\{1,  \dots,  t_{2}\}$,   then according to the two cases above:
 	\begin{enumerate}[\rm a.]
 		\item If $\left( \frac{2}{l}\right) _{4}=(-1)^{\frac{l-1}{8}}$ and $\left(\frac{q_j}{l}\right)_{_{4}}=\left(\frac{l}{q_j}\right)_{_{4}} $ for all $ j\in\{1,  \dots,  t_{2}\}$,  then $r^{*}=2$ and   $r_2\left(H\right)= t_{1}+2t_{2}+2\cdot$
 		\item If $\left( \frac{2}{l}\right) _{4}\neq (-1)^{\frac{l-1}{8}}$ or $\left(\frac{q_j}{l}\right)_{_{4}}\neq \left(\frac{l}{q_j}\right)_{_{4}} $ for at least one $ j\in\{1,  \dots,  t_{2}\}$, then $r^{*}=1$ and   $r_2\left(H\right)= t_{1}+2t_{2}+1\cdot$
 	\end{enumerate}
 \end{proof}
 \subsection{\textbf{Case  $n =\delta\prod_{i=1}^{i=t}p_{i}$ with $t$ is odd,  and for all $i$,  $p_i\equiv3\pmod4$}}\label{sec35}
 \begin{them}\label{19}
 	Let $\mathbb{K}=\mathbb{Q}(\sqrt{n\epsilon_{0}\sqrt{l}})$ be a real cyclic quartic number field,  where $l\equiv1\pmod8$ is a positive prime integer,  $n$ a square-free positive  integer relatively prime to $l$ and  $\epsilon_{0}$ the fundamental unit of  $k=\mathbb{Q}(\sqrt{l})$. Assume  $n =\delta\prod_{i=1}^{i=t}p_{i}$,   $p_{i}\equiv3\pmod4$ for all  $i=1,  \dots,  t$       and $t$ is a positive odd integer.
 	\begin{enumerate}[\rm1.]
 		\item If,  for all $i$,   $\left(\frac{p_{i}}{l}\right)=-1$,   then  $r_2\left(H\right) =t$.
 		\item If,  for all $i$,   $\left(\frac{p_{i}}{l}\right)= 1$,   then  $r_2\left(H\right) =2t$.
 	\end{enumerate}	
 	Moreover,   if $n=\prod_{i=1}^{t_{1}}p_{i}\prod_{j=1}^{t_{2}}q_{j}$,  where  $ p_{i}\equiv q_{j}\equiv3\pmod4$ and  $\left(\frac{p_{i}}{l}\right)=-\left(\frac{q_{j}}{l}\right)=-1, $ for all  $i\in\{1,  \dots,  t_{1}\}$,   and for all $j\in\{1,  \dots,  t_{2}\}$ with $t_{1}+t_{2}$ is odd,   then  $r_2\left(H\right)=  t_{1} +2t_{2}$.
 \end{them}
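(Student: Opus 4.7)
The plan is to apply Theorem \ref{18}, which yields $r_2(H) = \mu + r^* - 3$, by computing $\mu$ in each subcase and proving $r^* = 0$ uniformly.

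First I would dispose of existence and conductor, then enumerate ramified primes. For $\delta=1$: $a=n\equiv 3\pmod 4$ (since $t$ is odd and each $p_i\equiv 3\pmod 4$), combined with $b\equiv 0\pmod 4$ (Remark \ref{5}), Theorem \ref{wi87} gives $e=2$ and $f_\mathbb{K}=4nl$, and the product $S=s_l\prod_i s_{p_i}\cdot s_2 = 1\cdot(-1)^t\cdot(-1) = 1$ ensures a real $\mathbb{K}$ via Lemma \ref{6}. For $\delta=2$: Lemma \ref{lem: Zink 66-67} rewrites $\mathbb{K}=\mathbb{Q}(\sqrt{a(l+c\sqrt{l})})$ with $a=n/2$ odd and $c$ odd, so $e=3$, $8\mid f_\mathbb{K}$, and Lemma \ref{6} applies directly. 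For $\mu$: the prime $(\sqrt{l})$ always ramifies; both $\mathfrak{2}_1,\mathfrak{2}_2$ also ramify ($v_{\mathfrak{2}_i}(d)=1$ when $\delta=2$; when $\delta=1$, Lemma \ref{20} yields $d\equiv n\equiv -1\pmod 4$ locally, so $d$ is not a local square at $\mathfrak{2}_i$); each $p_i$ or $q_j$ inert in $k$ contributes one ramified prime, each split one contributes two. Hence $\mu=t+3$, $2t+3$, or $t_1+2t_2+3$ in the three situations.

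The heart of the argument is $r^*=0$, i.e.\ none of $-1,\epsilon_0,-\epsilon_0$ lies in $N_{\mathbb{K}/k}(\mathbb{K}^*)$; by Remark \ref{17} it suffices, for each such $x$, to exhibit one prime of $k$ at which the Hilbert symbol equals $-1$. At an inert prime $\mathfrak{p}_i$ above $p_i\equiv 3\pmod 4$, the norm relation $\epsilon_0\bar\epsilon_0=-1$ together with $\bar\epsilon_0\equiv\epsilon_0^{p_i}\pmod{\mathfrak{p}_i}$ (Frobenius in $\mathbb{F}_{p_i^2}$) gives $\epsilon_0^{p_i+1}\equiv -1\pmod{\mathfrak{p}_i}$, whence
$$\left[\frac{\epsilon_0}{\mathfrak{p}_i}\right] = \epsilon_0^{(p_i^2-1)/2} = \bigl(\epsilon_0^{p_i+1}\bigr)^{(p_i-1)/2} \equiv (-1)^{(p_i-1)/2} = -1,$$
while $[-1/\mathfrak{p}_i]=(-1)^{(p_i^2-1)/2}=1$, so also $[-\epsilon_0/\mathfrak{p}_i]=-1$. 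At a split prime $\wp_j$ above $q_j\equiv 3\pmod 4$ one has $[-1/\wp_j]=(-1/q_j)=-1$, and from $u^2-v^2l=-1$ (writing $\epsilon_0=u+v\sqrt{l}$) the product $[\epsilon_0/\wp_j]\cdot[\epsilon_0/\bar\wp_j]=(-1/q_j)=-1$, so neither $\epsilon_0$ nor $-\epsilon_0$ is a norm. In case (1), where every $p_i$ is inert and no split prime is available to certify $-1\notin N$, one computes $(-1,d)_{\mathfrak{2}_i}$ directly: by Lemma \ref{20} it reduces to the 2-adic symbol $(-1,n')_2 = -1$, where $n'$ is the odd part of $n$ and satisfies $n'\equiv -1\pmod 4$.

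Combining these ingredients, Theorem \ref{18} produces $r_2(H) = \mu - 3 \in \{t,\,2t,\,t_1+2t_2\}$, as asserted. I expect the main obstacle to be the uniform treatment of the dyadic Hilbert symbols across both values of $\delta$ and all three cases: unlike the $p_i\equiv 1\pmod 4$ sections, no rational fourth-power residue symbol is directly available, so the argument has to rely on Lemma \ref{20} to pin down $d\pmod 4$ in $k_{\mathfrak{2}_i}$.
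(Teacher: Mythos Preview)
Your proposal is correct and follows the same architecture as the paper's proof: establish existence via the conductor and Lemma~\ref{6}, count $\mu$ in each subcase, show $r^*=0$ by exhibiting local obstructions for $-1$, $\epsilon_0$, $-\epsilon_0$, and invoke Theorem~\ref{18}. The one substantive difference is at the split primes $\wp_j,\bar\wp_j$ above a $q_j\equiv 3\pmod 4$: the paper computes $[\epsilon_0/\wp_j]$ and $[\epsilon_0/\bar\wp_j]$ explicitly as $\mp\left(\frac{q_j}{l}\right)_4\left(\frac{l}{q_j}\right)_4$ via a somewhat lengthy manipulation with generators $a_j\pm b_j\sqrt{l}$ of $\wp_j^{h_0}$, whereas your product identity $[\epsilon_0/\wp_j]\cdot[\epsilon_0/\bar\wp_j]=[N_{k/\QQ}(\epsilon_0)/\wp_j]=[-1/\wp_j]=-1$ shows immediately that the two symbols disagree without pinning down either value. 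Your shortcut is cleaner for this theorem; the paper's explicit evaluation is in the spirit of the neighbouring sections (where $p_i\equiv 1\pmod 4$ and the two symbols agree, so a product argument would not settle $r^*$), but its extra information is not actually used here.
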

 \begin{proof}
 Assume $n= \prod_{i=1}^{i=t} p_{i}$, so	since $t$ is odd,   we have $a=\prod_{i=1}^{i=t}p_{i}\equiv 3\,  \pmod4$,  so $a+b \equiv 3+0 \equiv 3\,  \pmod4$. Thus,  by Theorem \ref{wi87},   $f_\mathbb{K} = 2^{2}al=2^{2}l\prod_{i=1}^{i=t}p_{i}\not\equiv 0 \pmod8$. But $S =s_{2}s_{l}\prod_{i=1}^{i=t}s_{p_{i}} = +1$. Indeed for $i=1,  \dots,  t$,  we have $e_{p_{i}}=2 $ and $p_{i}\equiv 3\,  \pmod4$,   then $\frac{p_{i}-1}{2}\equiv1\pmod2$,  so $s_{p_{i}}=-1$. Therefore,  the  real number field $\mathbb{K}$  having as a conductor $f_\mathbb{K}$ and as quadratic subfield $k$ exists.\\
 	If  $n= 2\prod_{i=1}^{i=t} p_{i}$,   $p_{i}\equiv 3\pmod4$ for all $i=1,  \dots,  t$ with $t$ odd,  then  by  Lemma \ref{lem: Zink 66-67}	 we get $\mathbb{K}=\mathbb{Q}(\sqrt{2a(l+b\sqrt{l}})=\mathbb{Q}(\sqrt{a(l+c\sqrt{l}})$ with $a=\frac{n}{2}=\prod_{i=1}^{i=t}p_{i}$.	As $c\equiv 1\pmod2$ and $l\equiv1\pmod4$,  so $f_\mathbb{K}= 2^{3}l \prod_{i=1}^{i=t} p_{i} \equiv 0 \pmod8$, then there exist as many real cyclic quartic number fields as imaginary ones sharing  the conductor $f_\mathbb{K}$ and the quadratic subfield $k$.
 	\begin{enumerate}[\rm1.]
 		\item If $\left(\frac{p_{i}}{l}\right)=-1$, for all  $i\in\{1,  \dots,  t\}$,  then  the prime ideals of  $k$ which ramify in  $\mathbb{K}$ are  $\mathfrak{2}_{i}, \;i \in \{1, 2\} $,   $\mathfrak{p}_{i}$ and  $(\sqrt{l}) $,  where $\mathfrak{p}_{i}$ is the prime ideal of $k$ above $p_i$ and $2\mathcal{O}_{k}=\mathfrak{2}_{1}\mathfrak{2}_{2}$ the decomposition of $2$ in $k$.
 		\begin{enumerate}[\rm a.]
 		\item For the case $n =\prod_{i=1}^{i=t}p_{i}$,  $i\in\{1,  \dots,  t\}$,  we have:
 		$$\begin{array}{ll}
 		\left( \frac{-1, d}{\mathfrak{2}_{1}}\right) =\left( \frac{-1, d}{\mathfrak{2}_{2}}\right)=\prod_{i=1}^{i=t}\left( \frac{-1, p_{i}}{\mathfrak{2}_{1}}\right)\left( \frac{-1, \epsilon_{0}\sqrt{l}}{\mathfrak{2}_{1}}\right)=\prod_{i=1}^{i=t}\left( \frac{-1, p_{i}}{\mathfrak{2}_1}\right)=(-1)^{t}=-1.\\
 		\left(\frac{\epsilon_{0}, \,  d}{\mathfrak{p}_{i}}\right)=\left[\frac{\epsilon_{0}}{\mathfrak{p}_{i}}\right]=\left(\frac{-1}{p_{i}}\right)= -1,\;
 		\text{so}\, \left(\frac{-\epsilon_{0}, \,  d}{\mathfrak{p}_{i}}\right)=-1.
 		\end{array}$$
 	 	\item For the case $n =2\prod_{i=1}^{i=t}p_{i}$,  $i\in\{1,  \dots,  t\}$,  we have:
 	$$\begin{array}{ll}
 \left( \frac{-1, d}{\mathfrak{2}_{1}}\right) =\left( \frac{-1, d}{\mathfrak{2}_{2}}\right)=\left( \frac{-1, 2\prod_{i=1}^{i=t}p_{i}\epsilon_{0}\sqrt{l}}{\mathfrak{2}_{1}}\right)=\left( \frac{-1, 2}{\mathfrak{2}_{1}}\right)\prod_{i=1}^{i=t}\left( \frac{-1, p_{i}}{\mathfrak{2}_{1}}\right)\left( \frac{-1, \epsilon_{0}\sqrt{l}}{\mathfrak{2}_{1}}\right)=(-1)^{t}=-1,\\
 	\text{As above we have} \left(\frac{\pm\epsilon_{0}, \,  d}{\mathfrak{p}_{i}}\right)= -1.
 	 \end{array}$$
 \end{enumerate}
 		Hence for the two cases, the units $-1, \epsilon_{0}$ and $-\epsilon_{0}$ are not norms in $\mathbb{K }$, then we have: $r^{*}=0$,   which implies:   $$r_2\left(H\right) = \mu + r^{*} -3 =t+3+0-3=t.$$
 		\item If $\left(\frac{p_{i}}{l}\right)=1$ for all  $i\in\{1,  \dots,  t\}$,  then  the prime ideals of  $k$ which ramify in  $\mathbb{K}$ are  $\mathfrak{2}_{i}, \;i \in \{1, 2\} $, $(\sqrt{l})$,   $\wp_{i}$ and $\bar{\wp}_{i}$,   where $p_i\mathcal{O}_k=\wp_{i}\bar{\wp}_{i}$,  $i=1,  \dots,  t$ and $2\mathcal{O}_k=\mathfrak{2}_{1}\mathfrak{2}_{2}$.
 		\begin{enumerate}[\rm a.]
 		\item For the case $n =\prod_{i=1}^{i=t}p_{i}$,  $i\in\{1,  \dots,  t\}$,  we have for all $i\in\{1,  \dots,  t\}$:
 		$$\begin{array}{ll}
 		\left(\frac{-1, \,  d}{\wp_{i}}\right)=\left(\frac{-1,  d}{\bar{\wp}_{i}}\right)=\left[\frac{-1}{\wp_{i}}\right]=\left(\frac{-1}{p_{i}}\right)=-1.\\
 		\left(\frac{\epsilon_{0},  d}{\wp_{i}}\right)=\left[\frac{\epsilon_{0}}{\wp_{i}}\right], \text{ for all } i=1,  \dots,  t.
 		\end{array}$$
 	 To compute the last unity,  put $p_i^{h_{0}}=\wp_{i}\bar{\wp}_{i}$ and  $\wp_{i}= a_{i} +b_{i} \sqrt{l}$ and $\bar{\wp}_{i}= a_{i} -b_{i} \sqrt{l}, $ for all   $i$. According to \cite{BrPa78}  we have
 	 $\left[\frac{\epsilon_{0} \sqrt{l}}{\wp_{i}}\right]=\left[\frac{\epsilon_{0} \sqrt{l}}{\bar{\wp}_{i}}\right]= \left(\frac{p_i}{l}\right)_{4}.$  Thus
 	 $$\begin{array}{ll}
 	 \left[\frac{\epsilon_{0} }{\wp_{i}}\right]= \left(\frac{p_i}{l}\right)_{4}\left[\frac{ \sqrt{l}}{\wp_{i}}\right].\end{array}$$
 	 On the other hand,
 	 $$\begin{array}{ll}
 	 \left[\frac{ \sqrt{l}}{\wp_{i}}\right]=\left[\frac{ b_{i}^{2}\sqrt{l}}{\wp_{i}}\right] =\left[\frac{b_{i}(-a_{i}+a_{i} + b_{i} \sqrt{l})}{\wp_{i}}\right]=\left[\frac{ -a_{i}b_{i}}{\wp_{i}}\right]=-\left(\frac{a_{i}}{p_i}\right)\left(\frac{b_{i}}{p_i}\right).\end{array}$$
 	 As $p_i^{h_{0}}=a_{i}^{2} - b_{i}^{2}l$,  so  $b_{i}^{2}l\equiv a_{i}^{2}\,   \pmod {p_i}$. Since $l$ and  $p_i$ are relatively prime,  then   $b_{i}^{2}l^{2}\equiv la_{i}^{2}\,   \pmod {p_i}$,   so  $\left(\frac{b_{i}}{p_i}\right)=\left(\frac{b_{i}l}{p_i}\right)=\left(\frac{b_{i}^{2}l^{2}}{p_i}\right)_{_{4}}=\left(\frac{la_{i}^{2}}{p_i}\right)_{_{4}}=
 	 \left(\frac{l}{p_i}\right)_{4}\left(\frac{a_{i}}{p_i}\right)$.
 	 Finally,   $$\begin{array}{ll}
 	 \left[\frac{\epsilon_{0} }{\wp_{i}}\right]=- \left(\frac{p_i}{l}\right)_{_{4}}\left(\frac{ a_{i}}{p_i}\right)\left(\frac{ b_{i}}{p_i}\right)=-\left(\frac{p_i}{l}\right)_{_{4}}\left(\frac{ a_{i}}{p_i}\right)\left(\frac{l}{p_i}\right)_{_{4}}\left(\frac{ a_{i}}{p_i}\right)=-\left(\frac{p_i}{l}\right)_{_{4}}\left(\frac{l}{p_i}\right)_{_{4}}.\end{array}$$
 	 Proceeding similarly,   we get   $\left[\frac{\epsilon_{0} }{\bar{\wp_{i}}}\right]=  \left(\frac{p_i}{l}\right)_{_{4}}\left(\frac{l}{p_i}\right)_{_{4}}$ using the fact $\left[\frac{ \sqrt{l}}{\bar{\wp}_{i}}\right]=-\left[\frac{ -b_{i}^{2}\sqrt{l}}{\bar{\wp}_{i}}\right]$.\\
 	For the unit $-\epsilon_{0}$ we have: $\left(\frac{-\epsilon_{0},  d}{\wp_{i}}\right)=-\left(\frac{-\epsilon_{0},  d}{\bar{\wp}_{i}}\right)=\left(\frac{p_i}{l}\right)_{_{4}}\left(\frac{l}{p_i}\right)_{_{4}}$
 		\item For the case $n =2\prod_{i=1}^{i=t}p_{i}$,  $i\in\{1,  \dots,  t\}$,
 		Similarly we have for all $i\in\{1,  \dots,  t\}$:
 		$\left(\frac{-1, \,  d}{\wp_{i}}\right)=-1$, $\left(\frac{\epsilon_{0},  d}{\wp_{i}}\right)\neq\left(\frac{\epsilon_{0},  d}{\bar{\wp}_{i}}\right)$ and  $\left(\frac{-\epsilon_{0},  d}{\wp_{i}}\right)\neq\left(\frac{-\epsilon_{0},  d}{\bar{\wp}_{i}}\right)$ 	
 		\end{enumerate}
 	Hence for the two cases $-1, \epsilon_{0}$ and $-\epsilon_{0}$ are not norms in $\mathbb{K }$, which implies that:   $$r_2(H) = \mu + r^{*} -3 =2t+3+0-3=2t.$$
 	\end{enumerate}
 	Finally,   if $n=\delta\prod_{i=1}^{t_{1}}p_{i}\prod_{j=1}^{t_{2}}q_{j}, $ with $p_{i}\equiv q_{j}\equiv3\pmod4$ and   $\left(\frac{p_{i}}{l}\right)=-\left(\frac{q_{j}}{l}\right)=-1$ for all  $i\in\{1,  \dots,  t_{1}\}$ and for all $j\in\{1,  \dots,  t_{2}\}$ with $t_{1}+t_{2}$ is odd,  then according to the two cases above,   there are $t_{1} +2t_{2}+3$  prime ideals of $k$ which ramify in  $\mathbb{K}$  and $r^{*}=0$. Thus
 	$$r_2(H)= t_{1} +2t_{2}+3+0-3 =t_{1} +2t_{2}.$$
 \end{proof}
  \subsection{\textbf{Case  $n=\delta\prod_{i=1}^tp_{i}$, with $t$  even,  and for all $i$   $p_i\equiv3\pmod4$}}
 \begin{them}\label{}
 	Let $\mathbb{K}=\mathbb{Q}(\sqrt{n\epsilon_{0}\sqrt{l}})$ be a real cyclic quartic number field,  where $l\equiv1\pmod8$ is a positive prime integer,  $n$ a  square-free  positive integer relatively prime to $l$ and  $\epsilon_{0}$ the fundamental unit of  $k=\mathbb{Q}(\sqrt{l})$.
 	Let  $n=\delta\prod_{i=1}^{i=t}p_{i}$ with $p_{i}\equiv 3\pmod4  $ for all $i\in\{1,  \dots,  t \}$ and $t$ is an even positive integer $(\delta=1$ or $2)$.
 	\begin{enumerate}[\rm1.]
 		\item If,  for all $i$,  $(\frac{p_{i}}{l})=-1$,  then  $r_2(H) =t-1+2(\delta-1)$.
 		\item If,  for all $i$,  $(\frac{p_{i}}{l})= 1$,  then  $r_2(H) =2t-2+2(\delta-1)$.
 	\end{enumerate}
 	Moreover,  for $n=\delta\prod_{i=1}^{i=t_{1}}p_{i}\prod_{j=1}^{j=t_{2}}q_{j}$ with   $(\frac{p_{i}}{l})=-(\frac{q_{j}}{l})=-1$ and $p_{i}\equiv q_{j}\equiv3\, \pmod4$ for all $i\in\{1,  \dots,  t_{1}\}$ and for all  $j\in\{1,  \dots,  t_{2}\}$ with $ t_{1}+t_{2} $ is even,  we have $r_2(H) =t_{1}+2t_{2}-2+2(\delta-1)$.
 \end{them}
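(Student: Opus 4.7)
The plan is to adapt the template of Theorem \ref{19} (the odd-$t$ analogue) to the even-$t$ setting, paying careful attention to the two places where changing the parity of $t$ alters the analysis: the conductor drops a factor of $4$ when $\delta=1$, and the dyadic Hilbert symbol $(-1,d)_{\mathfrak{2}_{1}}$ acquires the opposite sign because $(-1)^{t}=+1$.

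First I would settle existence and count the primes of $k$ ramified in $\mathbb{K}$. When $\delta=1$, since $t$ is even and each $p_{i}\equiv 3\pmod 4$, we have $a=\prod p_{i}\equiv 1\pmod 4$; combined with $b\equiv 0\pmod 4$ from Remark \ref{5}, this yields $a+b\equiv 1\pmod 4$, so Theorem \ref{wi87} gives $e=0$ and $f_{\mathbb{K}}=al$ is odd. The Hasse product $S=s_{l}\prod s_{p_{i}}=(-1)^{t}=+1$ guarantees a real $\mathbb{K}$ via Lemma \ref{6}, and crucially $2$ does \emph{not} ramify in $\mathbb{K}/\mathbb{Q}$, so the dyadic primes contribute nothing to $\mu$. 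When $\delta=2$, Lemma \ref{lem: Zink 66-67} rewrites $\mathbb{K}=\mathbb{Q}(\sqrt{a(l+c\sqrt{l})})$ with $a=n/2=\prod p_{i}$ and $c$ odd, forcing $e=3$ and $f_{\mathbb{K}}\equiv 0\pmod 8$, so both $\mathfrak{2}_{1},\mathfrak{2}_{2}$ ramify. Counting gives $\mu=t+1+2(\delta-1)$ in Case 1 (inert $p_{i}$) and $\mu=2t+1+2(\delta-1)$ in Case 2 (split $p_{i}$).

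Next I would compute $r^{*}$ via Remark \ref{17} using Hilbert norm symbols. In Case 1, the inert prime $\mathfrak{p}_{i}$ over $p_{i}$ has residue field $\mathbb{F}_{p_{i}^{2}}$ so $[-1/\mathfrak{p}_{i}]=1$, while $N_{k/\mathbb{Q}}(\epsilon_{0})=-1$ gives $[\epsilon_{0}/\mathfrak{p}_{i}]=(-1/p_{i})=-1$ for $p_{i}\equiv 3\pmod 4$. When $\delta=2$, the dyadic computations run exactly as in the analogous $p_{i}\equiv 1\pmod 4$ case of the previous subsection and yield $(-1,d)_{\mathfrak{2}_{i}}=(-1)^{t}=1$ together with $(\epsilon_{0},d)_{\mathfrak{2}_{i}}=\left(\frac{2}{l}\right)_{4}(-1)^{(l-1)/8}$. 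Combined with the routine values at $(\sqrt{l})$, this puts exactly $-1$ in $N_{\mathbb{K}/k}(\mathbb{K}^{*})$ and excludes $\pm\epsilon_{0}$, giving $r^{*}=1$. In Case 2, the split primes $\wp_{i},\bar\wp_{i}$ immediately give $(-1,d)_{\wp_{i}}=(-1/p_{i})=-1$, while $[\epsilon_{0}/\wp_{i}]=-[\epsilon_{0}/\bar\wp_{i}]=-\left(\frac{p_{i}}{l}\right)_{4}\left(\frac{l}{p_{i}}\right)_{4}$ by the same derivation as in Theorem \ref{19}, so none of $-1,\epsilon_{0},-\epsilon_{0}$ can be a norm, whence $r^{*}=0$. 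Substituting into Theorem \ref{18} produces the announced values $r_{2}(H)=t-1+2(\delta-1)$ and $r_{2}(H)=2t-2+2(\delta-1)$.

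The mixed case follows by the same template: combining the $t_{1}$ inert primes with the $2t_{2}$ split primes, plus $(\sqrt{l})$ and the dyadic primes when $\delta=2$, gives $\mu=t_{1}+2t_{2}+1+2(\delta-1)$, while the presence of any split prime $\wp_{j}$ forces $r^{*}=0$ by the sign-discrepancy argument above; Theorem \ref{18} then delivers $r_{2}(H)=t_{1}+2t_{2}-2+2(\delta-1)$. The most delicate step is the dyadic computation in Case 1 for $\delta=2$: one must show $(-1,d)_{\mathfrak{2}_{i}}=1$ by using $\prod p_{i}\equiv 1\pmod 4$ (a consequence of $t$ being even) together with $\epsilon_{0}\sqrt{l}\equiv 1\pmod 4$ from Lemma \ref{20} to conclude that $\prod p_{i}\,\epsilon_{0}\sqrt{l}\equiv 1\pmod 4$, which makes $\mathfrak{2}_{i}$ unramified in $\mathbb{Q}(\sqrt{\prod p_{i}\,\epsilon_{0}\sqrt{l}})$ and kills the local symbol. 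This is precisely where the odd-$t$ analogue of Theorem \ref{19} diverges, since there $\prod p_{i}\equiv 3\pmod 4$ gives $(-1,d)_{\mathfrak{2}_{1}}=-1$, explaining the jump in $r^{*}$ from $0$ to $1$ as $t$ passes from odd to even.
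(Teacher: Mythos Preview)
Your proposal is correct and follows essentially the same approach as the paper's own proof: both compute $\mu$ by determining the conductor via Theorem~\ref{wi87} (noting that $a\equiv 1\pmod 4$ when $t$ is even, so the dyadic primes ramify only for $\delta=2$), then evaluate the Hilbert symbols at the ramified primes to obtain $r^{*}=1$ in Case~1 and $r^{*}=0$ in Case~2, and apply Theorem~\ref{18}. Your organization of the dyadic symbol computation in Case~1, $\delta=2$---bundling $\prod p_{i}$ with $\epsilon_{0}\sqrt{l}$ via the congruence $\prod p_{i}\cdot\epsilon_{0}\sqrt{l}\equiv 1\pmod 4$---is a minor repackaging of the paper's factor-by-factor expansion $\prod_{i}(-1,p_{i})_{\mathfrak{2}_{1}}=(-1)^{t}$, but the substance is identical.
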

 \begin{proof} For  $n=\prod_{i=1}^{i=t}p_{i}\equiv1\pmod4$, with  $p_{i}\equiv3\pmod4$ for all  $i=1,  \dots,  t$, it is easy to see that   $f_\mathbb{K}=al=\prod_{i=1}^{i=t}p_{i}l\not \equiv 0\, \pmod8$, and for $n=2\prod_{i=1}^{i=t}p_{i}$, we prove that  $f_\mathbb{K}= 2^{3}l \prod_{i=1}^{i=t} p_{i} \equiv 0 \pmod8$ .
 We proceed as in the cases above to prove,  for any even  nonzero positive integer $t$,  the existence  of real number fields $\mathbb{K}$ having $f_\mathbb{K}$ as a conductor and $k$ as a quadratic subfield.
 	\begin{enumerate}[\rm1.]
 		\item Assume $(\frac{p_{i}}{l})=-1$   for all $i$.\\
    $\bullet$ For $\delta=1$, the prime ideals of $k$ which ramify in $\mathbb{K}$ are $(\sqrt{l})$ and  $\mathfrak{p}_{i}$,  the prime ideals of $k$ above $p_i$,  thus $\mu= t + 1$. Then we have:
  $$\begin{array}{ll}
  \left( \frac{-1,d}{\sqrt{l}}\right) =\left( \frac{-1,d}{p_i}\right) =1 \, \text{and}\,
     \left( \frac{\pm\epsilon_{0},d}{p_i}\right)=-1\, \text{so just }  -1  \,\text{ is norm in} \,\mathbb{K }.\end{array}$$	
 		Then $r^{*}=1$,  from which we infer that:  $$r_2(H) = \mu + r^{*} -3 = t + 1 + 1-3=t-1.$$
 $\bullet$  For $\delta=2$,  the prime ideals of $k$ ramifying  in $\mathbb{K}$ are $(\sqrt{l})$,   $\mathfrak{2}_{i}, \;i \in \{1, 2\} $,  and  $\mathfrak{p}_{i}$,  the prime ideal of $k$ above  $p_i$, thus $\mu= t + 3$. We have:
		$$\begin{array}{ll}\left( \frac{-1,\, d}{\mathfrak{2}_{1}}\right) =\left( \frac{-1,\, d}{\mathfrak{2}_{2}}\right)=\left( \frac{-1,\, 2\prod_{i=1}^{i=t}p_{i}\epsilon_{0}\sqrt{l}}{\mathfrak{2}_{1}}\right)=(-1)^{t}=1 \;( t\; \text{is even}),\;\text{and}\, \left( \frac{\pm\epsilon_{0},d}{p_i}\right) =-1.
	\end{array}$$
		Hence $r^{*}=1$,  which implies that: $$r_2(H) = \mu + r^{*} -3 =t+ 3 +1-3=t+1.$$
 		\item Assume  $(\frac{p_{i}}{l})=1$   for all $i$.\\
    $\bullet$ For $\delta=1$,  the prime ideals of $k$ which ramify in $\mathbb{K}$ are $(\sqrt{l})$,  $\wp_{i}$ and $\bar{\wp}_{i}$   with $p_i\mathcal{O}_k=\wp_{i}\bar{\wp}_{i}$,  thus  $\mu= 2t + 1$.
 		As above,  we have:
 		$$\begin{array}{ll}\left( \frac{-1,\, d}{\mathfrak{\wp}_{i}}\right) =\left( \frac{-1,\, d}{\mathfrak{\bar{\wp}}_{i}}\right)=-1,
 		\left(\frac{\epsilon_{0},  d}{\wp_{i}}\right)\neq\left(\frac{\epsilon_{0},  d}{\bar{\wp}_{i}}\right)= \,\text{and}\,
 		\left(\frac{-\epsilon_{0},  d}{\wp_{i}}\right)\neq\left(\frac{-\epsilon_{0},  d}{\bar{\wp}_{i}}\right).
 		\end{array}$$ 	
 	So $r^{*}=0$,  from which we infer that:  $$r_2(H) = \mu + r^{*} -3 = 2t+1 + 0-3=2t-2.$$
 $\bullet$ For $\delta=2$,  then  the prime ideals of $k$ ramifying in  $\mathbb{K}$ are $(\sqrt{l})$,  $\mathfrak{2}_{i}, \;i \in \{1, 2\} $,   $\wp_{i}$   and $\bar{\wp}_{i}$,  where $p\mathcal{O}_k=\wp_{i}\bar{\wp}_{i}$  and $2\mathcal{O}_k=\mathfrak{2}_{1}\mathfrak{2}_{2}$. As above we have:
 $$\begin{array}{ll}\left( \frac{-1,\, d}{\mathfrak{\wp}_{i}}\right) =\left( \frac{-1,\, d}{\mathfrak{\bar{\wp}}_{i}}\right)=-1,
 \left(\frac{\epsilon_{0},  d}{\wp_{i}}\right)\neq\left(\frac{\epsilon_{0},  d}{\bar{\wp}_{i}}\right)\,\text{and}\,
 \left(\frac{-\epsilon_{0},  d}{\wp_{i}}\right)\neq\left(\frac{-\epsilon_{0},  d}{\bar{\wp}_{i}}\right)
 \end{array}$$ 		
		Hence $r^{*}=0$,  so: $$r_2(H) = \mu + r^{*} -3 = 2t + 3 + 0 -3=2t.$$
	\end{enumerate}
	According to previous cases,   if $n=\delta\prod_{i=1}^{t_{1}}p_{i}\prod_{j=1}^{t_{2}}q_{j}$ with $t_{1}+t_{2}$ even and  $ p_{i}\equiv q_{j}\equiv3\pmod4$, $\left(\frac{p_{i}}{l}\right)=-\left(\frac{q_{j}}{l}\right)=-1, $ for all  $i\in\{1,  \dots,  t_{1}\}$   and for all $j\in\{1,  \dots,  t_{2}\}$,   then $r^{*}=0$ and  $$r_2\left(H\right)=  t_{1} +2t_{2}-2+2(\delta-1).$$	
\end{proof}
   \subsection{\textbf{Case  $n =\delta\prod_{i=1}^{i=t}p_{i}\prod _{j=1}^{j=s}q_{j}$,   where $ p_{i}\equiv -q_{j}\equiv 1\pmod4 $ for all $ (i, j)$ and $s$  odd}}
 \begin{them}
 	Let $\mathbb{K}=\mathbb{Q}(\sqrt{n\epsilon_{0}\sqrt{l}})$ be a real cyclic quartic number field,  where $l\equiv1\pmod8$ is a positive prime integer,  $n$ a square-free positive  integer relatively prime to $l$ and  $\epsilon_{0}$ the fundamental unit of  $k=\mathbb{Q}(\sqrt{l})$. Assume  $n =\delta\prod_{i=1}^{i=t}p_{i}\prod _{j=1}^{j=s}q_{j}$  with $s$  odd and $ p_{i}\equiv- q_{j}\equiv 1 \pmod4$ are prime integers,  for all $(i,  j)\in\{1,  \dots,  t\}\times\{1,  \dots,  s\}$. Denote by $h$ the number of prime ideals of $k$ above all the  $p_{i}'s$.
 	\begin{enumerate}[\rm1.]
 		\item If,  for all $j$,   $\left(\frac{q_{j}}{l}\right)=-1$,   then  $r_2(H) =h+s$.
 		\item If,  for all $j$,  $\left(\frac{q_{j}}{l}\right)=1$,  then  $r_2(H) =h+2s$.
 	\end{enumerate}
 	Moreover,  if $\prod _{j=1}^{j=s}q_{j}=\prod_{j'=1}^{j'=s_1}q_{j'}\prod _{j=1}^{j=s_2}q_{j}$ with $\left(\frac{q_{j'}}{l}\right)=-\left(\frac{q_{j}}{l}\right)=-1$,  for all $j'=1,  \dots,  s_1$ and  $j=1,  \dots,  s_2$,   with $s_{1}+s_2$ is odd,  then  $r_2(H)=   h+s_{1}+2s_{2}$.
 \end{them}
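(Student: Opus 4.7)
The plan is to apply Theorem \ref{18} once more, reducing to the determination of $\mu$ and $r^{*}$. First I would dispatch the existence of $\mathbb{K}$. When $\delta=1$, since $s$ is odd and the $q_j$'s are $\equiv 3\pmod 4$ while the $p_i$'s are $\equiv 1\pmod 4$, one has $a=n\equiv 3\pmod 4$, and because $b\equiv 0\pmod 4$ (Remark \ref{5}), $a+b\equiv 3\pmod 4$. Theorem \ref{wi87} then gives $f_\mathbb{K}=4al\not\equiv 0\pmod 8$, and
$$S=s_{2}s_{l}\prod_{i=1}^{t}s_{p_{i}}\prod_{j=1}^{s}s_{q_{j}}=(-1)(+1)(+1)^{t}(-1)^{s}=+1$$
(using $s$ odd), so Lemma \ref{6} confirms the existence of a real $\mathbb{K}$. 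When $\delta=2$, Lemma \ref{lem: Zink 66-67} rewrites $\mathbb{K}=\mathbb{Q}(\sqrt{a'(l+c\sqrt{l})})$ with $a'=n/2$ odd and $c$ odd; Theorem \ref{wi87} now gives $f_\mathbb{K}=8a'l\equiv 0\pmod 8$, and Lemma \ref{6} produces a real field directly.

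Next I would enumerate the primes of $k$ that ramify in $\mathbb{K}$. The prime $(\sqrt{l})$ always ramifies. The two primes $\mathfrak{2}_1,\mathfrak{2}_2$ above $2$ also ramify in both cases: for $\delta=2$ this is immediate, and for $\delta=1$ it follows from $n\equiv 3\pmod 4$ combined with $\epsilon_0\sqrt l\equiv 1\pmod 4$ (Lemma \ref{20}), which gives $d\equiv 3\pmod 4$ in $\mathcal{O}_k$. Each $p_i$ contributes $1$ or $2$ ramified primes according to whether $(p_i/l)=-1$ or $+1$, for a total of $h$. Each $q_j$ contributes $1$ ramified prime $\mathfrak{q}_j$ in Case $1$ and $2$ ramified primes $\wp_j,\bar\wp_j$ in Case $2$. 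Hence $\mu=3+h+s$ in Case $1$ and $\mu=3+h+2s$ in Case $2$.

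The core task is to prove $r^{*}=0$. The strategy is to exhibit, at a prime above one of the $q_j$'s, Hilbert symbols equal to $-1$ for each of $-1,\epsilon_0,-\epsilon_0$, so that none of these three units lies in $N_{\mathbb{K}/k}(\mathbb{K}^{*})$. For an inert $\mathfrak{q}_j$ with $q_j\equiv 3\pmod 4$, the same argument as in the proof of Theorem \ref{19} produces
$$\left(\frac{-1,\,d}{\mathfrak{q}_j}\right)=\left(\frac{-1}{q_j}\right)=-1,\qquad \left(\frac{\epsilon_0,\,d}{\mathfrak{q}_j}\right)=\left[\frac{\epsilon_0}{\mathfrak{q}_j}\right]=\left(\frac{-1}{q_j}\right)=-1,$$
and consequently $\left(\frac{-\epsilon_0,\,d}{\mathfrak{q}_j}\right)=-1$ as well. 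For a split $q_j\mathcal{O}_k=\wp_j\bar\wp_j$, the same reason gives $\left(\frac{-1,\,d}{\wp_j}\right)=-1$, and the refined computation of \S\ \ref{sec35} using an explicit generator $a_j+b_j\sqrt l$ of $\wp_j^{h_{0}}$ yields
$$\left[\frac{\epsilon_0}{\wp_j}\right]=-\left(\frac{q_j}{l}\right)_{4}\left(\frac{l}{q_j}\right)_{4},\qquad \left[\frac{\epsilon_0}{\bar\wp_j}\right]=\left(\frac{q_j}{l}\right)_{4}\left(\frac{l}{q_j}\right)_{4},$$
so $\epsilon_0$ and $-\epsilon_0$ are also non-norms. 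Together with the computations at the remaining ramified primes (which proceed exactly as in the preceding theorems), this forces $r^{*}=0$.

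Combining everything gives $r_2(H)=\mu+r^{*}-3=h+s$ in Case $1$ and $h+2s$ in Case $2$. For the mixed statement, the hypothesis that $s_{1}+s_{2}$ is odd still guarantees $\prod_{j}q_j\equiv 3\pmod 4$, so the $2$-adic analysis carries over, and the $q_j$'s with $(q_j/l)=-1$ contribute $s_{1}$ ramified primes while those with $(q_j/l)=+1$ contribute $2s_{2}$; combined with $r^{*}=0$ this delivers $r_2(H)=h+s_{1}+2s_{2}$. The principal obstacle is twofold: tracking the $2$-adic Hilbert symbols when $\delta=2$ so that they do not inadvertently return $\pm\epsilon_0$ to the norm group (which is why Lemma \ref{20} is indispensable), and carrying out the sign-asymmetry computation for $[\epsilon_0/\wp_j]$ versus $[\epsilon_0/\bar\wp_j]$ in the split-$q_j$ sub-case via a carefully chosen generator of $\wp_j^{h_{0}}$.
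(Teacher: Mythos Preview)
Your computation at the inert primes $\mathfrak{q}_j$ contains a genuine error that breaks Case~1. You assert
\[
\left(\frac{-1,\,d}{\mathfrak{q}_j}\right)=\left(\frac{-1}{q_j}\right)=-1,
\]
but an inert prime $\mathfrak{q}_j$ has residue field $\mathbb{F}_{q_j^{2}}$, in which $-1$ is always a square; hence $\left[\frac{-1}{\mathfrak{q}_j}\right]=1$, not $-1$. (Theorem~\ref{19} never evaluates the $-1$-symbol at an inert prime---it evaluates it at $\mathfrak{2}_{1}$; compare also the inert computations in \S\ref{3}, where the paper writes $\left[\frac{-1}{\mathfrak{p}_i}\right]=1$.) Thus in Case~1 your argument does not exclude $-1$ from $N_{\mathbb{K}/k}(\mathbb{K}^{*})$, and $r^{*}=0$ is not established. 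There is also an arithmetic slip: had both $\left(\frac{-1,d}{\mathfrak{q}_j}\right)$ and $\left(\frac{\epsilon_0,d}{\mathfrak{q}_j}\right)$ been $-1$, their product $\left(\frac{-\epsilon_0,d}{\mathfrak{q}_j}\right)$ would be $+1$, contradicting your ``consequently $-1$''. With the correct value $\left(\frac{-1,d}{\mathfrak{q}_j}\right)=1$, one does get $\left(\frac{-\epsilon_0,d}{\mathfrak{q}_j}\right)=-1$, so your treatment of $\pm\epsilon_0$ survives; only the obstruction for $-1$ is missing.

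The paper closes this gap at the $2$-adic primes rather than at the $\mathfrak{q}_j$. Since $\prod_i p_i\,\epsilon_0\sqrt{l}\equiv 1\pmod 4$ by Lemma~\ref{20}, the prime $\mathfrak{2}_1$ is unramified in $k\bigl(\sqrt{\prod_i p_i\,\epsilon_0\sqrt{l}}\bigr)$, so that factor contributes trivially; and each $q_j\equiv 3\pmod 4$ gives $\left(\frac{-1,q_j}{\mathfrak{2}_1}\right)=-1$ (via $k_{\mathfrak{2}_1}\cong\mathbb{Q}_2$). Hence for either value of $\delta$,
\[
\left(\frac{-1,\,d}{\mathfrak{2}_1}\right)=\prod_{j=1}^{s}\left(\frac{-1,\,q_j}{\mathfrak{2}_1}\right)=(-1)^{s}=-1,
\]
using that $s$ is odd. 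This is what actually forces $-1\notin N_{\mathbb{K}/k}(\mathbb{K}^{*})$ in Case~1 and in the mixed case. Your Case~2 argument, where $\wp_j$ has residue field $\mathbb{F}_{q_j}$ and $\left(\frac{-1,d}{\wp_j}\right)=\left(\frac{-1}{q_j}\right)=-1$ genuinely holds, is fine as written.
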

 \begin{proof}
For the case $n =\prod_{i=1}^{i=t}p_{i}\prod _{j=1}^{j=s}q_{j}$, we have $f_\mathbb{K}=4al\not\equiv0\pmod8$ ($a=n$), and for
 $n =2\prod_{i=1}^{i=t}p_{i}\prod _{j=1}^{j=s}q_{j}$ ($a=\frac{n}{2}$), we have $f_\mathbb{K}=8al\equiv0\pmod8$. We proceed as above to prove that there exist some real cyclic quartic number fields  having  as a conductor $f_\mathbb{K}$ and as a quadratic subfield $k$.
 \begin{enumerate}[\rm1.]
 		\item If $\left(\frac{p_{i}}{l}\right)=\left(\frac{q_{j}}{l}\right)=-1$,  for all  $i\in\{1,  \dots,  t\}$ and  $j\in\{1,  \dots,  s\}$,  then the prime ideals of  $k$ which ramify in  $\mathbb{K}$ are $(\sqrt{l})$,   $\mathfrak{p}_{i}$,  $\mathfrak{q}_{j}$ the prime ideals above $p_i$, $q_j$ respectively, and $\mathfrak{2}_{i}$, $i \in \{1, 2\} $, $2\mathcal{O}_{k}=\mathfrak{2}_{1}\mathfrak{2}_{2}$. Thus $\mu= t+s+ 3$.
 		\begin{enumerate}[\rm a.]
 		 \item For the case $n =\prod_{i=1}^{i=t}p_{i}\prod _{j=1}^{j=s}q_{j}$ we have:
 		$$\begin{array}{ll}
 \left( \frac{-1,\, d}{\mathfrak{2}_{1}}\right) =\left( \frac{-1,\, d}{\mathfrak{2}_{2}}\right)=\left( \frac{-1,\, \prod_{j=1}^{j=s}q_{i}\prod_{i=1}^{i=t}p_{i}\epsilon_{0}\sqrt{l}}{\mathfrak{2}_{1}}\right)=\prod_{j=1}^{j=s}\left( \frac{-1,\, q_{j}}{\mathfrak{2}_{1}}\right)\left( \frac{-1,\, \prod_{i=1}^{i=t}p_{i}\epsilon_{0}\sqrt{l}}{\mathfrak{2}_{1}}\right)=(-1)^{s}=-1.		
 \end{array}$$	
  	\item For the case $n =2\prod_{i=1}^{i=t}p_{i}\prod _{j=1}^{j=s}q_{j}$ we have:
 	$$\begin{array}{ll}
 \left( \frac{-1,\, d}{\mathfrak{2}_{1}}\right) =\left( \frac{-1,\, d}{\mathfrak{2}_{2}}\right)=\left( \frac{-1,\, 2\prod_{j=1}^{j=s}q_{i}\prod_{i=1}^{i=t}p_{i}\epsilon_{0}\sqrt{l}}{\mathfrak{2}_{1}}\right)=\left( \frac{-1,\, 2}{\mathfrak{2}_{1}}\right)\prod_{j=1}^{j=s}\left( \frac{-1,\, q_{j}}{\mathfrak{2}_{1}}\right)\left( \frac{-1,\, \prod_{i=1}^{i=t}p_{i}\epsilon_{0}\sqrt{l}}{\mathfrak{2}_{1}}\right)=-1.\end{array}$$
  \end{enumerate}
 		Hence for the two cases, since
 		 $\left( \frac{\pm\epsilon_{0},d}{q_i}\right)
 		 =-1 $  we have: $r^{*}=0$,  so $$r_2(H) = \mu + r^{*} -3 = t+s+3 + 0-3=t+s.$$
 		\item If $\left(\frac{p_{i}}{l}\right)=-\left(\frac{q_{j}}{l}\right)=1$,  for all  $i$ and   $j$,  then the number of prime ideals in  $k$ which ramify in  $\mathbb{K}$ is  $\mu= 2t+s + 3$. Then for the two cases $n =\delta\prod_{i=1}^{i=t}p_{i}\prod _{j=1}^{j=s}q_{j}$, we have:
 		$$ \begin{array}{ll}\left( \frac{-1,\, d}{\mathfrak{2}_{1}}\right) =\left( \frac{-1,\, d}{\mathfrak{2}_{2}}\right)=\left( \frac{\pm\epsilon_{0},d}{q_i}\right)=-1\end{array}$$	
 		Hence  $r^{*}=0$,  so $$r_2(H) = \mu + r^{*} -3 = 2t+s+3+0-3=2t+s.$$
 		\item If $\left(\frac{p_{i}}{l}\right)=-\left(\frac{q_{j}}{l}\right)=-1$, for all  $i$ and   $j$,  then the number of prime ideals in  $k$ which ramify in  $\mathbb{K}$ is  $\mu= t+2s + 3$. Then for the two cases $n =\delta\prod_{i=1}^{i=t}p_{i}\prod _{j=1}^{j=s}q_{j}$, we have:
 		$$\begin{array}{ll}
 		\left( \frac{-1,d}{\rho_i}\right) =\left( \frac{-1,d}{\bar{\rho}_i}\right)=-1,
 		\left( \frac{\epsilon_{0},d}{\rho_j}\right) \neq\left( \frac{\epsilon_{0},d}{\bar{\rho}_j}\right) \,\text{and}\,\left( \frac{-\epsilon_{0},d}{\bar{\rho}_j}\right)\neq\left( \frac{-\epsilon_{0},d}{\bar{\rho}_j}\right).
 		\end{array}$$
 	 	Hence: $r^{*}=0$,  so $$r_2(H) = \mu + r^{*} -3 = t+2s+2 + 0-3=t+2s-1.$$
 		\item If $\left(\frac{p_{i}}{l}\right)=\left(\frac{q_{j}}{l}\right)=1$,  for all  $i$ and   $j$,  then the number of prime ideals of  $k$ which ramify in  $\mathbb{K}$ is  $\mu= 2t+2s + 3$. Then for the two cases $n =\delta\prod_{i=1}^{i=t}p_{i}\prod _{j=1}^{j=s}q_{j}$, as above:
 		$$\begin{array}{ll}
 		\left( \frac{-1,d}{\rho_i}\right) =\left( \frac{-1,d}{\bar{\rho}_i}\right)=-1,
 		\left( \frac{\epsilon_{0},d}{\rho_j}\right) \neq\left( \frac{\epsilon_{0},d}{\bar{\rho}_j}\right) \,\text{and}\,\left( \frac{-\epsilon_{0},d}{\bar{\rho}_j}\right)\neq\left( \frac{-\epsilon_{0},d}{\bar{\rho}_j}\right).
 		\end{array}$$
  	Hence  we have $r^{*}=0$,   so $$r_2(H) = \mu + r^{*} -3 = 2t+2s+3 + 0-3=2t+2s.$$
 	\end{enumerate}
 	In general,   if $\prod _{j=1}^{j=s}q_{j}=\prod_{j'=1}^{j'=s_1}q_{j'}\prod _{j=1}^{j=s_2}q_{j}$ with   $\left(\frac{q_{j'}}{l}\right)=-\left(\frac{q_{j}}{l}\right)=-1$,  for all $j'=1,  \dots,  s_1$ and  $j=1,  \dots,  s_2$,  with $s_{1}+s_2$ is odd,  then $r^{*}=0$ and:  $$r_2(H)= h+s_{1}+2s_{2}+3+0-3 = h+s_{1}+2s_{2}.$$
 \end{proof}
 \subsection{\textbf{Case  $n =\delta\prod_{i=1}^{i=t}p_{i}\prod _{j=1}^{j=s}q_{j}$,   where $ p_{i}\equiv -q_{j}\equiv 1\pmod4 $ for all $ (i, j)$ and $s$ is even}}
 \begin{them}
 	Let $\mathbb{K}=\mathbb{Q}(\sqrt{n\epsilon_{0}\sqrt{l}})$ be a real cyclic quartic number field,  where $l\equiv1\pmod8$ is a positive prime integer,  $n$ a square-free positive  integer relatively prime to $l$ and  $\epsilon_{0}$ the fundamental unit of  $k=\mathbb{Q}(\sqrt{l})$. Assume  $n =\prod_{i=1}^{i=t}p_{i}\prod _{j=1}^{j=s}q_{j}$  with $s$ an even positive integer and $ p_{i}\equiv- q_{j}\equiv 1 \pmod4$,  for all $(i,  j)\in\{1,  \dots,  t\}\times\{1,  \dots,  s\}$  are prime integers. Denote by $h$ the number of  prime ideals in $k$ above all the  $p_{i}'s$,   $i\in\{1,  \dots,  t\}$.
 	\begin{enumerate}[\rm1.]
 		\item If,  for all $j$,   $\left(\frac{q_{j}}{l}\right)=-1$,   then  $r_2(H) =h+s-1+2(\delta-1)$.
 		\item If,  for all $j$,  $\left(\frac{q_{j}}{l}\right)=1$,  then  $r_2(H) =h+2s-2+2(\delta-1)$.
 	\end{enumerate}
 	Moreover,  if $\prod _{j=1}^{j=s}q_{j}=\prod_{j'=1}^{j'=s_1}q_{j'}\prod _{j=1}^{j=s_2}q_{j}$ with $\left(\frac{q_{j'}}{l}\right)=-\left(\frac{q_{j}}{l}\right)=-1$,  for all $j'=1,  \dots,  s_1$ and  $j=1,  \dots,  s_2$,    $s_{1}+s_2$ is even,  then  $r_2(H)= h+s_{1}+2s_{2}-2+2(\delta-1)$.
 \end{them}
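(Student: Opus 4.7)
The plan is to follow the architecture of the preceding six theorems: verify the existence of a real cyclic quartic field $\mathbb{K}$ with the prescribed conductor and quadratic subfield, enumerate the primes of $k$ ramifying in $\mathbb{K}$, compute the norm residue symbols of $-1,\epsilon_{0},-\epsilon_{0}$ at each such prime to pin down $r^{*}$ via Remark \ref{17}, and then apply Theorem \ref{18}.

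For existence, when $\delta=1$, since $s$ is even we have $\prod_{j}q_{j}\equiv 1\pmod 4$ and $\prod_{i}p_{i}\equiv 1\pmod 4$, whence $a=n\equiv 1\pmod 4$; with $b\equiv 0\pmod 4$ this gives $a+b\equiv 1\pmod 4$, so Theorem \ref{wi87} yields $f_{\mathbb{K}}=al\not\equiv 0\pmod 8$. When $\delta=2$, Lemma \ref{lem: Zink 66-67} rewrites $\mathbb{K}$ with $a=n/2$ odd, and since $c$ is odd we get $f_{\mathbb{K}}=2^{3}al\equiv 0\pmod 8$. In both cases the product $S=s_{2}s_{l}\prod s_{p_{i}}\prod s_{q_{j}}$ of Lemma \ref{6} equals $+1$, since the parity of $s$ absorbs the signs coming from the $q_{j}$'s, so the real field $\mathbb{K}$ exists. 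Counting ramified primes gives $\mu=1+2(\delta-1)+h+s$ in case 1 and $\mu=1+2(\delta-1)+h+2s$ in case 2, the extra $2(\delta-1)$ coming from $\mathfrak{2}_{1},\mathfrak{2}_{2}$ when $\delta=2$.

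The heart of the proof is the norm residue symbol calculation. Using bimultiplicativity, Lemma \ref{20}, and the base symbol $\left(\frac{\epsilon_{0},2}{\mathfrak{2}_{1}}\right)=\left(\frac{2}{l}\right)_{4}(-1)^{(l-1)/8}$ from \cite{AM01}, the $q_{j}$ contribution to $\left(\frac{-1,d}{\mathfrak{2}_{1}}\right)$ is $\prod_{j}\left(\frac{-1,q_{j}}{\mathfrak{2}_{1}}\right)=(-1)^{s}=+1$, which is precisely where the present theorem departs from the preceding case ($s$ odd): the obstruction at $\mathfrak{2}_{1}$ vanishes. In case 1, the symbols at the inert primes $\mathfrak{q}_{j}$ satisfy $\left(\frac{-1,d}{\mathfrak{q}_{j}}\right)=1$ (since $-1$ is a square in $\mathbb{F}_{q_{j}^{2}}$), so $-1$ is a global norm, while $\left(\frac{\epsilon_{0},d}{\mathfrak{q}_{j}}\right)=\left(\frac{-1}{q_{j}}\right)=-1$ obstructs both $\epsilon_{0}$ and $-\epsilon_{0}$; hence $r^{*}=1$. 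In case 2, the split primes $\rho_{j},\bar{\rho}_{j}$ above $q_{j}$ yield $\left(\frac{-1}{q_{j}}\right)=-1$ (obstructing $-1$) and $\left(\frac{\epsilon_{0},d}{\rho_{j}}\right)\neq\left(\frac{\epsilon_{0},d}{\bar{\rho}_{j}}\right)$ (obstructing $\pm\epsilon_{0}$), so $r^{*}=0$. Theorem \ref{18} then delivers the stated formulas, and the mixed case follows by additivity, together with the observation that any single split $q_{j}$ already forces $r^{*}=0$.

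The main obstacle will be the careful bookkeeping of signs at the $2$-adic primes: one must verify that the parity of $s$ is the sole toggle relative to the preceding theorem and that no hidden cancellation arises among the other factors when $\delta=2$. Once this sign analysis is settled, the remaining steps reduce to the routine application of the ambiguous class number formula.
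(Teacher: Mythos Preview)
Your proposal is correct and matches the paper's approach exactly: the paper's own proof is the single line ``We proceed as in the previous cases,'' and you have correctly supplied those details, the decisive one being that $(-1)^{s}=+1$ kills the obstruction at $\mathfrak{2}_{1}$ to $-1$ being a norm that was present in the $s$-odd theorem. One cosmetic point on the existence step: for $\delta=1$ the conductor is odd so no $s_{2}$ factor enters $S$, while for $\delta=2$ the condition $f_{\mathbb{K}}\equiv 0\pmod 8$ already guarantees a real field without computing $S$; neither affects your argument.
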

 \begin{proof}
 We proceed as in the previous cases.
 \end{proof}
 \section{\bf The Case  $l=2$}\label{3}
Let  $l=2$  and  $n$ a square-free positive integer relatively prime to $2$.  Let  $\mathbb{K}=k(\sqrt{n\epsilon_{0}\sqrt{2}})$ and $k=\mathbb{Q}(\sqrt{2})$,  where  $\epsilon_{0}$ is the fundamental unit of $k$. Since $l=2$ we have $f_\mathbb{K}= 2^{3}al=2^{3}l\prod_{i=1}^{i=t}p_{i}$. Then $f_{\mathbb{K}}\equiv0\, \pmod8$,  this implies,  by Lemma \ref{6},  that  there are as many real cyclic  fields as  imaginary ones   having as conductor $f_\mathbb{K}$ and as quadratic subfield $k$. On the other hand, the prime ideals of $k$ which ramify in $\mathbb{K}$ are $(\sqrt{2})$ and the prime ideals dividing  $n$ in $k$. Denote by $\mu$ the number of prime ideals of $k$ ramifying in $\mathbb{K}$.
\subsection{\textbf{Case  $n = 1$} }
\begin{them}\label{}
	Let $\mathbb{K}=\mathbb{Q}(\sqrt{n\epsilon_{0}\sqrt{2}})$ be a real cyclic quartic number field,  where $\epsilon_{0}$ is the fundamental unit of the quadratic subfield $k=\mathbb{Q}(\sqrt{2})$ and  $n$ a square-free positive  integer relatively prime to $2$.    If $n=1$, then $r_2\left(H\right)= 0$.
\end{them}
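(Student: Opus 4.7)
My plan is to follow verbatim the template used in the proof of the $n=1$, $l\equiv 1\pmod 8$ theorem: verify that $\mathbb{K}$ is a bona fide real cyclic quartic field over $k=\mathbb{Q}(\sqrt 2)$, count the ramified primes of $k$ in $\mathbb{K}$ to get $\mu$, determine $r^{*}$ via norm residue symbols at the ramified primes using Remark \ref{17}, and then apply Theorem \ref{18}. For the existence step, write $\ell=2=1^{2}+1^{2}$, so $b=c=1$ and $a=n=1$; then Theorem \ref{wi87} gives $e=3$ and $f_{\mathbb{K}}=2^{3}\cdot 1\cdot 2=16\equiv 0\pmod 8$, and Lemma \ref{6} (with $m=0$, $S=s_{2}=+1$ enforced automatically on the real side) guarantees the existence of exactly one real cyclic quartic field with conductor $f_{\mathbb{K}}$ and quadratic subfield $k$, which is our $\mathbb{K}=\mathbb{Q}(\sqrt{\epsilon_{0}\sqrt 2})$.

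Next, since $n=1$ contributes no ramified prime, and the unique ramified prime in $k/\mathbb{Q}$ above $2$ is $(\sqrt 2)$ with $(\sqrt 2)^{2}=(2)$, the only prime ideal of $k$ ramifying in $\mathbb{K}/k$ is $(\sqrt 2)$, so $\mu=1$. To finish, I need only show that $r^{*}=2$, i.e.\ that both $-1$ and $\epsilon_{0}=1+\sqrt 2$ lie in $N_{\mathbb{K}/k}(\mathbb{K}^{*})$, which by Remark \ref{17} will give $r_{2}(H)=\mu+r^{*}-3=1+2-3=0$.

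The key computation is therefore the pair of Hilbert symbols $\left(\frac{-1,\,d}{(\sqrt 2)}\right)$ and $\left(\frac{\epsilon_{0},\,d}{(\sqrt 2)}\right)$ with $d=\epsilon_{0}\sqrt 2=2+\sqrt 2$. The cleanest way, and the one I would use, is to exhibit explicit global norms and then invoke Hasse's norm theorem (equivalently, observe that for a cyclic degree $2$ extension with a single ramified non-archimedean prime and with all infinite places splitting, the product formula forces that symbol to equal $+1$ as soon as the element is a local norm everywhere else, which is automatic at unramified primes). For $\epsilon_{0}$, take $N_{\mathbb{K}/k}(\epsilon_{0}+\sqrt d)=\epsilon_{0}^{2}-d=(3+2\sqrt 2)-(2+\sqrt 2)=1+\sqrt 2=\epsilon_{0}$; for $-1$, take $N_{\mathbb{K}/k}(\epsilon_{0}+\sqrt 2\,\sqrt d)=\epsilon_{0}^{2}-2d=(3+2\sqrt 2)-(4+2\sqrt 2)=-1$. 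These two identities immediately force $\left(\frac{-1,\,d}{(\sqrt 2)}\right)=\left(\frac{\epsilon_{0},\,d}{(\sqrt 2)}\right)=1$.

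The expected obstacle is precisely the dyadic Hilbert symbol at $(\sqrt 2)$: unlike the odd-prime cases handled earlier in the paper via $\left[\frac{\alpha}{(\sqrt l)}\right]=\left(\frac{\alpha}{l}\right)$ and rational reciprocity, the standard formulas for the quadratic residue symbol at the dyadic prime of $\mathbb{Q}(\sqrt 2)$ are considerably more delicate and would require local expansions in $\mathbb{Q}_{2}(\sqrt 2)$. Producing the two explicit norm pre-images $\epsilon_{0}+\sqrt d$ and $\epsilon_{0}+\sqrt 2\,\sqrt d$ bypasses this issue entirely and yields $r^{*}=2$ and hence $r_{2}(H)=0$ in one stroke.
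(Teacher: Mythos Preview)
Your proof is correct, and the overall architecture (count $\mu$, determine $r^{*}$, invoke Theorem~\ref{18}) matches the paper's. The existence discussion and the identification $\mu=1$ agree with what the paper uses.

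Where you genuinely diverge is in the computation of $r^{*}$. The paper evaluates the two Hilbert symbols at $(\sqrt 2)$ by symbol manipulations: it splits $\left(\frac{-1,\epsilon_{0}\sqrt 2}{(\sqrt 2)}\right)$ by bilinearity and reduces to $\left[\frac{-1}{(\sqrt 2)}\right]=\left(\frac{-1,2}{2}\right)=1$, and it handles $\left(\frac{\epsilon_{0},d}{(\sqrt 2)}\right)$ by writing $\left[\frac{1+\sqrt 2}{(\sqrt 2)}\right]=\left[\frac{1}{(\sqrt 2)}\right]=1$. You instead bypass the dyadic local computation entirely by exhibiting explicit global norm pre-images, namely $N_{\mathbb{K}/k}(\epsilon_{0}+\sqrt d)=\epsilon_{0}$ and $N_{\mathbb{K}/k}(\epsilon_{0}+\sqrt 2\,\sqrt d)=-1$, both of which check out since $d=\epsilon_{0}\sqrt 2=2+\sqrt 2$ and $\epsilon_{0}^{2}=3+2\sqrt 2$. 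This is a cleaner and more self-contained argument for this particular case: it sidesteps the issue you flag, that quadratic residue symbols at the dyadic prime of $\mathbb{Q}(\sqrt 2)$ are not determined by reduction modulo $(\sqrt 2)$ alone. The paper's symbol-by-symbol method, on the other hand, is the uniform template it applies across all the cases of $n$, where explicit norm witnesses are not generally available; your shortcut works precisely because $n=1$ and $\epsilon_{0}=1+\sqrt 2$ are so concrete.
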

\begin{proof}
	In this case only $(\sqrt{2})$  ramifies in  $\mathbb{K}$,  i.e. $\mu=1$.
	$$\begin{array}{ll}
	\left( \frac{-1, d}{\sqrt{2}}\right) = \left( \frac{-1, \epsilon_{0} \sqrt{2}}{\sqrt{2}}\right) =\left( \frac{-1, \epsilon_{0} }{\sqrt{2}}\right) \left( \frac{-1, \sqrt{2}}{\sqrt{2}}\right) = \left[ \frac{-1}{\sqrt{2}}\right] = \left( \frac{-1}{2}\right) =\left( \frac{-1, 2}{2}\right) =1.\\
	\left(\frac{\epsilon_{0}, d}{\sqrt{2}}\right)=\left[\frac{\epsilon_{0}}{\sqrt{2}}\right]=\left[\frac{1+\sqrt{2}}{\sqrt{2}}\right]=\left[\frac{1}{\sqrt{2}}\right]=1.
	\end{array}$$
	Hence $r^{*}=2$,   which implies that:   $$r_2\left(H\right) = \mu + r^{*} -3 =2+1-3=0.$$
\end{proof}
\subsection{\textbf{Case  $n=\prod_{i=1}^tp_{i}$ and,  for all $i$,   $p_i\equiv1\pmod4$}}
\begin{them}\label{7}
Let $\mathbb{K}=\mathbb{Q}(\sqrt{n\epsilon_{0}\sqrt{2}})$ be a real cyclic quartic number field,  where $\epsilon_{0}$ is the fundamental unit of the quadratic subfield $k=\mathbb{Q}(\sqrt{2})$ and  $n$ a square-free positive  integer relatively prime to $2$.
 Let  $n=\prod_{i=1}^{i=t}p_{i}$ with $p_{i}\equiv 1\pmod4  $ for all $i\in\{1,  \dots,  t \}$ and $t$ is a positive integer.
	\begin{enumerate}[\rm1.]
		\item If,  for all $i$,  $(\frac{2}{p_{i}})=-1$,  then  $r_2(H) =t$.
		\item If,  for all $i$,  $(\frac{2}{p_{i}})= 1$,  then
		\begin{enumerate}[\rm a.]
			\item If $\left(\frac{2}{p_{i}}\right)_4=\left(\frac{p_i}{2}\right)_4$, for all  $i$, then $r_2(H) = 2t.$
			\item If $\left(\frac{2}{p_{i}}\right)_4\neq\left(\frac{p_i}{2}\right)_4$ for at least one $i$, then  $r_2(H) =2t-1.$
		\end{enumerate}
	\end{enumerate}
 Moreover,  if $n=\prod_{i=1}^{i=t_{1}}p_{i}\prod_{j=1}^{j=t_{2}}q_{j}$ with $\left(\frac{2}{p_{i}}\right)=-\left(\frac{2}{q_{j}}\right)=-1$ for all $ i\in\{1,  \dots,  t_{1}\}$ and for all  $j\in\{1,  \dots,  t_{2}\}$,   then:
 \begin{enumerate}[\rm a.]
 	\item If $\left(\frac{2}{q_{j}}\right)_4=\left(\frac{q_j}{2}\right)_4$  for all  $j\in\{1,  \dots,  t_{2}\}$, then $r_2(H) =t_{1}+ 2t_{2}.$
 	\item If  $\left(\frac{2}{q_{j}}\right)_4\neq\left(\frac{q_j}{2}\right)_4$  for at least one $j\in\{1,  \dots,  t_{2}\}$, then $r_2(H) =  t_{1}+ 2t_{2}-1.$
 \end{enumerate}
\end{them}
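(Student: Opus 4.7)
The plan is to apply the ambiguous class number formula of Theorem \ref{18}, which in our setting reduces the computation of $r_2(H)$ to $\mu + r^{*} - 3$, where $\mu$ counts the prime ideals of $k=\mathbb{Q}(\sqrt{2})$ ramified in $\mathbb{K}$ and $r^{*}$ is read off from Remark \ref{17} by testing which of $-1$, $\epsilon_{0}$, $-\epsilon_{0}$ lie in $N_{\mathbb{K}/k}(\mathbb{K}^{*})$ via Hilbert symbols at the ramified primes. The existence of the real quartic field is already guaranteed by the introductory paragraph of Section \ref{3}, so only the rank computation remains.

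First I would determine $\mu$ in each regime. The prime $(\sqrt{2})$ always ramifies in $\mathbb{K}$; the behaviour of each $p_i$ in $k$ is governed by $\left(\frac{2}{p_i}\right)$. When $\left(\frac{2}{p_i}\right)=-1$ the prime $p_i$ is inert in $k$, contributing a single ramified ideal $\mathfrak{p}_i$, so $\mu=t+1$. When $\left(\frac{2}{p_i}\right)=1$ we have $p_i\mathcal{O}_k=\wp_i\bar{\wp}_i$ with both factors ramified in $\mathbb{K}$, giving $\mu=2t+1$. For the mixed case with $t_1$ inert $p_i$'s and $t_2$ split $q_j$'s we get $\mu=t_1+2t_2+1$.

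Next I compute $r^{*}$ by evaluating $\left(\frac{u,d}{\mathfrak{q}}\right)$ for $u\in\{-1,\epsilon_{0},-\epsilon_{0}\}$ and each ramified $\mathfrak{q}$. At $(\sqrt{2})$ the calculation is identical to the $n=1$ case and yields $+1$ for all three units. At an inert $\mathfrak{p}_i$, since the residue field has order $p_i^{2}$, every unit is automatically a square in that field, so $\left(\frac{-1,d}{\mathfrak{p}_i}\right)=\left(\frac{\pm\epsilon_{0},d}{\mathfrak{p}_i}\right)=1$; hence in Case 1 we find $r^{*}=2$ and $r_2(H)=t$. At a split prime $\wp_i$, bilinearity and $p_i\equiv 1\pmod 4$ give $\left(\frac{-1,d}{\wp_i}\right)=1$, while the crucial identity, obtained by the same kind of explicit computation with $\epsilon_{0}=1+\sqrt{2}$ and $\wp_i=(a_i+b_i\sqrt{2})$ as in Section \ref{sec35} and \cite{AM01}, reads
\[
\left(\frac{\epsilon_{0},d}{\wp_i}\right)=\left(\frac{2}{p_i}\right)_{4}\left(\frac{p_i}{2}\right)_{4}.
\]
Thus the norm status of $\epsilon_{0}$ hinges on whether these two rational biquadratic symbols agree at every $i$: agreement at all $i$ gives $r^{*}=2$ and $r_2(H)=2t$ (Case 2a); disagreement at some $i$ gives $r^{*}=1$ and $r_2(H)=2t-1$ (Case 2b). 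The mixed statement then follows by assembling: inert primes contribute only to $\mu$ (and produce no new obstruction, since their Hilbert symbols are always $+1$), while the split part determines whether $r^{*}=2$ or $r^{*}=1$ through the same biquadratic condition on the $q_j$'s.

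The main obstacle is the explicit biquadratic identity $\left(\frac{\epsilon_{0},d}{\wp_i}\right)=\left(\frac{2}{p_i}\right)_{4}\left(\frac{p_i}{2}\right)_{4}$, which is the analogue for $l=2$ of the $\left(\frac{p_i}{l}\right)_{4}\left(\frac{l}{p_i}\right)_{4}$ formula used repeatedly in Section \ref{2}; it must be verified by working with $\epsilon_{0}=1+\sqrt{2}$ and the explicit generator of $\wp_i$, along the lines of the computation carried out in Theorem \ref{19}. Once this identity is in hand, the statement follows by routine substitution into the ambiguous class number formula in each of the four bookkeeping cases.
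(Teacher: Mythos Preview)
Your proposal is correct and follows exactly the paper's approach: count $\mu$, evaluate the Hilbert symbols of $-1$ and $\pm\epsilon_{0}$ at each ramified prime of $k$, read off $r^{*}$ via Remark~\ref{17}, and plug into Theorem~\ref{18}. Two small points: your justification at the inert primes (``every unit is automatically a square in $\mathbb{F}_{p_i^{2}}$'') is false as stated---what is actually needed, and what the paper records, is $\left[\frac{-1}{\mathfrak{p}_i}\right]=1$ and $\left[\frac{\epsilon_0}{\mathfrak{p}_i}\right]=\left(\frac{-1}{p_i}\right)=1$ (the latter using $N(\epsilon_0)=-1$ and $p_i\equiv 1\pmod 4$); and for the split-prime identity $\left[\frac{\epsilon_0}{\wp_i}\right]=\left(\frac{2}{p_i}\right)_{4}\left(\frac{p_i}{2}\right)_{4}$ the paper does not redo a computation in the style of Theorem~\ref{19} but simply invokes \cite[Proposition~5.8, p.~160]{Lemm}.
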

 \begin{proof}
\begin{enumerate}[\rm1.]
  \item If $(\frac{2}{p_{i}})=-1$,   for all $i=1,  \dots,  t$,   then $\mu= t + 1$. Denote by $\mathfrak{p}_{i}$ the prime ideal of $k$ above $p_i$, hence
$$\begin{array}{ll}
\left(\frac{-1,  \,   d}{\mathfrak{p}_{i}}\right)=\left[\frac{-1}{\mathfrak{p}_{i}}\right]=\left(\frac{1}{p_i}\right)=1,   \text{ for all } i=1,  \dots,   t. \\
\left(\frac{\epsilon_{0},  \,   d}{\mathfrak{p}_{i}}\right)=\left[\frac{\epsilon_{0}}{\mathfrak{p}_{i}}\right]=\left(\frac{-1}{p_{i}}\right)=1,
 \text{ for all }  i=1,  \dots,   t. \\
\left(\frac{-1,  \,   d}{\sqrt{2}}\right)=\left(\frac{\epsilon_{0},  \,   d}{\sqrt{2}}\right)=1\; \text{as above}.
\end{array}$$
 So $r^{*}=2$,  from which we infer that:  $$r_2(H) = \mu + r^{*} -3 = t + 1 + 2-3=t.$$
  \item If $(\frac{2}{p_{i}})=1$,   for all $i=1,  \dots,  t$,   then $\mu= 2t + 1$. Let $\wp_{i}$ and $\bar{\wp}_{i}$ be the prime ideals of $k$ above $p_i$, $i=1,  \dots,  t$. Hence
$$\begin{array}{ll}
\left(\frac{-1,  d}{\wp_{i}}\right)=\left(\frac{-1,  d}{\bar{\wp}_{i}}\right)=\left[\frac{-1}{\wp_{i}}\right]=\left(\frac{-1}{p_{i}}\right)=1,  \text{ for all } i=1,  \dots,  t.\\
\left(\frac{\epsilon_{0},  d}{\wp_{i}}\right)=\left(\frac{\epsilon_{0},  d}{\bar{\wp}_{i}}\right)=\left[\frac{\epsilon_{0}}{\wp_{i}}\right]=\left(\frac{2}{p_{i}}\right)_4\left(\frac{p_i}{2}\right)_4 \  (\text{see \cite[Proposition 5.8, p 160]{Lemm}}).\\
\text{Hence}\;\left(\frac{-\epsilon_{0},  d}{\wp_{i}}\right)=\left(\frac{-\epsilon_{0},  d}{\bar{\wp}_{i}}\right)=\left(\frac{2}{p_{i}}\right)_4\left(\frac{p_i}{2}\right)_4
\end{array}$$
  So
 \begin{enumerate}[\rm a.]
 	\item If $\left(\frac{2}{p_{i}}\right)_4=\left(\frac{p_i}{2}\right)_4$  for all  $i=1,  \dots,  t$, then $r^{*}=2$,  so $$r_2(H) = \mu + r^{*} -3 = 2t+1+2-3=2t.$$
 	\item If $\left(\frac{2}{p_{i}}\right)_4\neq \left(\frac{p_i}{2}\right)_4$ for at least one $i=1,  \dots,  t$, then $r^{*}=1$,  so $$r_2(H) = \mu + r^{*} -3 = 2t+1+1-3=2t-1.$$
  \end{enumerate}
  In general, if $n=\prod_{i=1}^{i=t_{1}}p_{i}\prod_{j=1}^{j=t_{2}}q_{j}$ with $\left(\frac{2}{p_{i}}\right)=-\left(\frac{2}{q_{j}}\right)=-1$ for all $ i\in\{1,  \dots,  t_{1}\}$ and for all  $j\in\{1,  \dots,  t_{2}\}$,   then according to the two cases above,
  \begin{enumerate}[\rm a.]
  	\item If $\left(\frac{2}{q_{j}}\right)_4=\left(\frac{q_j}{2}\right)_4$ for all $j=1,  \dots,  t_{2}$, then $r^{*}=2$,  so $$r_2(H) = \mu + r^{*} -3 =t_{1}+ 2t_{2}+1+2-3=t_{1}+ 2t_{2}.$$
  	\item If $\left(\frac{2}{q_{j}}\right)_4\neq\left(\frac{q_j}{2}\right)_4$ for at least one $j=1,  \dots,  t_{2}$, then $r^{*}=1$,  so $$r_2(H) = \mu + r^{*} -3 = t_{1}+ 2t_{2}+1+1-3=t_{1}+ 2t_{2}-1.$$
  \end{enumerate}
\end{enumerate}
\end{proof}
\subsection{\textbf{Case  $n = \prod_{i=1}^{i=t}p_{i}$ with  $p_i\equiv3\pmod4$ for all $i$}}
\begin{them}\label{}
	Let $\mathbb{K}=\mathbb{Q}(\sqrt{n\epsilon_{0}\sqrt{2}})$ be a real cyclic quartic number field,  where $\epsilon_{0}$ is the fundamental unit of the quadratic subfield $k=\mathbb{Q}(\sqrt{2})$ and  $n$ a square-free positive  integer relatively prime to $2$. Assume  $n = \prod_{i=1}^{i=t}p_{i}$,   $p_{i}\equiv3\pmod4$ for all  $i=1,  \dots,  t$ and $t$ is a positive integer.
\begin{enumerate}[\rm1.]
	\item If,  for all $i$,   $\left(\frac{2}{p_{i}}\right)=-1$,   then  $r_2\left(H\right) =t-1$.
	\item If,  for all $i$,   $\left(\frac{2}{p_{i}}\right)= 1$,   then  $r_2\left(H\right) =2t-2$.
\end{enumerate}	
Moreover,   if $n=\prod_{i=1}^{t_{1}}p_{i}\prod_{j=1}^{t_{2}}q_{j}$,  where  $ p_{i}\equiv q_{j}\equiv3\pmod4$ and  $\left(\frac{2}{p_{i}}\right)=-\left(\frac{2}{q_{j}}\right)=-1, $ for all  $i\in\{1,  \dots,  t_{1}\}$,   and for all $j\in\{1,  \dots,  t_{2}\}$,   then  $r_2\left(H\right)=  t_{1} +2t_{2}-2$.
\end{them}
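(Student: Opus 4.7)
The plan is to follow the template of the preceding theorems: apply Theorem \ref{18}, which with $r=1$ and $c=0$ gives $r_2(H)=\mu+r^*-3$, then compute $\mu$ by counting ramified primes of $k$ in $\mathbb{K}$ and determine $r^*$ via Remark \ref{17} by evaluating the Hilbert norm residue symbols of $-1$, $\epsilon_0$ and $-\epsilon_0$ at every ramified prime of $k$. The existence of a real $\mathbb{K}$ with the required conductor and quadratic subfield is already recorded at the opening of \S\ref{3} since $f_\mathbb{K}\equiv0\pmod8$.

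First I would tally $\mu$. The prime $(\sqrt{2})$ always ramifies in $\mathbb{K}/k$, and each odd prime $p_i\mid n$ contributes one prime ideal $\mathfrak{p}_i$ when $\left(\frac{2}{p_i}\right)=-1$ (inert) and two prime ideals $\wp_i,\bar\wp_i$ when $\left(\frac{2}{p_i}\right)=+1$ (split). This gives $\mu=t+1$ in case~1, $\mu=2t+1$ in case~2, and $\mu=t_1+2t_2+1$ in the mixed case.

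Next I would compute the local symbols. At an inert $\mathfrak{p}_i$ the residue field is $\mathbb{F}_{p_i^2}$, so $\left[\frac{-1}{\mathfrak{p}_i}\right]=1$ automatically, whereas the identity $(1+\sqrt{2})(1-\sqrt{2})=-1$ together with the Frobenius on $\mathbb{F}_{p_i^2}/\mathbb{F}_{p_i}$ and $p_i\equiv3\pmod4$ forces $\left[\frac{\epsilon_0}{\mathfrak{p}_i}\right]=(-1)^{(p_i-1)/2}=-1$. At a split $\wp_i$, since $p_i\equiv3\pmod4$ we get $\left[\frac{-1}{\wp_i}\right]=-1$, and a direct $\mathbb{F}_{p_i}$ computation based on the same identity shows $\left[\frac{\epsilon_0}{\wp_i}\right]\left[\frac{\epsilon_0}{\bar\wp_i}\right]=(-1)^{(p_i-1)/2}=-1$, so the two symbols are opposite. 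At the dyadic prime $(\sqrt{2})$ I would reuse the $n=1$ base evaluation $\left(\frac{\pm1,d}{\sqrt{2}}\right)=\left(\frac{\pm\epsilon_0,d}{\sqrt{2}}\right)=1$ and dispose of the remaining factors $\left(\frac{\cdot,p_i}{\sqrt{2}}\right)$ using Hilbert bilinearity and the global product formula, which reduces them to the already-computed symbols at odd primes.

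Collecting everything, in case~1 only $-1$ survives as a global norm, so $r^*=1$ and $r_2(H)=(t+1)+1-3=t-1$. In case~2 the split-prime calculation forces $-1$ out of $N_{\mathbb{K}/k}(\mathbb{K}^*)$ while the inequality between the symbols at $\wp_i$ and $\bar\wp_i$ forces out $\epsilon_0$ and $-\epsilon_0$, giving $r^*=0$ and $r_2(H)=2t-2$. The mixed case inherits $r^*=0$ by the same two mechanisms (the $q_j$'s kill $-1$ via the split computation, the $p_i$'s kill $\pm\epsilon_0$ via the inert computation), hence $r_2(H)=t_1+2t_2-2$. The main obstacle I anticipate is the $\epsilon_0$ evaluation at a split $\wp_i$ with $p_i\equiv3\pmod4$: the biquadratic identity $\left[\frac{\epsilon_0}{\wp_i}\right]=\left(\frac{2}{p_i}\right)_4\left(\frac{p_i}{2}\right)_4$ used in Theorem \ref{7} requires $p_i\equiv1\pmod4$, so a direct residue-field argument is needed here; every other step is templated on the preceding theorems.
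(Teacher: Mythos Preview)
Your proposal is correct and follows essentially the same approach as the paper: count $\mu$, determine $r^*$ via the norm residue symbols of $-1$ and $\pm\epsilon_0$ at each ramified prime, and apply Theorem~\ref{18}. Two minor remarks: (i) in the formula $r_2(H)=\mu+r^*-3$ you want $c=1$, not $c=0$, since every field contains the primitive square root of unity $-1$ (your final formula is nonetheless correct); (ii) for the key fact that $\left[\frac{\epsilon_0}{\wp_i}\right]=-\left[\frac{\epsilon_0}{\bar\wp_i}\right]$ at a split prime with $p_i\equiv3\pmod4$, the paper simply cites \cite[p.~160]{Lemm}, whereas your direct argument via $\epsilon_0\bar\epsilon_0=-1$ and Galois invariance of the residue symbol (giving $\left[\frac{\epsilon_0}{\wp_i}\right]\left[\frac{\epsilon_0}{\bar\wp_i}\right]=\left[\frac{-1}{\wp_i}\right]=-1$) is a clean self-contained alternative.
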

\begin{proof}
\begin{enumerate}[\rm1.]
\item If $\left(\frac{2}{p_{i}}\right)=-1$, for all  $i\in\{1,  \dots,  t\}$,  then $\mu=t+1$.  For $\mathfrak{p}_{i}$  the prime ideal of $k$ above $p_i$   we have
$$\begin{array}{ll}
\left(\frac{-1, \,  d}{\mathfrak{p}_{i}}\right)=\left[\frac{-1}{\mathfrak{p}_{i}}\right]= \left(\frac{1}{p_{i}}\right)=1 \text{ and }
\left(\frac{\epsilon_{0}, \,  d}{\mathfrak{p}_{i}}\right)=\left[\frac{\epsilon_{0}}{\mathfrak{p}_{i}}\right]=\left(\frac{-1}{p_{i}}\right)= -1,\;
\text{hence}\, \left(\frac{-\epsilon_{0}, \,  d}{\mathfrak{p}_{i}}\right)=-1.
\end{array}$$
Since $\left(\frac{-1,  \,   d}{(\sqrt{2})}\right)=1$,  just the unit $-1$ is norm in $\mathbb{K}$.
Hence $r^{*}=1$,   which implies that:   $$r_2\left(H\right) = \mu + r^{*} -3 =t+1+1-3=t-1.$$
\item If $\left(\frac{2}{p_{i}}\right)=1$ for all  $i\in\{1,  \dots,  t\}$,  then  the prime ideals of  $k$ which ramify in  $\mathbb{K}$ are
$(\sqrt{2})$,   $\wp_{i}$ and $\bar{\wp}_{i}$,   where $p_i\mathcal{O}_k=\wp_{i}\bar{\wp}_{i}$,  $i=1,  \dots,  t$. Hence for all $i\in\{1,  \dots,  t\}$,  we have:
$$\begin{array}{ll}\left(\frac{-1, \,  d}{\wp_{i}}\right)=\left(\frac{-1,  d}{\bar{\wp}_{i}}\right)=\left[\frac{-1}{\wp_{i}}\right]=\left(\frac{-1}{p_{i}}\right)=-1.\end{array}$$
On the other hand, as $p_i\equiv3\pmod4$, so $\left(\frac{\epsilon_{0}, d}{\wp_{i}}\right)=\left[\frac{\epsilon_{0}}{\wp_{i}}\right]=-\left[\frac{\epsilon_{0}}{\bar{\wp_{i}}}\right]=
-\left(\frac{\epsilon_{0}, d}{\bar{\wp_{i}}}\right)$ (\text{see \cite[page 160]{Lemm}}). Thus  $\left(\frac{\epsilon_{0}, d}{\wp_{i}}\right)=\pm1$ and $\left(\frac{\epsilon_{0}, d}{\bar{\wp_{i}}}\right)=\mp1$, wich implies $\left(\frac{-\epsilon_{0}, d}{\wp_{i}}\right)=\mp1$ and $\left(\frac{-\epsilon_{0}, d}{\bar{\wp_{i}}}\right)=\pm1$.\\
Hence $r^{*}=0$,   and we infer that:   $$r_2(H) = \mu + r^{*} -3 =2t+1+0-3=2t-2.$$
\end{enumerate}
Finally,   if $n=\prod_{i=1}^{t_{1}}p_{i}\prod_{j=1}^{t_{2}}q_{j}, $ with $p_{i}\equiv q_{j}\equiv3\pmod4$ and   $\left(\frac{2}{p_{i}}\right)=-\left(\frac{2}{p_{i}}\right)=-1$ for all  $i\in\{1,  \dots,  t_{1}\}$ and for all $j\in\{1,  \dots,  t_{2}\}$,  then according to the two cases above,   there are $t_{1} +2t_{2}+1$  prime ideals of $k$ which ramifies in  $\mathbb{K}$  and $r^{*}=0$. Thus
$$r_2(H)= t_{1} +2t_{2}+1+0-3 =t_{1} +2t_{2}-2.$$
\end{proof}
\subsection{\textbf{Case  $n =\prod_{i=1}^{i=t}p_{i}\prod _{j=1}^{j=s}q_{j}$,   $p_{i}\equiv-q_{j}\equiv1\pmod4$  $\forall (i,  j)$ }}
\begin{them}
Let $\mathbb{K}=\mathbb{Q}(\sqrt{n\epsilon_{0}\sqrt{2}})$ be a real cyclic quartic number field,  where  $n$ a square-free positive  integer relatively prime to $2$ and  $\epsilon_{0}$ the fundamental unit of  $k=\mathbb{Q}(\sqrt{2})$. Assume $n =\prod_{i=1}^{i=t}p_{i}\prod _{j=1}^{j=s}q_{j}$ with  $p_{i}\equiv -q_{j}\equiv 1\pmod4$ for all  $(i,  j)\in\{1,  \dots,  t\}\times\{1,  \dots,  s\}$. Denote by $h$ the number of prime ideals of $k$ dividing all the  $p_{i}'s$,   $i\in\{1,  \dots,  t\}$.
	\begin{enumerate}[\rm1.]
	\item If,  for all $j$,   $\left(\frac{2}{q_{j}}\right)=-1$,  then  $r_2(H) =h+s-1$.
	\item If,  for all $j$,  $\left(\frac{2}{q_{j}}\right)=1$,  then  $r_2(H) =h+2s-2$.
    \end{enumerate}
Moreover,    if  $\prod _{j=1}^{j=s}q_{j}=\prod _{i=1}^{i=s_{1}}q_{i}\prod _{j=1}^{j=s_{2}}q_{j}$ with    $q_{i}\equiv q_{j}\equiv3\,  \pmod4$ and $\left(\frac{2}{q_{j}}\right)=-\left(\frac{2}{q_{j}}\right)=-1,   i\in\{1,  \dots,  s_{1}\},   j\in\{1,  \dots,  s_{2}\}$,  then  $r_2(H)=h+s_{1}+2s_{2}-2$.
\end{them}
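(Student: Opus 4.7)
The plan is to invoke the ambiguous class number formula of Theorem \ref{18}, which gives
\[ r_2(H)=\mu+r^{*}-3, \]
and to compute $\mu$ (the number of prime ideals of $k$ ramifying in $\mathbb{K}/k$) together with $r^{*}\in\{0,1,2\}$, the latter being determined by which of the units $-1,\epsilon_{0},-\epsilon_{0}$ lie in $N_{\mathbb{K}/k}(\mathbb{K}^{*})$ via Remark \ref{17}. The computational template from the preceding subsections of \S\ref{3} applies directly, except that now the prime divisors of $n$ split into two families, namely $\{p_{i}\}_{i=1}^{t}$ with $p_{i}\equiv 1\,\pmod 4$ and $\{q_{j}\}_{j=1}^{s}$ with $q_{j}\equiv 3\,\pmod 4$, each with its own splitting behaviour in $k=\mathbb{Q}(\sqrt{2})$, and one must keep track of both contributions at once.

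For $\mu$, the prime $(\sqrt{2})$ always ramifies, the primes of $k$ above the $p_{i}$'s contribute $h$ by hypothesis, and each $q_{j}$ contributes one prime if $(\tfrac{2}{q_{j}})=-1$ (inert) or two primes if $(\tfrac{2}{q_{j}})=1$ (split). This yields $\mu=h+s+1$ in case (1), $\mu=h+2s+1$ in case (2), and $\mu=h+s_{1}+2s_{2}+1$ in the moreover clause.

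The heart of the proof is the computation of the Hilbert norm residue symbols $(\tfrac{u,d}{\mathfrak{r}})$ for $u\in\{-1,\epsilon_{0},-\epsilon_{0}\}$ and each ramified prime $\mathfrak{r}$. The values at $(\sqrt{2})$ and at the primes of $k$ above the $p_{i}$'s are already worked out in Theorem \ref{7}; in particular the symbols for $-1$ there are all $+1$, so those primes impose no obstruction to $-1$ being a norm. The decisive new input is the behaviour at the $q_{j}$'s, where the congruence $q_{j}\equiv 3\,\pmod 4$ matters. In the inert case, the residue field at $\mathfrak{q}_{j}$ has order $q_{j}^{2}$, so $\bigl[\tfrac{-1}{\mathfrak{q}_{j}}\bigr]=(-1)^{(q_{j}^{2}-1)/2}=1$, while the Frobenius relation $\epsilon_{0}^{q_{j}+1}\equiv N_{k/\mathbb{Q}}(\epsilon_{0})=-1\pmod{\mathfrak{q}_{j}}$ gives $\bigl[\tfrac{\epsilon_{0}}{\mathfrak{q}_{j}}\bigr]=(\tfrac{-1}{q_{j}})=-1$. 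In the split case $q_{j}\mathcal{O}_{k}=\rho_{j}\bar{\rho}_{j}$, we get $(\tfrac{-1,d}{\rho_{j}})=(\tfrac{-1}{q_{j}})=-1$, so $-1$ already fails to be a norm, and the same adaptation of the argument of the preceding theorem to $q_{j}\equiv 3\,\pmod 4$ yields $(\tfrac{\epsilon_{0},d}{\rho_{j}})=-(\tfrac{\epsilon_{0},d}{\bar{\rho}_{j}})$, preventing $\pm\epsilon_{0}$ from being norms.

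Assembling these, in case (1) only $-1$ is a norm, so $r^{*}=1$ and $r_2(H)=h+s+1+1-3=h+s-1$; in case (2) none of $-1,\pm\epsilon_{0}$ is a norm, so $r^{*}=0$ and $r_2(H)=h+2s+1+0-3=h+2s-2$. The moreover clause follows identically: a single split $q_{j}$ from the $s_{2}$ family already forces $(\tfrac{-1,d}{\rho_{j}})=-1$, so $r^{*}=0$ and $r_2(H)=h+s_{1}+2s_{2}-2$. The main obstacle is not conceptual but bookkeeping, namely checking that the $p_{i}$-contributions never spuriously restore a unit to the global norm group by cancellation with the $q_{j}$-contributions; this is automatic since a single local obstruction kills the global norm condition through the product formula.
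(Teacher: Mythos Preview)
Your proposal is correct and follows essentially the same approach as the paper: apply the formula $r_2(H)=\mu+r^{*}-3$, count $\mu$ by tracking the splitting of the $p_i$'s (encoded in $h$) and the $q_j$'s, and determine $r^{*}$ from the local norm residue symbols at the ramified primes, the key point being that the primes above the $q_j$'s (with $q_j\equiv 3\pmod 4$) obstruct $\pm\epsilon_0$ in the inert case and obstruct all three units in the split case. The paper actually breaks into four sub-cases according to the joint splitting behaviour of the $p_i$'s and $q_j$'s, whereas you handle the $p_i$-contribution uniformly via $h$ by invoking Theorem~\ref{7}; this is a cleaner organisation but not a different argument.
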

\begin{proof}
\begin{enumerate}[\rm1.]
	\item  If $\left(\frac{p_{i}}{l}\right)=\left(\frac{q_{j}}{l}\right)=-1$,  for all $i=1,  \dots,  t$ and for all $j=1,  \dots,  s$,  then   the prime ideals of  $k$ which ramify in $\mathbb{K}$ are $(\sqrt{2})$,  $\mathfrak{ p}_{i}$ and $\mathfrak{q}_{j}$,   the prime ideals of $k$ above ${ p}_{i}$ and ${q}_{j}$ respectively,  i.e. $\mu= t+s+1$.	As above we have:
	$$\begin{array}{ll}
 \left(\frac{-1,d}{\sqrt{2}}\right) =\left(\frac{-1,d}{\mathfrak{ p}_i} \right)=\left( \frac{-1,d}{\mathfrak{q}_j} \right) =1\; \text{and}\; \left( \frac{\pm\epsilon_{0},d}{\mathfrak{q}_j}\right) =-1.\end{array}$$
 thus  $r^{*}=1$,  so $$r_2(H) = \mu + r^{*} -3 = t+s+1 + 1-3=t+s-1.$$
\item If $\left(\frac{p_{i}}{l}\right)=-\left(\frac{q_{j}}{l}\right)=1$,  for all $i=1,  \dots,  t$ and for all $j=1,  \dots,  s$,  then  the prime ideals of  $k$ which ramifies in  $\mathbb{K}$ are $(\sqrt{2})$,   $\wp_{i}$,   $\bar{\wp}_{i}$ and $\mathfrak{q}_{j}$,   where  $p\mathcal{O}_k=\wp_{i}\bar{\wp}_{i}$ and $\mathfrak{q}_{j}$ is the prime ideal of $k$ above  ${q}_{j}$,  i.e. $\mu= 2t+s+1$. As above we have:
$$\begin{array}{ll} \left(\frac{-1,d}{\sqrt{2}}\right) =\left(\frac{-1,d}{\mathfrak{ \wp}_i} \right)=\left(\frac{-1,d}{\mathfrak{ \bar{\wp}}_i} \right)=\left( \frac{-1,d}{\mathfrak{q}_j} \right) =1\; \text{and}\; \left( \frac{\pm\epsilon_{0},d}{\mathfrak{q}_j}\right) =-1.\end{array}$$
 thus $r^{*}=1$,  so $$r_2(H) = \mu + r^{*} -3 = 2t+s+1 + 1-3=2t+s-1.$$

  \item If $\left(\frac{p_{i}}{l}\right)=-\left(\frac{q_{j}}{l}\right)=-1$,  for all $i=1,  \dots,  t$ and  for all $j=1,  \dots,  s$,  then  the prime ideals of  $k$ which ramify in  $\mathbb{K}$ are $(\sqrt{2})$,   $\mathfrak{p}_{i}$,   $\rho_{j}$,   and $\bar{\rho}_{j}$,  where   $q_j\mathcal{O}_k=\rho_{j}\bar{\rho}_{j}$ and $\mathfrak{p}_{j}$ is the prime ideal of $k$ above  ${p}_{j}$,  i.e. $\mu= t+2s+1$.  As above we have:
  $$\begin{array}{ll} \left(\frac{-1,d}{\rho_j} \right)=\left( \frac{-1,d}{\bar{\rho}_j} \right) =-1\; \text{and}\; \left( \frac{\pm\epsilon_{0},d}{\rho_j}\right) \neq\left( \frac{\pm\epsilon_{0},d}{\bar{\rho}_j}\right)\end{array}$$
 thus $r^{*}=0$,  so $$r_2(H) = \mu + r^{*} -3 = t+2s+1 + 0-3=t+2s-2.$$

 \item If $\left(\frac{p_{i}}{l}\right)=\left(\frac{q_{j}}{l}\right)=1$,  for all $i=1,  \dots,  t$ and for all $j=1,  \dots,  s$,  then   the prime ideals of  $k$ which ramifies in $\mathbb{K}$ are $ (\sqrt{2})$,   $\wp_{i}$,   $\bar{\wp}_{i}$,   $\rho_{j}$ and $\bar{\rho}_{j}$,   where $p_i\mathcal{O}_k=\wp_{i}\bar{\wp}_{i}$ and $q_j\mathcal{O}_k=\rho_{j}\bar{\rho}_{j}$,  i.e. $\mu= 2t+2s+1$. As above we have:
 $$\begin{array}{ll} \left(\frac{-1,d}{\rho_j} \right)=\left( \frac{-1,d}{\bar{\rho}_j} \right) =-1\; \text{and}\; \left( \frac{\pm\epsilon_{0},d}{\rho_j}\right) \neq\left( \frac{\pm\epsilon_{0},d}{\bar{\rho}_j}\right)\end{array}$$
 thus $r^{*}=0$,  so  $$r_2(H) = \mu + r^{*} -3 = 2t+2s+1 + 0-3=2t+2s-2.$$
\end{enumerate}
In general,    if  $\prod _{j=1}^{j=s}q_{j}=\prod _{i=1}^{i=s_{1}}q_{i}\prod _{j=1}^{j=s_{2}}q_{j}$,  with  $q_{i}\equiv q_{j}\equiv3\,  \pmod4$ and $\left(\frac{q_{i}}{l}\right)=-\left(\frac{q_{j}}{l}\right)=-1,   i\in\{1,  \dots,  s_{1}\},   j\in\{1,  \dots,  s_{2}\}$,  then $r^{*}=0$ and $$r_2(H)= h+s_{1}+2s_{2} + 1 + 0 -3=h+s_{1}+2s_{2}-2, $$ where  $h$ is always the number of  prime divisors of all the $p_{i}$'s,     $p_{i}\equiv1\,  \pmod4,   i\in\{1,  \dots,  t\}$ in  $k$.
\end{proof}
\section{Applications}\label{4}
In this section,  we will determine the integers  $n$ such that  $r_2(H)$,  the rank of the $2$-class group  $H$ of $\mathbb{K}=\mathbb{Q}(\sqrt{n\epsilon_{0}\sqrt{l}})$,   is equal to $0$,  $1$,  $2$ or $3$. For this we adopt the following notations: $p$ and $p_i$ (resp. $q$ and $q_i$),  $i\in\NN^*$,  are prime integers  congruent to $1$ (resp. $3$) modulo $4$. $\delta=1$ or $2$. The following theorems are simple deductions from the results of  previous subsections.  For all the examples below,  we use PARI/GP calculator version 2.11.2 (64bit), April 28, 2019.
\subsection{Case $l\equiv 1\pmod8$}	
\begin{them}\label{16}
	Let $\mathbb{K}=\mathbb{Q}(\sqrt{n\epsilon_{0}\sqrt{l}})$ be a real cyclic quartic number field,  where $l\equiv1\pmod8$ is a positive prime integer,  $n$ a square-free positive  integer relatively prime to $l$ and  $\epsilon_{0}$ the fundamental unit of  $k=\mathbb{Q}(\sqrt{l})$. The class number of $\mathbb{K}$ is odd if,  and only if $n=1$.
\end{them}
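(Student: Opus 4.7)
The plan is to reduce the statement to one about the $2$-rank: since $H$ is the $2$-Sylow subgroup of the ideal class group of $\mathbb{K}$, the class number $h(\mathbb{K})$ is odd if and only if $H$ is trivial, which in turn is equivalent to $r_2(H) = 0$. So I aim to prove the sharper equivalence $r_2(H) = 0 \iff n = 1$.

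For the ``if'' direction I would directly invoke the $n=1$ theorem in subsection~3.1, where it was already shown that $\mu = 1$ and $r^{*} = 2$ (via the Hilbert symbols $\left(\tfrac{-1,d}{(\sqrt l)}\right)$ and $\left(\tfrac{\epsilon_0,d}{(\sqrt l)}\right)$, both equal to $1$), giving $r_2(H) = 1 + 2 - 3 = 0$.

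For the converse I would exploit the very partition of admissible $n$ used throughout Section~\ref{2}. Writing $n = \delta \prod_i p_i \prod_j q_j$ with $\delta \in \{1,2\}$, $p_i \equiv 1 \pmod 4$ and $q_j \equiv 3 \pmod 4$, the hypothesis $n \neq 1$ forces either $\delta = 2$ or the presence of at least one prime factor. I would then read off from each theorem in Sections~3.2--3.8 the minimum possible value of $r_2(H)$ and observe that it is at least $1$ in every case. For example: $r_2(H) \geq 1$ when $n = 2$; $r_2(H) \geq t \geq 1$ when $n$ is a nontrivial product of primes $\equiv 1 \pmod 4$; $r_2(H) \geq t + 1 \geq 2$ for $n = 2$ times such a product; $r_2(H) \geq t \geq 1$ when $n$ involves an odd number of primes $\equiv 3 \pmod 4$; $r_2(H) \geq t - 1 + 2(\delta - 1) \geq 1$ for an even positive number of such primes; and the mixed cases in Sections~3.7 and~3.8 likewise exhibit strictly positive lower bounds. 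Combining these case checks yields the contrapositive of the ``only if'' direction.

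The main obstacle is purely combinatorial book-keeping: verifying that every admissible $n \neq 1$ falls into exactly one of the enumerated cases of Section~\ref{2} and that the minimum value of $r_2(H)$ extracted there is indeed positive. No fresh arithmetic input is required, since all the underlying Hilbert-symbol and $r^{*}$ computations have already been carried out in the preceding subsections; the present theorem is in essence a clean corollary of the cumulative classification.
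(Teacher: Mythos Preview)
Your proposal is correct and matches the paper's approach exactly: the paper gives no separate proof of this theorem, declaring instead that ``the following theorems are simple deductions from the results of previous subsections,'' which is precisely the case-by-case extraction of lower bounds for $r_2(H)$ from Sections~3.1--3.8 that you outline. Your bookkeeping is sound (in particular, the potentially delicate even-$t$ case of Section~3.6 with $\delta=1$ gives $r_2(H)=t-1\geq 1$ since $t\geq 2$, and the mixed subcases there and in Section~3.8 require $t_1,t_2\geq 1$, forcing the displayed expressions to be positive).
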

\begin{exams}
	For $n=1$ and $l=257\equiv1 \pmod8$, we have the class number of the class group $H$ of $\mathbb{K}=\mathbb{Q}(\sqrt{\epsilon_{0}\sqrt{l}})$ is $3$.
\end{exams}
\begin{them}\label{9}
	Let $\mathbb{K}=\mathbb{Q}(\sqrt{n\epsilon_{0}\sqrt{l}})$ be a real cyclic quartic number field,  where $l\equiv1\pmod8$ is a prime,  $n$ a square-free positive  integer relatively prime to $l$ and  $\epsilon_{0}$ the fundamental unit of  $k=\mathbb{Q}(\sqrt{l})$.
	$H$ is cyclic  if,  and only if one of the following assertions holds:
	\begin{enumerate}[\rm1.]
		\item $n= p$ and
		\begin{enumerate}[\rm i.]
			\item either $(\frac{p}{l})=-1$,
			\item or $(\frac{p}{l})= 1$ and $\left(\frac{p}{l}\right)_{_{4}}\neq\left(\frac{l}{p}\right)_{_{4}} $.
		\end{enumerate}
		\item $n= 2$ and $\left( \frac{2}{l}\right) _{4}\neq (-1)^{\frac{l-1}{8}}$.
		\item $n= \delta q$ and $(\frac{q}{l})=-1$.
		\item $n=q_{1}q_{2}$ and $(\frac{q_{1}}{l})=-1$ or $(\frac{q_{2}}{l})=-1$. 	
	\end{enumerate}
\end{them}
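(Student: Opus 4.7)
The plan is an enumerative verification. By Theorem \ref{16}, the $2$-class group $H$ is trivial exactly when $n=1$, so "$H$ cyclic" in this theorem really means $r_2(H)=1$. Hence the task reduces to sweeping through each shape of $n$ treated in \S\ \ref{2} and extracting precisely those subcases whose rank formula equals $1$.

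First I would handle the short cases. From \S 3.1, $n=1$ gives $r_2=0$ and is excluded. From \S 3.2, $n=2$ gives $r_2=1$ exactly when $\left(\frac{2}{l}\right)_4 \neq (-1)^{(l-1)/8}$, which is item 2. For $n=p$ with $p\equiv 1\pmod 4$ (so $t=1$ in the theorem of \S 3.3), the subcase $\left(\frac{p}{l}\right)=-1$ gives $r_2=t=1$, while the subcase $\left(\frac{p}{l}\right)=1$ gives $r_2=2t-1=1$ precisely when $\left(\frac{p}{l}\right)_{4}\neq\left(\frac{l}{p}\right)_{4}$; these two subcases together form item 1. For $n=\delta q$ with $q\equiv 3\pmod 4$ and $\delta\in\{1,2\}$ (so $t=1$ in Theorem \ref{19}), case 1 yields $r_2=t=1$ and case 2 yields $r_2=2t=2$, producing item 3.

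Next, for $n=q_1 q_2$ with $q_1,q_2\equiv 3\pmod 4$, I would apply the $t=2$ theorem of \S 3.6 with $\delta=1$. If both $\left(\frac{q_i}{l}\right)=-1$ then $r_2=t-1=1$, and in the mixed subcase $\left(\frac{q_1}{l}\right)=-\left(\frac{q_2}{l}\right)=-1$ the formula gives $t_1+2t_2-2=1+2-2=1$; the remaining subcase (both $+1$) gives $r_2=2$. Consolidating, the single condition "$\left(\frac{q_1}{l}\right)=-1$ or $\left(\frac{q_2}{l}\right)=-1$" characterizes rank one, which is item 4.

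The remaining and more laborious step is to rule out every other shape of $n$. Here I would scan the leftover subcases: \S 3.4 (all formulas for $n=2\prod p_i$ are $\geq t+1\geq 2$); \S 3.3 with $t\geq 2$ (formulas $t$ or $2t-1$, $2t$, all $\geq 2$); \S 3.5 with $t\geq 3$ odd (formulas $t,2t$, both $\geq 3$); \S 3.6 with $\delta=2$ or $t\geq 4$ even (formulas $t+1,2t,t_1+2t_2,\ldots$, all $\geq 3$, and for $t=2,\delta=2$ one gets $r_2=3$); and \S 3.7–3.8, where both $p_i$'s and $q_j$'s are present and the formulas $h+s,\ h+2s,\ h+s_1+2s_2,$ etc., are all $\geq 2$ since $h\geq 1$ and $s\geq 1$. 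The main obstacle is not mathematical depth but careful bookkeeping: one must verify, for each subcase of Theorems in \S 3.3–\S 3.8, that the rank formula either strictly exceeds $1$ or reduces to $1$ only in the configurations already listed in items 1–4. A tidy way to organise this is to lower-bound each formula by a monotone expression in $(t_1,t_2,s_1,s_2,\delta)$ and read off when the bound can equal $1$; equality then forces the small-parameter configurations enumerated above.
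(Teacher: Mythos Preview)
Your proposal is correct and follows exactly the approach the paper intends: the paper gives no separate proof for this theorem, stating only that ``the following theorems are simple deductions from the results of previous subsections,'' and your case-by-case sweep through the rank formulas of \S\ref{2} is precisely that deduction carried out in full. Your bookkeeping for the exclusion step (bounding each formula below and checking that equality with $1$ forces the listed configurations) is accurate.
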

\begin{exams}~\
	\begin{enumerate}[\rm1.]
		\item	For $n=p=89\equiv 1\pmod4$ and $l=41\equiv1 \pmod8$,  $(\frac{p}{l})=-1$ and $H$ is cyclic of order $2$. 	
		 For $n=p=97\equiv1\pmod4$ and $l=89\equiv1\pmod8$,   $\left(\frac{p}{l}\right)_{_{4}}=-\left(\frac{l}{p}\right)_{_{4}}=1 $ and $H$ is cyclic of order $2$.
		\item For $n=2$ and  and $l=1913\equiv1\pmod8$,  $\left( \frac{2}{l}\right) _{4}= -(-1)^{\frac{l-1}{8}}=1$ and $H$ is cyclic of order $2$.
		\item For $n=q=83\equiv3\pmod4$ and $l=137\equiv1\pmod8$,  $(\frac{q}{l})=-1$ and $H$ is cyclic of order $2$.
		 For $n=2q=2.83\equiv2\pmod4$ and $l=97\equiv1\pmod8$,  $(\frac{q}{l})=-1$ and $H$ is cyclic of order $2$.
		\item For $n=q_1q_2=71.83\equiv1\pmod4$ and $l=97\equiv1\pmod8$,  $(\frac{q_{1}}{l})=(\frac{q_{2}}{l})=-1$ and $H$ is cyclic of order $2$.
		 For $n=q_1q_2=79.83\equiv1\pmod4$ and $l=41\equiv1\pmod8$,  $(\frac{q_{1}}{l})=-(\frac{q_{2}}{l})=-1$ and $H$ is cyclic of order $2$.
	\end{enumerate}
\end{exams}
\begin{them}\label{10}
	Let $\mathbb{K}=\mathbb{Q}(\sqrt{n\epsilon_{0}\sqrt{l}})$ be a real cyclic quartic number field,  where $l\equiv1\pmod8$ is a prime,  $n$ a square-free positive  integer relatively prime to $l$ and  $\epsilon_{0}$ the fundamental unit of  $k=\mathbb{Q}(\sqrt{l})$.
	The rank $r_2(H)$ equals $2$  if,  and only if $n$  takes one of the following forms.
	\begin{enumerate}[\rm1.]
		\item $n= p_{1}p_{2}$ and
		\begin{enumerate}[\rm i.]
			\item either $(\frac{p_{i}}{l})= -1$  for all $i\in\{1, 2\}$
			\item or $(\frac{p_{i}}{l})=-(\frac{p_{j}}{l})=-1$ and $\left(\frac{p_j}{l}\right)_{_{4}}\neq\left(\frac{l}{p_j}\right)_{_{4}}$, $i\neq j\in\{1, 2\}$.
		\end{enumerate}
		\item $n=p$, $(\frac{p}{l})=1$ and $\left(\frac{p}{l}\right)_{_{4}}=\left(\frac{l}{p}\right)_{_{4}} $.
		\item $n=2p$, $(\frac{p}{l})=-1$  and $\left( \frac{2}{l}\right) _{4}\neq (-1)^{\frac{l-1}{8}}$.
		\item $n=2$ and  $\left( \frac{2}{l}\right) _{4}= (-1)^{\frac{l-1}{8}}$.
		\item $n=\delta q$  and  $(\frac{q}{l})=1$.
		\item $n=q_1q_2$  and $(\frac{q_1}{l})=(\frac{q_2}{l})=1$.
		\item $n=\delta pq$ and   $(\frac{p}{l})=(\frac{q}{l})=-1$.
		\item $n=pq_{1}q_{2}$, $(\frac{p}{l})=-1$ and  $(\frac{q_1}{l})=-1$ or  $(\frac{q_2}{l})=-1$.				
	\end{enumerate}
\end{them}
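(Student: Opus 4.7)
The plan is to sweep through every possible shape of $n$, apply the relevant theorem of Section~\ref{2}, and solve the resulting formula for the condition $r_2(H)=2$. Because $n$ is square-free, coprime to $l$, and positive, it decomposes uniquely as $n=\delta\prod p_i\prod q_j$ with $\delta\in\{1,2\}$, $p_i\equiv 1\pmod 4$ and $q_j\equiv 3\pmod 4$, so every configuration falls under exactly one of the theorems of subsections 3.1--3.8.

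First I would handle the small cases one at a time, substituting $t$ and $s$ equal to $1$ or $2$ into the appropriate formula: the shapes $n=1$, $n=p$, $n=2$, $n=q$ (one odd prime), then $n=p_1p_2$, $n=2p$, $n=2q$, $n=q_1q_2$, $n=pq$, $n=2pq$ (two odd primes), and finally $n=pq_1q_2$ (three odd primes). In each case the cited theorem gives $r_2(H)$ as an explicit linear combination of the relevant parameters, and the equation $r_2(H)=2$ becomes either an automatic identity or a constraint on the residue symbols $\left(\tfrac{p}{l}\right)$, $\left(\tfrac{q}{l}\right)$, $\left(\tfrac{p}{l}\right)_4$, $\left(\tfrac{l}{p}\right)_4$, $\left(\tfrac{2}{l}\right)_4$, and $(-1)^{(l-1)/8}$. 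Reading these constraints off case by case produces items~1--8 of the statement.

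Second, I must prove that the list is exhaustive. The plan is to extract, from the formulas of subsections~\ref{sec35}--3.8, a uniform lower bound: every inert odd prime factor contributes at least $1$ to $\mu-(r+c+1)+r^{*}$, while every split odd prime contributes at least $2$ (since it produces two distinct prime ideals of $k$), and the factor $\delta=2$ adds a further $2$ via $\mathfrak{2}_1\mathfrak{2}_2$. Combined with the constant offset from $(\sqrt{l})$, this forces $r_2(H)\geq 3$ whenever $n$ has four or more prime factors, as well as for three odd prime factors outside the specific shape $n=pq_1q_2$ with $p$ inert and at least one $q_j$ inert. So no further configurations survive.

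The main obstacle is purely bookkeeping rather than conceptual: one must carefully track the parity of the number of primes congruent to $3\pmod 4$ (which selects between the theorems of subsections~\ref{sec35} and 3.6, and between 3.7 and 3.8), the presence or absence of the factor $2$ (which controls the condition $\left(\tfrac{2}{l}\right)_4=(-1)^{(l-1)/8}$), and the quartic condition $\left(\tfrac{p}{l}\right)_4=\left(\tfrac{l}{p}\right)_4$ for each split prime $p\equiv 1\pmod 4$. Once the lower bound above is in place, the enumeration is mechanical and the statement follows by direct verification.
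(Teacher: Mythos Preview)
Your proposal is correct and follows exactly the route the paper intends: the paper itself gives no argument for this theorem beyond the remark that Theorems~\ref{16}--\ref{11} are ``simple deductions from the results of previous subsections,'' followed by numerical examples. Your plan to sweep through the shapes of $n$, plug into the formulas of subsections~3.1--3.8, and solve $r_2(H)=2$ is precisely that deduction made explicit, and your exhaustiveness step (bounding $r_2(H)$ from below once $n$ has too many prime factors) is the natural way to close the argument.

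One small caveat on your heuristic: the claim that ``the factor $\delta=2$ adds a further $2$ via $\mathfrak{2}_1\mathfrak{2}_2$'' is not uniformly true. When the number of primes $q_j\equiv 3\pmod 4$ dividing $n$ is odd (subsections~\ref{sec35} and~3.7), the conductor already picks up the factor $8$ even for $\delta=1$, so $\mathfrak{2}_1,\mathfrak{2}_2$ ramify regardless and passing to $\delta=2$ changes neither $\mu$ nor $r_2(H)$. This is why, for instance, item~5 of the statement reads $n=\delta q$ and item~7 reads $n=\delta pq$ with the same residue conditions for both values of $\delta$. Since you are ultimately reading off the explicit formulas of the theorems rather than relying on the per-prime heuristic, this does not create a gap; just be sure your lower-bound step quotes those formulas directly rather than the informal contribution count.
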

\begin{exams}~\
	\begin{enumerate}[\rm1.]
		\item	For $n=p_1p_2=89.97\equiv1\pmod4$ and $l=41\equiv1\pmod8$, we have $(\frac{p_{1}}{l})=(\frac{p_{2}}{l})= -1$ and $H$ is  of type $(2, 2)$. For $n=p_1p_2=97.89\equiv1\pmod4$ and $l=17\equiv1\pmod8$, we have $(\frac{p_{1}}{l})=-(\frac{p_{2}}{l})=-1$ and $\left(\frac{p_2}{l}\right)_{_{4}}=-\left(\frac{l}{p_2}\right)_{_{4}}=1 $, $H$ is  of type $(2, 2)$.
		\item $n=p=613\equiv1\pmod4$ and $l=17\equiv1\pmod8$, we have $(\frac{p}{l})=1$,  $\left(\frac{p}{l}\right)_{_{4}}=\left(\frac{l}{p}\right)_{_{4}}=1 $ and $H$ is  of type $(4,4) $.
		\item $n=2p=2.1994\equiv2\pmod4$ and $l=1753\equiv1\pmod8$, we have $(\frac{p}{l})=-1$,  $\left( \frac{2}{l}\right) _{4}=-(-1)^{\frac{l-1}{8}}=1$ and $H$ is  of type $(2,2) $.
		\item For $n=2$ and  and $l=1889\equiv1\pmod8$, we have $\left( \frac{2}{l}\right) _{4}= (-1)^{\frac{l-1}{8}}=1$ and $H$ is  of type $(2,4)$.
		\item For $n=q=79\equiv3\pmod4$ and $l=97\equiv1\pmod8$, we have $(\frac{q}{l})=1$ and $H$ is  of type $(2,2) $.
		 For $n=2q=2.71\equiv2\pmod4$ and $l=73\equiv1\pmod8$, we have $(\frac{q}{l})=1$ and $H$ is  of type $(2,4) $.
		\item For $n=q_1q_2=47.67\equiv1\pmod4$ and $l=17\equiv1\pmod8$, we have $(\frac{q_{1}}{l})=(\frac{q_{2}}{l})=1$ and $H$ is  of type $(2,4) $.
		\item For $n=pq=73.79\equiv3\pmod4$ and $l=17\equiv1\pmod8$, we have $(\frac{p}{l})=(\frac{q}{l})=-1$ and $H$ is  of type $(2,2) $.
		 For $n=2pq=2.41.79\equiv2\pmod4$ and $l=17\equiv1\pmod8$, we have $(\frac{p}{l})=(\frac{q}{l})=-1$ and $H$ is  of type $(2,2) $.
		\item For $n=pq_{1}q_{2}=97.71.79\equiv1\pmod4$ and $l=41\equiv1\pmod8$, we have $(\frac{p}{l})=(\frac{q_{1}}{l})=(\frac{q_{2}}{l})=-1$ and $H$ is of type $(2,2) $.
		For $n=pq_{1}q_{2}=97.79.83\equiv1\pmod4$ and $l=41\equiv1\pmod8$, we have $(\frac{p}{l})=(\frac{q_{1}}{l})=-(\frac{q_{2}}{l})=-1$ and $H$ is of type $(2,2) $.			
	\end{enumerate}
\end{exams}
\begin{them}\label{11}
	Let $\mathbb{K}=\mathbb{Q}(\sqrt{n\epsilon_{0}\sqrt{l}})$ be a real cyclic quartic number field,  where $l\equiv1\pmod8$ is a prime,  $n$ a square-free positive  integer relatively prime to $l$ and  $\epsilon_{0}$ the fundamental unit of  $k=\mathbb{Q}(\sqrt{l})$.
	The rank $r_2(H)$ equals $3$  if  and only if $n$  takes one of the following forms.
	\begin{enumerate}[\rm1.]
\item $n=2p$ and one of the following cases holds:
		\begin{enumerate}[\rm i.]
			\item    $(\frac{p}{l})=-1$ and $\left( \frac{2}{l}\right) _{4}= (-1)^{\frac{l-1}{8}}$ ,
			\item   $(\frac{p}{l})=1$ and $\left( \frac{2}{l}\right)_{4}\neq (-1)^{\frac{l-1}{8}}$ or $\left(\frac{p}{l}\right)_{_{4}}\neq \left(\frac{l}{p}\right)_{_{4}} $.
		\end{enumerate}
		\item $n= p_1p_2$ and
		\begin{enumerate}[\rm i.]
			\item either  $(\frac{p_i}{l})=1, \; \text{for all}\; i\in\{1,  2\}$ and $\left(\frac{p_{i}}{l}\right)_{_{4}}\neq\left(\frac{l}{p_{i}}\right)_{_{4}} $ for at least one $i\in\{1,  2\}$ .
			\item or $(\frac{p_1}{l})=-(\frac{p_2}{l})=-1$  and $\left(\frac{p_{2}}{l}\right)_{_{4}}=\left(\frac{l}{p_{2}}\right)_{_{4}} $.
		\end{enumerate}
		\item $n=2p_1p_2$, $(\frac{p_i}{l})=-1$  for all $i\in\{1, 2\}$ and $\left( \frac{2}{l}\right) _{4}\neq (-1)^{\frac{l-1}{8}}$.
		\item $n=p_1p_2p_3$ and
		\begin{enumerate}[\rm i.]
			\item either $(\frac{p_i}{l})=-1$  for all $i\in\{1, 2, 3\}$,
			\item or $(\frac{p_1}{l})=(\frac{p_2}{l})=-(\frac{p_3}{l})=-1$ and $\left(\frac{p_{3}}{l}\right)_{_{4}}\neq\left(\frac{l}{p_{3}}\right)_{_{4}} $.
		\end{enumerate}
\item $n= \delta q_1q_2q_3$ and $(\frac{q_i}{l})=-1$  for all $i\in\{1,  2,  3\}$
		\item $n=q_1q_2q_3q_4$ and there exist at most one of symbols $(\frac{q_i}{l})$ for $i\in\{1, 2, 3, 4\}$ equal $1$.
		\item $n=2q_1q_2$ and $(\frac{q_i}{l})=-1$  for at least one  $i\in\{1,  2\}$.
		\item $n=\delta p_1p_2q$,   $(\frac{p_i}{l})=-1$ for all $i\in\{1,  2\}$ and $(\frac{q}{l})=-1$.
		\item $n=\delta pq$ and $(\frac{p}{l})\neq(\frac{q}{l})$.
		\item $n=p_1p_2q_1q_2$,  $(\frac{p_i}{l})=-1$  for all $i\in\{1, 2\}$ and  $(\frac{q_i}{l})=1$ for at most one  $i\in\{1,  2\}$.
		\item $n= pq_1q_2$ and one of the following cases holds:
		\begin{enumerate}[\rm i.]
			\item  $(\frac{p}{l})=1$ and at most one of the  symbols $(\frac{q_i}{l})$,  $i\in\{1,  2\}$,   is $1$.
		\item   $(\frac{p}{l})=-1$ and  $(\frac{q_i}{l})=1$  for all $i\in\{1,  2\}$.
         \end{enumerate}	
	\end{enumerate}
\end{them}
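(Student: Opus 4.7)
The plan is to prove this theorem by direct reduction to the explicit rank formulas established in Section \ref{2}. Since $n$ is a square-free positive integer coprime to $l$, it has a unique decomposition $n = \delta \prod_{i=1}^{t} p_i \prod_{j=1}^{s} q_j$ with $\delta \in \{1,2\}$, the $p_i$ primes congruent to $1 \pmod 4$, and the $q_j$ primes congruent to $3 \pmod 4$. Each such shape is covered by exactly one of the eight theorems of Section \ref{2}, every one of which supplies a closed formula for $r_2(H)$ in terms of $t$, $s$, $\delta$, the Legendre symbols $(p_i/l)$ and $(q_j/l)$, the rational biquadratic symbols $(p_i/l)_4$, $(l/p_i)_4$, and (when $\delta=2$) the comparison between $(2/l)_4$ and $(-1)^{(l-1)/8}$.

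First I would dispose of the small cases $n=1$ and $n=2$: Theorems 3.1.1 and 3.2.1 show that $r_2(H)\le 2$ there, so neither contributes. Next I would scan the single-type shapes. For $n = \prod p_i$ and $n = 2\prod p_i$ (subsections 3.3 and 3.4), I set the relevant formula equal to $3$ and solve: $t=3$ with $(p_i/l)=-1$ gives case (4)(i); $2t-1=3$ with $(p_i/l)_4\ne(l/p_i)_4$ for some $i$ gives case (2)(i); the mixed formula $t_1+2t_2=3$ or $t_1+2t_2-1=3$ yields the remaining subcases of (2) and (4); and the $\delta=2$ table of Theorem 3.4 produces cases (1) and (3). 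Then the shapes $n = \delta \prod q_j$ handled by 3.5 and 3.6 give cases (5), (6), (7) after solving $t=3$, $t-1+2(\delta-1)=3$, $2t-2+2(\delta-1)=3$, and the analogous mixed equations. Finally the genuinely mixed shapes $s,t\ge 1$ are handled by Theorems 3.7 and 3.8; equating $h+s$, $h+2s$, $h+s-1+2(\delta-1)$, $h+2s-2+2(\delta-1)$ and the corresponding mixed expressions to $3$ produces cases (8)--(11), with $h$ replaced in each subcase by either $t$ or $2t$ according to whether $(p_i/l)=-1$ or $(p_i/l)=1$.

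The converse direction, namely that each listed $n$ actually satisfies $r_2(H)=3$, is immediate by plugging the prescribed parameters back into the appropriate formula, so no further work is needed for it. The step that requires the most care is verifying \emph{exhaustiveness}: I must make sure no combination of $(t,s,\delta)$ and symbol pattern with $r_2(H)=3$ has been omitted. To keep the bookkeeping manageable, I would prepare a single table indexed by the four data $(\delta, t, s, \text{symbol-pattern class})$ that names the governing theorem of Section \ref{2} and its formula, and then sweep through every nonnegative integer solution of the equations listed above. Since each formula is linear in the relevant integer parameters and bounded below on the relevant cases, only finitely many $(t,s)$ can contribute and the enumeration terminates, giving precisely the eleven families stated. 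The proof is thus purely combinatorial bookkeeping on top of the arithmetic work already performed in Section \ref{2}.
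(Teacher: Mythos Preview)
Your proposal is correct and follows essentially the same approach as the paper: the paper itself offers no separate proof of this theorem, declaring at the start of \S\ref{4} that all the application theorems are ``simple deductions from the results of previous subsections,'' i.e.\ exactly the case-by-case inversion of the rank formulas of \S\ref{2} that you outline. Your write-up is in fact more explicit than the paper's, which leaves the bookkeeping entirely to the reader.
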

\begin{exams}~\
	\begin{enumerate}[\rm1.]
		\item For $n=p_{1}p_{2}p_{3}=37.41.61\equiv1\pmod4$ and $l=89\equiv1\pmod8$, we have $(\frac{p_{1}}{l})=(\frac{p_{2}}{l})=(\frac{p_{3}}{l})=-1$ and $H$ is of type $(2,2,2)$.
		 For $n=p_{1}p_{2}p_{3}=89.97.73\equiv1\pmod4$ and $l=41\equiv1\pmod8$, we have $(\frac{p_{1}}{l})=(\frac{p_{2}}{l})=-(\frac{p_{3}}{l})=-1$ and $\left(\frac{p_{3}}{l}\right)_{_{4}}=-\left(\frac{l}{p_{3}}\right)_{_{4}}=-1 $, $H$ is of type $(2,2,2)$
		\item	For $n=p_{1}p_{2}=89.97\equiv1\pmod4$ and $l=73\equiv1\pmod8$, we have $(\frac{p_{1}}{l})=(\frac{p_{2}}{l})=1$ and $\left(\frac{p_{2}}{l}\right)_{_{4}}=-\left(\frac{l}{p_{2}}\right)_{_{4}}=-1 $,  $H$ is of type $(2,4,4)$. 
		For $n=p_{1}p_{2}=61.73\equiv1\pmod4$ and $l=89\equiv1\pmod8$, we have $(\frac{p_{1}}{l})=-(\frac{p_{2}}{l})=-1$ and $\left(\frac{p_{2}}{l}\right)_{_{4}}=\left(\frac{l}{p_{2}}\right)_{_{4}}=1 $,  $H$ is of type $(2,2,4)$
		\item For $n=2p=2.97\equiv2\pmod4$ and $l=41\equiv1\pmod8$, we have $(\frac{p}{l})=-1$ and $\left( \frac{2}{l}\right) _{4}= (-1)^{\frac{l-1}{8}}=-1$,  $H$ is of type $(2,2,2)$.
		For $n=2p=2.97\equiv2\pmod4$ and $l=89\equiv1\pmod8$, we have $(\frac{p}{l})=1$, $\left( \frac{2}{l}\right) _{4}=- (-1)^{\frac{l-1}{8}}=1$ and $\left(\frac{p}{l}\right)_{_{4}}=- \left(\frac{l}{p}\right)_{_{4}}=1 $,  $H$ is of type $(2,4,4)$
		\item For $n=2p_{1}p_{2}=2.73.97\equiv1\pmod4$ and $l=17\equiv1\pmod8$, we have $(\frac{p_{1}}{l})=(\frac{p_{2}}{l})=-1$ and $\left( \frac{2}{l}\right) _{4}=- (-1)^{\frac{l-1}{8}}=-1$,  $H$ is of type $(2,2,2)$
		\item For $n=q_{1}q_{2}q_{3}=67.71.83\equiv3\pmod4$ and $l=97\equiv1\pmod8$, we have $(\frac{q_{1}}{l})=(\frac{q_{2}}{l})=(\frac{q_{3}}{l})=-1$ and $H$ is of type $(2,2,2)$. 
		For $n=2q_{1}q_{2}q_{3}=59.67.83\equiv2\pmod4$ and $l=97\equiv1\pmod8$, we have $(\frac{q_{1}}{l})=(\frac{q_{2}}{l})=(\frac{q_{3}}{l})=-1$ and $H$ is of type $(2,2,2)$
		\item For $n=q_{1}q_{2}q_{3}q_{4}=11.23.31.7\equiv1\pmod4$ and $l=17\equiv1\pmod8$, we have $(\frac{q_{1}}{l})=(\frac{q_{2}}{l})=(\frac{q_{3}}{l})=(\frac{q_{3}}{l})=-1$ and $H$ is of type $(2,2,2)$.
		\item For $n=2q_{1}q_{2}=71.83\equiv2\pmod4$ and $l=97\equiv1\pmod8$, we have $(\frac{q_{1}}{l})=(\frac{q_{2}}{l})=-1$ and $H$ is of type $(2,2,4)$.
		For $n=2q_{1}q_{2}=71.83\equiv2\pmod4$ and $l=41\equiv1\pmod8$, we have $(\frac{q_{1}}{l})=-(\frac{q_{2}}{l})=-1$ and $H$ is of type $(2,2,2)$
		\item For $n=p_{1}p_{2}q=89.97.79\equiv3\pmod4$ and $l=41\equiv1\pmod8$, we have $(\frac{p_{1}}{l})=(\frac{p_{2}}{l})=(\frac{q}{l})=-1$ and $H$ is of type $(2,2,2)$.
		For $n=2p_{1}p_{2}q=2.89.97.79\equiv3\pmod4$ and $l=41\equiv1\pmod8$, we have $(\frac{p_{1}}{l})=(\frac{p_{2}}{l})=(\frac{q}{l})=-1$ and $H$ is of type $(2,2,2)$
		\item For $n=pq=61.47\equiv3\pmod4$ and $l=17\equiv1\pmod8$, we have $(\frac{p}{l})=-(\frac{q}{l})=-1$ and $H$ is of type $(2,4,8)$. 
		For $n=2pq=2.97.47\equiv2\pmod4$ and $l=17\equiv1\pmod8$, we have $(\frac{p}{l})=-(\frac{q}{l})=1$ and $H$ is of type $(2,2,2)$
		\item For $n=p_{1}p_{2}q_{1}q_{2}=61.73.71.79\equiv1\pmod4$ and $l=17\equiv1\pmod8$, we have $(\frac{p_{1}}{l})=(\frac{p_{2}}{l})=(\frac{q_{1}}{l})=(\frac{q_{2}}{l})=-1$ and $H$ is of type $(2,2,2)$.
		For $n=p_{1}p_{2}q_{1}q_{2}=61.73.67.79\equiv1\pmod4$ and $l=17\equiv1\pmod8$, we have $(\frac{p_{1}}{l})=(\frac{p_{2}}{l})=-(\frac{q_{1}}{l})=(\frac{q_{2}}{l})=-1$ and $H$ is of type $(2,2,2)$
		\item For $n=pq_{1}q_{2}=97.83.79\equiv1\pmod4$ and $l=89\equiv1\pmod8$, we have $(\frac{p}{l})=-(\frac{q_{1}}{l})=(\frac{q_{2}}{l})=1$ and $H$ is of type $(2,2,2)$. 
	\end{enumerate}
\end{exams}
 \subsection{Case $l=2$}
 \begin{them}\label{12}
	Let $\mathbb{K}=\mathbb{Q}(\sqrt{n\epsilon_{0}\sqrt{2}})$ be a real cyclic quartic number field,   $n$ an odd square-free positive  integer  and  $\epsilon_{0}$ the fundamental unit of  $k=\mathbb{Q}(\sqrt{2})$. The class number of $\mathbb{K}$ is odd if,  and only if one of the following assertions holds:
	\begin{enumerate}[\rm1.]
		\item $n=1$.
		\item $n$ is a prime integer congruent to $3\pmod4$.	
	\end{enumerate}
\end{them}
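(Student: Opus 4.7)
The class number of $\mathbb{K}$ is odd if and only if the $2$-class group $H$ is trivial, equivalently $r_2(H)=0$. Hence the plan is to sweep through the exhaustive list of cases treated in the four theorems of \S\ref{3} (the $l=2$ section) and extract those values of $n$ that force $r_2(H)$ to vanish; then verify that every other admissible shape of $n$ yields $r_2(H)\ge 1$.

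First, I handle the easy direction. For $n=1$, the first theorem of \S\ref{3} gives $r_2(H)=0$ directly. For $n=q$ with $q$ a single prime congruent to $3\pmod 4$, I apply the theorem covering $n=\prod_{i=1}^{t}p_i$ with $p_i\equiv 3\pmod 4$ in the case $t=1$: whether $\left(\tfrac{2}{q}\right)=-1$ (giving $r_2(H)=t-1=0$) or $\left(\tfrac{2}{q}\right)=1$ (giving $r_2(H)=2t-2=0$), both subcases land on $0$. These two situations cover the ``if'' part.

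For the converse, I go through the remaining cases enumerated in \S\ref{3}:
\begin{enumerate}[\rm(a)]
\item If $n=\prod_{i=1}^{t}p_i$ with all $p_i\equiv 1\pmod 4$ and $t\ge 1$, Theorem \ref{7} gives $r_2(H)\in\{t,2t,2t-1\}$, each at least $1$.
\item If $n=\prod_{i=1}^{t}p_i$ with all $p_i\equiv 3\pmod 4$ and $t\ge 2$, the corresponding theorem gives $r_2(H)\in\{t-1,2t-2\}\ge 1$.
\item If $n=\prod_{i=1}^{t}p_i\prod_{j=1}^{s}q_j$ is genuinely mixed ($t\ge 1$ and $s\ge 1$), the last theorem of \S\ref{3} gives
\[
r_2(H)\in\{t+s-1,\;2t+s-1,\;t+2s-2,\;2t+2s-2\},
\]
and solving each expression $=0$ under the constraints $t\ge 1$, $s\ge 1$ yields no solutions (for instance $t+s-1=0$ forces $t+s=1$, impossible). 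The same conclusion holds for the mixed subcases in which the primes $q_j$ split into two sub-families according to the value of $\left(\tfrac{2}{q_j}\right)$.
\end{enumerate}

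The main obstacle is purely bookkeeping: one must be careful that the enumeration of cases in \S\ref{3} is exhaustive over square-free positive $n$ coprime to $2$, which follows from grouping the prime divisors of $n$ by their residue mod $4$. Once this is granted, the argument reduces to a finite check that $r_2(H)=0$ is achieved exactly in the two situations listed in the theorem. The equivalence ``class number odd $\iff r_2(H)=0$'' together with this case-by-case vanishing completes the proof.
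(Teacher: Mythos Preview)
Your proposal is correct and is precisely the approach the paper intends: the paper states at the start of \S\ref{4} that these application theorems ``are simple deductions from the results of previous subsections'' and gives no separate proof of Theorem~\ref{12}, so your case sweep through the theorems of \S\ref{3} is exactly what is expected. The only cosmetic omission is that in items (a) and (b) you did not explicitly mention the ``Moreover'' mixed sub-cases (where the Legendre symbols $\left(\tfrac{2}{p_i}\right)$ take both values), but the same inequalities dispose of those as well.
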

\begin{exams}   For $n=q=59\equiv3\pmod4$, we have $\left( \frac{2}{q}\right) =-1$, $H$ has order $5$.
		 For $n=q=631\equiv3\pmod4$, we have $\left( \frac{2}{q}\right) =1$, $H$ has order $5$. 
\end{exams}
\begin{them}\label{13}
	Let $\mathbb{K}=\mathbb{Q}(\sqrt{n\epsilon_{0}\sqrt{2}})$ be a real cyclic quartic number field,  $n$ an odd square-free positive  integer  and  $\epsilon_{0}$ the fundamental unit of  $k=\mathbb{Q}(\sqrt{2})$.	$H$ is cyclic  if,  and only if one of the following assertions holds:
	\begin{enumerate}[\rm1.]
		\item $n=p$ and
		\begin{enumerate}[\rm a.]
			\item either $(\frac{2}{p})=-1$.
			\item or $(\frac{2}{p})=1$ and $ \left( \frac{2}{p}\right)_{4} \neq\left( \frac{p}{2}\right)_{4} $
		\end{enumerate}
		\item $n=q_{1}q_{2}$ and  $(\frac{2}{q_1})=-1$ or  $(\frac{2}{q_2})=-1$.
		\item $n=pq$ and $(\frac{2}{p})=-1$ .	
	\end{enumerate}
\end{them}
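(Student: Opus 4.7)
The plan is to derive Theorem~\ref{13} directly from the case-by-case formulas for $r_2(H)$ established in Section~\ref{3}. Since the trivial case $r_2(H)=0$ has already been completely characterised in Theorem~\ref{12}, the phrase ``$H$ is cyclic'' is to be read here as $r_2(H)=1$. Hence the whole proof reduces to enumerating the integers $n$ for which the formulas of Section~\ref{3} yield $r_2(H)=1$ and then checking that the disjunction of these conditions is exactly the one displayed in the theorem.

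First, for $n=\prod_{i=1}^{t}p_{i}$ with all $p_{i}\equiv 1\pmod 4$, Theorem~\ref{7} gives $r_2(H)\in\{t,\,2t-1,\,2t\}$, depending on the values of $\left(\frac{2}{p_i}\right)$ and, when these are all $+1$, on the biquadratic symbols $\left(\frac{2}{p_i}\right)_{4}$ and $\left(\frac{p_i}{2}\right)_{4}$. The equation $r_2(H)=1$ forces $t=1$, producing exactly the two subcases of item~1. Second, for $n=\prod_{i=1}^{t}q_{i}$ with all $q_{i}\equiv 3\pmod 4$, the corresponding theorem of Subsection~3.3 yields $r_2(H)\in\{t-1,\,2t-2,\,t_1+2t_2-2\}$, where $t_1$ (resp.\ $t_2$) counts the primes $q_i$ with $\left(\frac{2}{q_i}\right)=-1$ (resp.\ $+1$). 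Solving $r_2(H)=1$ forces $(t_1,t_2)\in\{(2,0),(1,1)\}$, which together say exactly that $n=q_1q_2$ with at least one of $\left(\frac{2}{q_1}\right)$, $\left(\frac{2}{q_2}\right)$ equal to $-1$, i.e.\ item~2.

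For a mixed factorisation $n=\prod_{i=1}^{t}p_{i}\prod_{j=1}^{s}q_{j}$, I would invoke the formulas of Subsection~3.4, which express $r_2(H)$ as $h+s-1$, $h+2s-2$, or $h+s_1+2s_2-2$, where $h\in\{t,2t\}$ is the number of prime ideals of $k$ above the $p_i$'s. Since every such expression is $\geq h$ and $h\geq 1$, the equation $r_2(H)=1$ forces $h=1$, hence $t=1$ and $\left(\frac{2}{p}\right)=-1$. With $h=1$ fixed, the three expressions reduce to $s$, $2s-1$ and $s_1+2s_2-1$, each of which equals $1$ only when $s=1$. This gives $n=pq$ with $\left(\frac{2}{p}\right)=-1$ regardless of the value of $\left(\frac{2}{q}\right)$, namely item~3. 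All remaining combinations ($h=2$, or $t+s\geq 3$) produce $r_2(H)\geq 2$ and contribute nothing.

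The remaining task is to check exhaustiveness and disjointness of items 1--3, which is immediate from the enumeration above. I expect the only delicate point to be the bookkeeping of the biquadratic symbols in item~1, where the subcase $\left(\frac{2}{p}\right)=+1$ yields $r_2(H)=1$ precisely via the formula $r_2(H)=2t-1$ under the extra condition $\left(\frac{2}{p}\right)_{4}\neq\left(\frac{p}{2}\right)_{4}$, together with the correct identification of $h$ with the number of prime ideals of $k=\mathbb{Q}(\sqrt 2)$ lying above $p$ in item~3.
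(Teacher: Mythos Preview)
Your proposal is correct and matches the paper's approach exactly: the paper states at the beginning of Section~\ref{4} that ``the following theorems are simple deductions from the results of previous subsections'' and gives no further proof for Theorem~\ref{13}, so your systematic enumeration through Theorems~\ref{7} and the two subsequent theorems of Section~\ref{3} is precisely what is intended. Your bookkeeping in each case (solving $t=1$, $(t_1,t_2)\in\{(2,0),(1,1)\}$, and $h=1$, $s=1$ respectively) is accurate, and the interpretation of ``cyclic'' as $r_2(H)=1$ is the correct one in this context.
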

\begin{exams}~\
	\begin{enumerate}[\rm1.]
		\item For $n=p=61\equiv1\pmod4$, we have $(\frac{2}{p})=-1$ and  $H$ is cyclic of order $2$. 
		For $n=p=89\equiv1\pmod4$,\, $(\frac{2}{p})=1$, $ \left( \frac{2}{p}\right)_{4}=-\left( \frac{p}{2}\right)_{4}=1 $ and $H$ is cyclic of order $2$ 	
		\item For $n=q_{1}q_{2}=59.83\equiv1\pmod4$,\, $(\frac{2}{q_1})=(\frac{2}{q_2})=-1$ and $H$ is cyclic of order $2$.
		For $n=q_{1}q_{2}=67.71\equiv1\pmod4$,\, $(\frac{2}{q_1})=-(\frac{2}{q_2})=-1$ and $H$ is cyclic of order $2$.
		\item For $n=pq=61.59\equiv3\pmod4$,\, $(\frac{2}{p})=(\frac{2}{q})=-1$ and $H$ is cyclic of order $2$.
		For $n=pq=61.47\equiv3\pmod4$,\, $(\frac{2}{p})=-(\frac{2}{q})=-1$ and $H$ is cyclic of order $2$.		
	\end{enumerate}
\end{exams}
\begin{them}\label{14}
	Let $\mathbb{K}=\mathbb{Q}(\sqrt{n\epsilon_{0}\sqrt{2}})$ be a real cyclic quartic number field,   $n$ an odd square-free positive  integer  and  $\epsilon_{0}$ the fundamental unit of  $k=\mathbb{Q}(\sqrt{2})$.
	The rank $r_2(H)$ equals $2$  if,  and only if $n$  takes one of the following forms.
	\begin{enumerate}[\rm1.]
		\item $n= p$ with $(\frac{2}{p})=1$ and $ \left( \frac{2}{p}\right)_{4} =\left( \frac{p}{2}\right)_{4} $
		\item $n= p_1p_2$ and:
		\begin{enumerate}[\rm a.]
			\item either $(\frac{2}{p_1})=(\frac{2}{p_2})=-1$
			\item or  $(\frac{2}{p_i})=- (\frac{2}{p_j})=-1 $ and $ \left( \frac{2}{p_j}\right)_{4} \neq\left( \frac{p_j}{2}\right)_{4}$ for $i\neq j$ in $\{1, 2\}$.
		\end{enumerate}
		\item $n= q_1q_2$ with $(\frac{2}{q_1})=(\frac{2}{q_2})=1$.
		\item $n= q_1q_2q_3$ and at most one of the  symbols $(\frac{2}{q_1})$,  $(\frac{2}{q_2})$,  $(\frac{2}{q_3})$ equals $1$.
		\item $n=p_1p_2q$ and   $(\frac{2}{p_1})=(\frac{2}{p_2})=-1$.
		\item $n=pq$ and $(\frac{2}{p})=1$.
		\item $n=pq_1q_2$ with $(\frac{2}{p})=-1$ and $(\frac{2}{q_1})=-1$ or  $(\frac{2}{q_2})=-1$
	\end{enumerate}
\end{them}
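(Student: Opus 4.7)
The plan is to derive this classification as a direct corollary of the explicit rank formulas proven in Section \ref{3} for the base field $k=\mathbb{Q}(\sqrt{2})$. In each of the regimes treated there, $r_2(H)$ is expressed as an affine integer combination of the counts $t$, $s$, $t_1$, $t_2$, $h$, $s_1$, $s_2$ of the various types of prime factors of $n$; fixing the value of this expression at $2$ and solving over small nonnegative integers will give a finite list, which I expect to match exactly the seven items of the statement.

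First I would partition the possible forms of $n$ into four mutually exclusive regimes: $(i)$ $n=1$, which is excluded since $r_2(H)=0$; $(ii)$ $n$ a product of primes all $\equiv 1\pmod 4$, where $r_2(H)\in\{t,\,2t,\,2t-1,\,t_1+2t_2,\,t_1+2t_2-1\}$; $(iii)$ $n$ a product of primes all $\equiv 3\pmod 4$, where $r_2(H)\in\{t-1,\,2t-2,\,t_1+2t_2-2\}$; and $(iv)$ $n$ containing primes of both residue classes, where $r_2(H)\in\{h+s-1,\,h+2s-2,\,h+s_1+2s_2-2\}$, the parameter $h$ being the number of prime ideals of $k$ above the $p_i$'s. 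For each such expression I would solve the equation ``$=2$'' in the bounded integer parameters and translate every admissible vector back into a description of $n$ with its symbol conditions.

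A short enumeration then produces exactly the following solutions: $t=1$ with $r_2(H)=2t$ gives item $1$; $t=2$ with $r_2(H)=t$, and $(t_1,t_2)=(1,1)$ with $r_2(H)=t_1+2t_2-1$, give items $2(a)$ and $2(b)$; $t=2$ with $r_2(H)=2t-2$ gives item $3$; $t=3$ with $r_2(H)=t-1$, together with $(t_1,t_2)=(2,1)$ in the formula $r_2(H)=t_1+2t_2-2$, give item $4$; $(t,s)=(2,1)$ in the mixed-residue regime, where $(2/p_1)=(2/p_2)=-1$ forces $h=2$, gives item $5$ via $h+s-1=2$ and via $h+2s-2=2$ according to the value of $(2/q)$; $(t,s)=(1,1)$ with $(2/p)=1$ (forcing $h=2$) gives item $6$; and $(t,s)=(1,2)$ with $(2/p)=-1$ (forcing $h=1$) together with the formulas $h+s-1=2$ and $h+s_1+2s_2-2=2$ give item $7$.

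The argument contains no new arithmetic; the difficulty is entirely bookkeeping. The main point requiring care is the correct evaluation of the parameter $h$ in the mixed-residue regime $(iv)$: one must track, prime by prime, whether $(2/p_i)=1$ (contributing two prime ideals of $k$) or $(2/p_i)=-1$ (contributing one), and then verify that no configuration outside the seven listed items accidentally yields $r_2(H)=2$. Observing that several of the formulas, such as $2t-1=2$ or $h+s-1=2$ with $h=2t$ and $(2/p_i)=1$ for all $i$, either have no nonnegative integer solution or force parameter values already covered by another item, completes the list and finishes the proof.
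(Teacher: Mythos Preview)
Your proposal is correct and follows exactly the approach the paper intends: the paper states at the start of \S\ref{4} that Theorems \ref{16}--\ref{15} ``are simple deductions from the results of previous subsections'' and gives no further proof beyond numerical examples, so your explicit case-by-case solving of the rank formulas of \S\ref{3} for the value $2$ is precisely the omitted argument. The bookkeeping you outline is accurate, including the handling of the parameter $h$ in the mixed-residue regime and the verification that no extraneous configurations occur.
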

\begin{exams}~\
	\begin{enumerate}[\rm1.]
		\item For $n= p=881\equiv1\pmod4$,\,  $ \left( \frac{2}{p}\right)_{4} =\left( \frac{p}{2}\right)_{4}=1 $ and $H$ is  of type $ (4,4) $.	
		\item For $n=p_1p_2=877.997\equiv1\pmod4$, \, $(\frac{2}{p_1})=(\frac{2}{p_2})=-1$ and $H$ is  of type $ (2,2) $.	
		For $n=p_1p_2=941.977\equiv1\pmod4$, we have $(\frac{2}{p_1})=-(\frac{2}{p_2})=-1$ and $ \left( \frac{2}{p_2}\right)_{4} =-\left( \frac{p_2}{2}\right)_{4}=-1 $ , $H$ is of type $(2,2) $.
		\item For $n=q_1q_2=47.79\equiv1\pmod4$, we have $(\frac{2}{q_1})=(\frac{2}{q_2})=1$ and $H$ is  of type $ (4,4) $.
		\item For $n=q_1q_2q_3=67.83.43\equiv3\pmod4$, we have $(\frac{2}{q_1})=(\frac{2}{q_2})=(\frac{2}{q_3})=-1$ and $H$ is of type $ (2,2) $. For $n=q_1q_2q_3=67.83.47\equiv3\pmod4$, we have $(\frac{2}{q_1})=(\frac{2}{q_2})=-(\frac{2}{q_3})=-1$ and $H$ is of type $ (2,2) $.
		\item For $n=p_1p_2q=53.61.83\equiv3\pmod4$, we have $(\frac{2}{p_1})=(\frac{2}{p_2})=(\frac{2}{q_3})=-1$ and $H$ is of type $ (2,2) $.
		For $n=p_1p_2q_3=53.61.71\equiv3\pmod4$, we have $(\frac{2}{p_1})=(\frac{2}{p_2})=-(\frac{2}{q_3})=-1$ and $H$ is of type $ (2,2) $.
		\item For $n=pq=73.83\equiv3\pmod4$, \, $(\frac{2}{p})=-(\frac{2}{q})=1$ and $H$ is  of type $ (2,4) $.
		For $n=pq=73.79\equiv3\pmod4$, \, $(\frac{2}{p})=(\frac{2}{q})=1$ and $H$ is  of type $ (2,2) $.
		\item For $n=pq_1q_2=61.43.67\equiv1\pmod4$, we have $(\frac{2}{p})=(\frac{2}{q_1})=(\frac{2}{q_2})=-1$ and $H$ is of type $ (2,2) $.
		For $n=pq_1q_2=61.59.71\equiv1\pmod4$, we have $(\frac{2}{p})=(\frac{2}{q_1})=-(\frac{2}{q_2})=-1$ and $H$ is bicyclic and of type $ (2,2) $.
	\end{enumerate}
\end{exams}
\begin{them}\label{15}
	Let $\mathbb{K}=\mathbb{Q}(\sqrt{n\epsilon_{0}\sqrt{2}})$ be a real cyclic quartic number field,  $n$ an odd square-free positive  integer  and  $\epsilon_{0}$ the fundamental unit of  $k=\mathbb{Q}(\sqrt{2})$. 	The rank $r_2(H)$ equals $3$  if,  and only if $n$  takes one of the following forms.
	\begin{enumerate}[\rm1.]
		\item $n= p_1p_2$ and
        \begin{enumerate}[\rm a.]
        	\item either $(\frac{2}{p_1})=(\frac{2}{p_2})=1$ and $ \left( \frac{2}{p_{i}}\right)_{4} \neq \left( \frac{p_{i}}{2}\right)_{4} $ for at least one	$i\in\{1, 2\}$   .
        	\item or $(\frac{2}{p_i})=-(\frac{2}{p_j})=-1$ and $ \left( \frac{2}{p_{j}}\right)_{4} =\left( \frac{p_{j}}{2}\right)_{4} $ for $i\neq j\in\{1, 2\}$.
        \end{enumerate}
		\item $n= p_1p_2p_3$ and
		\begin{enumerate}[\rm a.]
			\item either $(\frac{2}{p_i})=-1$ for all $i\in\{1,  2,  3\}$.
			\item  or $(\frac{2}{p_i})=(\frac{2}{p_j})=-(\frac{2}{p_k})=-1$ and $ \left( \frac{2}{p_{k}}\right)_{4}\neq \left( \frac{p_{k}}{2}\right)_{4} $ for  $i$, $j$ and $k$ different two by two in $\{1, 2, 3\}$.
		\end{enumerate}
		\item $n= q_1q_2q_3$ and only one of the symbols $(\frac{2}{q_i})$,  $i\in\{1,  2,  3\}$,   equals $-1$.
		\item $n= q_1q_2q_3q_4$ and at most one of the symbols $(\frac{2}{q_i})$,  $i\in\{1,  2,  3, 4\}$,  is $1$.
		\item $n=p_1p_2p_3q$ and   $(\frac{2}{p_i})=-1$ for all $i\in\{1,  2,  3\}$.
		\item $n= pq_1q_2q_3$ with $(\frac{2}{p})=-1$ and  at most one of the symbols  $(\frac{2}{q_i})$, $i=1, 2, 3$, equals $1$.
		\item $n= p_1p_2q_1q_2$ with $(\frac{2}{p_i})=-1$ for all $i\in\{1,  2\}$ and  $((\frac{2}{q_1})$ or $(\frac{2}{q_2})=-1)$.
		\item $n= pq_1q_2$ and:
		\begin{enumerate}[\rm a.]
			\item either   $(\frac{2}{p })=1$ and $(\frac{2}{q_1})=-1$ or  $(\frac{2}{q_2})=-1 $  .
			\item or  $(\frac{2}{p })=-1$ and $(\frac{2}{q_i})=1$ for all $i\in\{1, 2\}$.
		\end{enumerate}
	  \item $n=p_1p_2q$ with $(\frac{2}{p_1})\neq(\frac{2}{p_2})$.
	\end{enumerate}
\end{them}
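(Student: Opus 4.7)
The plan is to enumerate every admissible shape of the odd squarefree integer $n$ coprime to $2$, substitute into the rank formula already established in the corresponding subsection of \S\ref{3}, and solve the resulting Diophantine equation $r_2(H)=3$. Writing $n=\prod_{i=1}^{t}p_i\,\prod_{j=1}^{s}q_j$ with $p_i\equiv 1\pmod 4$ and $q_j\equiv 3\pmod 4$, the rank $r_2(H)$ depends on $(t,s)$, on the quadratic characters $(2/p_i)$ and $(2/q_j)$, and, in the boundary situations where all characters of one sort equal $+1$, also on the quartic symbols $(2/p_i)_{4}$ versus $(p_i/2)_{4}$. Theorems \ref{12}, \ref{13} and \ref{14} already filter out the shapes that yield $r_2(H)\in\{0,1,2\}$, so the task reduces to picking out the remaining shapes that land on exactly $3$.

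I would then run through the three branches separately. For $n=\prod_{i=1}^{t}p_i$ with all $p_i\equiv 1\pmod 4$, Theorem \ref{7} gives rank $t_1+2t_2$ (or $t_1+2t_2-1$ when a quartic-symbol condition fails), where $t_1$, $t_2$ count the primes with $(2/p_i)=-1$, $+1$ respectively; setting this equal to $3$ forces $(t_1,t_2)\in\{(3,0),(1,1)\}$ for the first formula and $(t_1,t_2)\in\{(2,1),(0,2)\}$ for the second, which upon translating back to $n$ reproduces items (1) and (2). For $n=\prod_{i=1}^{t}q_i$ with all $q_i\equiv 3\pmod 4$, the rank formula (again from \S\ref{3}) is $t-1$ for pure $(2/\cdot)=-1$, $2t-2$ for pure $(2/\cdot)=+1$, and $t_1+2t_2-2$ in the mixed sub-case; the equation $=3$ then selects $t=4$ in the pure $-1$ case and $(t_1,t_2)\in\{(3,1),(1,2)\}$ in the mixed case (the pure $+1$ case and $(t_1,t_2)=(5,0)$ giving no solution), producing items (3) and (4).

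The bulk of the argument lies in the mixed-type branch $n=\prod p_i\prod q_j$ with $t,s\geq 1$. I would use the formula $r_2(H)=h+s_1+2s_2-2$ from the corresponding theorem, together with its two pure sub-cases $h+s-1$ and $h+2s-2$, where $h=t_1+2t_2$ counts the prime ideals of $k=\mathbb{Q}(\sqrt{2})$ above the $p_i$'s, $s_1$, $s_2$ partition the $q_j$'s by the sign of $(2/q_j)$, and $t_1$, $t_2$ partition the $p_i$'s analogously. Setting each rank expression to $3$ reduces to enumerating nonnegative integer solutions of $h+s=4$, $h+2s=5$ and $h+s_1+2s_2=5$ under the positivity constraints $t_1+t_2\geq 1$, $s_1+s_2\geq 1$ (and in the mixed sub-case also $s_1,s_2\geq 1$). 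Grouping the resulting tuples by the shape of $n$ they describe yields items (5)--(9). The main obstacle is bookkeeping: several different $(h,s_1,s_2)$ tuples collapse onto the same shape of $n$ (for instance, items (5) and (9) each bundle two tuples corresponding to the two signs of $(2/q)$), and the pure sub-cases meet the mixed sub-case along $s_2=0$ or $s_1=0$ with different values of $r^*$, so care is needed to avoid double-counting while still confirming completeness.
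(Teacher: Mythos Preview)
Your proposal is correct and is exactly the approach the paper intends: the paper states that Theorems~\ref{12}--\ref{15} ``are simple deductions from the results of previous subsections'' and gives no further proof beyond numerical examples, so the enumeration you outline (plugging the rank formulas of \S\ref{3} into $r_2(H)=3$ and sorting the resulting integer solutions by the shape of $n$) is precisely what is expected. Your handling of the boundary where $r^*$ jumps from $0$ to $1$ (the pure $(2/q_j)=-1$ sub-case versus the general mixed formula) is the one genuine subtlety, and you have identified it correctly.
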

\begin{exams}~\
	\begin{enumerate}[\rm1.]
		\item For $n=p_1p_2=769.977\equiv1\pmod4$, we have $(\frac{2}{p_1})=(\frac{2}{p_2})=1$, $ \left( \frac{2}{p_{1}}\right)_{4} =- \left( \frac{p_{1}}{2}\right)_{4}=-1 $  and $ \left( \frac{2}{p_{2}}\right)_{4} =- \left( \frac{p_{2}}{2}\right)_{4}=-1 $ $H$ is of type $(2,2,4)$.
		For $n=p_1p_2=797.953\equiv1\pmod4$, we have $(\frac{2}{p_1})=-(\frac{2}{p_2})=-1$  and $ \left( \frac{2}{p_{2}}\right)_{4} =\left( \frac{p_{2}}{2}\right)_{4}=-1 $, $H$ is of type $(2,4,4)$.
		\item	For $n=p_1p_2p_3=37.53.61\equiv1\pmod4$, we have $(\frac{2}{p_i})=-1$ for all $i\in\{1,  2,  3\}$ and $H$ is of type $(2,2,2)$.
		For $n=p_1p_2p_3=53.61.89\equiv1\pmod4$, we have $(\frac{2}{p_1})=(\frac{2}{p_2})=-(\frac{2}{p_3})=-1$ and $ \left( \frac{2}{p_{3}}\right)_{4}=- \left( \frac{p_{3}}{2}\right)_{4}=-1 $, $H$ is of type $(2,2,2)$.
		\item For $n=q_1q_2q_3=71.79.67\equiv3\pmod4$, we have $(\frac{2}{q_1})=(\frac{2}{q_2})=-(\frac{2}{q_3})=1$  and $H$ is of type $(2,2,4)$.
		\item For $n=q_1q_2q_3q_4=59.67.83.43\equiv1\pmod4$, we have $(\frac{2}{q_1})=(\frac{2}{q_2})=(\frac{2}{q_3})=(\frac{2}{q_4})=-1$  and $H$ is of type $(2,2,2)$.
		For $n=q_1q_2q_3q_4=59.67.83.79\equiv1\pmod4$, we have $(\frac{2}{q_1})=(\frac{2}{q_2})=(\frac{2}{q_3})=-(\frac{2}{q_4})=-1$  and $H$ is of type $(2,2,2)$.
		\item For $n=p_1p_2p_3q=37.53.61.67\equiv3\pmod4$, we have $(\frac{2}{p_1})=(\frac{2}{p_2})=(\frac{2}{p_3})=(\frac{2}{q})=-1$  and $H$ is of type $(2,2,2)$.
		For $n=p_1p_2p_3q=37.53.61.71\equiv3\pmod4$, we have $(\frac{2}{p_1})=(\frac{2}{p_2})=(\frac{2}{p_3})=-(\frac{2}{q})=-1$  and $H$ is of type $(2,2,2)$.
		\item For $n=pq_1q_2q_3=61.67.83.59\equiv3\pmod4$, we have $(\frac{2}{p})=(\frac{2}{q_1})=(\frac{2}{q_2})=(\frac{2}{q_3})=-1$  and $H$ is of type $(2,2,2)$.
		For $n=pq_1q_2q_3=61.67.83.71\equiv3\pmod4$, we have $(\frac{2}{p})=(\frac{2}{q_1})=(\frac{2}{q_2})=-(\frac{2}{q_3})=-1$  and $H$ is of type $(2,2,2)$.
		\item For $n=p_1p_2q_1q_2=53.61.83.67\equiv1\pmod4$, we have $(\frac{2}{p_1})=(\frac{2}{p_2})=(\frac{2}{q_1})=(\frac{2}{q_2})=-1$  and $H$ is of type $(2,2,2)$.
		For $n=p_1p_2q_1q_2=53.61.83.79\equiv1\pmod4$, we have $(\frac{2}{p_1})=(\frac{2}{p_2})=(\frac{2}{q_1})=-(\frac{2}{q_2})=-1$  and $H$ is of type $(2,2,2)$.
		\item For $n=pq_1q_2=97.79.83\equiv1\pmod4$, we have $(\frac{2}{p})=(\frac{2}{q_1})=-(\frac{2}{q_2})=1$  and $H$ is of type $(2,2,2)$.
		For $n=pq_1q_2=61.47.71\equiv1\pmod4$, we have $(\frac{2}{p})=-(\frac{2}{q_1})=-(\frac{2}{q_2})=-1$  and $H$ is of type $(2,2,4)$.	
		\item For $n=p_1p_2q=61.73.83\equiv3\pmod4$, we have $(\frac{2}{p_1})=-(\frac{2}{p_2})=(\frac{2}{q})=-1$  and $H$ is of type $(2,2,4)$.	
	\end{enumerate}
\end{exams}
\small

\end{document}